\renewcommand\paragraph{\@startsection{paragraph}{4}{0pt}{\baselineskip}{0.5pt}{\slshape} }
\renewcommand{\@seccntformat}[1]{{\csname the#1\endcsname.} }
\newcommand{\C}{\mathbb{C}}
\newcommand{\R}{\mathbb{R}}
\newcommand{\Ss}{\mathbb{S}}
\newcommand{\N}{\mathbb{N}}
\newcommand{\Z}{\mathbb{Z}}
\newcommand{\Hh}{\mathbb{H}}
\newcommand{\J}{\mathbb{J}}
\newcommand{\U}{\mathbb{U}}
\DeclareMathOperator{\Jac}{Jac}
\DeclareMathOperator{\Tr}{Tr}
\DeclareMathOperator{\argsh}{argsh}
\DeclareMathOperator{\vol}{vol}
\DeclareMathOperator{\Res}{Res}
\newtheorem{theorem}{Theorem}
\newtheorem*{theorem*}{Theorem}
\newtheorem*{theoreme*}{Théorème}
\newtheorem{proposition}{Proposition}[section]
\newtheorem{definition}[proposition]{Definition}
\newtheorem{definition-f}[proposition]{Définition}
\newtheorem{lemma}[proposition]{Lemma}
\newtheorem{remark}{Remark}
\newtheorem*{remark*}{Remark}
\newtheorem{conjecture}{Conjecture}
\title[Resonance-free regions for some finite volume manifolds]{Resonance-free regions for negatively curved manifolds with cusps}
\author{Yannick Bonthonneau}
\email{yannick.bonthonneau@ens.fr}
\address{DMA, U.M.R. 8553 CNRS, \'Ecole Normale Superieure, 45 rue d'Ulm,
75230 Paris cedex 05, France}
\begin{document}

\begin{abstract}
The Laplace-Beltrami operator on cusp manifolds has continuous spectrum. The resonances are complex numbers that replace the discrete spectrum of the compact case. They are the poles of a meromorphic function $\varphi(s)$, $s\in \C$, the \emph{scattering determinant}. We construct a semi-classical parametrix for this function in a half plane of $\C$ when the curvature of the manifold is negative. We deduce that for manifolds with one cusp, there is a zone without resonances at high frequency. This is true more generally for manifolds with several cusps and generic metrics.

We also study some exceptional examples with almost explicit sequences of resonances away from the spectrum.
\end{abstract}

\keywords{Finite volume manifolds with cusps, scattering determinant, resonances, semi-classical parametrix.}
\maketitle

The object of our study are complete $d+1$-dimensional negatively curved manifolds of finite volume $(M,g)$ with a finite number $\kappa$ of real hyperbolic cusp ends. The Laplace operator is denoted $\Delta$ in the analyst's convention that $-\Delta \geq 0$. The resolvent $R(s) = (-\Delta - s(d-s))^{-1}$ is a priori defined on $L^2(M)$ for $\Re s > d/2$. Thanks to the analytic structure at infinity, one shows that $R$ can be analytically continued to $\C$ as a meromorphic family of operators $C^\infty_c \to C^\infty$ whose set of poles is called the \emph{resonant set} $\Res(M)$. The original proof is due to Selberg in constant $-1$ curvature, to Lax and Phillips \cite{Lax-Phillips-Automorphic-76} for surfaces, and this subject was studied by both Yves Colin de Verdi\`ere \cite{CdV-81, CdV-83} and Werner M\"uller \cite{Muller-83, Muller-86, Muller-92}. It fits in the general theory of spectral analysis on geometrically finite manifolds with constant curvature ends, see \cite{Mazzeo-Melrose-87, Guillope-Zworski}. 

The spectrum of $-\Delta$ divides into both discrete $L^2$ spectrum, that may be finite, infinite or reduced to $\{0\}$, and continuous spectrum $[d^2/4, + \infty)$. We can find a precise description of the structure of its spectral decomposition given by the Spectral Theorem in \cite{Muller-83}. For each cusp $Z_i$, $i=1 \dots \kappa$, there is a meromorphic family of \emph{Eisenstein functions} $\{E_i(s)\}_{s\in \C}$ on $M$ such that
\begin{equation}\label{eq:functional_equation_Eisenstein_eigen}
-\Delta E_i(s) = s(d-s)E_i(s).
\end{equation}
The poles of the family are contained in $\{\Re s < d/2\}\cup (d/2, d]$, and are called \emph{resonances}. We also consider the vector $E=(E_1, \dots, E_\kappa)$. Let $\{u_\ell\}_\ell$ be the discrete $L^2$ eigenvalues. Then, any $f\in C^\infty_c(M)$ expands as:
\begin{equation*}
f= \sum_\ell \langle u_\ell, f\rangle u_\ell + \frac{1}{4\pi} \sum_i \int_{-\infty}^{+\infty} E_i\left(\frac{d}{2} + i t \right)\left\langle E_i\left(\frac{d}{2} + i t\right), f \right\rangle dt  \ \text{  \cite[eq. 7.36]{Muller-83},}
\end{equation*}
where $\langle \cdot, \cdot \rangle$ is the $L^2$ duality product. An important feature of the Eisenstein functions is the following: in cusp $Z_j$, the zeroth Fourier mode in $\theta$ of $E_i$ writes as
\begin{equation}\label{eq:functional_equation_Eisenstein_Fourier}
\delta_{ij}y^s + \phi_{ij}(s)y^{d-s}.
\end{equation}
where $\phi_{ij}$ is a meromorphic function. If we take the determinant of the \emph{scattering matrix} $\phi=\{\phi_{ij}\}$, we obtain the \emph{scattering determinant} $\varphi(s)$. It is known that the set $\mathcal{R}$\index{$\mathcal{R}$} of poles of $\varphi$ is the same as that of $\{E(s)\}_{s}$ --- again, see \cite[theorem 7.24]{Muller-83}. It also coincides with the poles of the analytic continuation of the kernel of the resolvent of the Laplacian, \cite{Muller-83}.

Thanks to the symmetries of the problem, $\varphi(s) \varphi(d-s)=1$, so that studying the poles of $\varphi$ in $\{ \Re s < d/2\}$ is equivalent to studying the zeroes in $\{ \Re s > d/2 \}$. In this article, we will be giving information on the zeroes of $\varphi$, keeping in mind that the really important objects are the poles.

The first examples of cusp manifolds to be studied had constant curvature, and were arithmetic quotients of the hyperbolic plane. Let $\Gamma_0(N)$ be the congruence subgroup of order $N$, that is, the kernel of the morphism $\pi: SL_2(\Z) \to SL_2(\Z_N)$. Then, $\Hh / \Gamma_0(N)$ is a \emph{cusp surface}. For such examples, and more generally, for constant curvature cusp surfaces $\Hh / \Gamma$, if cusp $Z_i$ is associated with the point $\infty$ in the half plane model, then the associated Eisenstein functions can be written as a series
\begin{equation}\label{eq:Expansion_Eisenstein_constant_curvature}
E_i(s)(z) = \sum_{[\gamma] \in\Gamma_{i}\backslash \Gamma} \left[\Im(\gamma z)\right]^s
\end{equation}
where $\Gamma_i$ is the maximal parabolic subgroup of $\Gamma$ associated with $Z_i$. Recall a Dirichlet series is a function of the form
\begin{equation*}
f(s) = \sum_{k\geq 0} \frac{a_k}{\lambda_k^s}, \text{ where $(\lambda_k)$ is a strictly increasing sequence of real numbers.}
\end{equation*}
Selberg proved --- see \cite{Selberg-2} --- that there is a non-zero Dirichlet series $L$ converging absolutely for $\{\Re s >d\}$ so that 
\begin{equation}\label{eq:Selberg_Dirichlet_series}
\varphi(s) = \left(\frac{\pi \Gamma(s-1/2)}{\Gamma(s)}\right)^{\kappa/2} L(s)
\end{equation}
This implies:
\begin{theorem*}[Selberg]
For constant curvature cusp surfaces, the resonances are contained in a vertical strip of the form $\{ 1/2 - \delta \leq \Re s \leq 1/2\}$, where $\delta >0$ (with maybe the exception of a finite number of resonances in $(1/2,1]$).
\end{theorem*}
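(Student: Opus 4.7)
The plan is to combine the Selberg factorization \eqref{eq:Selberg_Dirichlet_series} with the functional equation $\varphi(s)\varphi(1-s)=1$ (here $d=1$ for surfaces). The key observation is that a zero-free half-plane $\{\Re s\geq\sigma_1\}$ for $\varphi$ translates, under $s\mapsto 1-s$, into a pole-free (i.e.\ resonance-free) half-plane $\{\Re s\leq 1-\sigma_1\}$. Producing such a $\sigma_1$ and then separately bounding the few poles of $\varphi$ in $\{\Re s>1/2\}$ will give both halves of the theorem.

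For the zero-free half-plane I would first handle $L$ directly. Writing $L(s)=\sum_{k\geq 0}a_k\lambda_k^{-s}$ with $a_0\neq 0$ (since $L\not\equiv 0$) and $0<\lambda_0<\lambda_1<\cdots$, factor out the leading term as $L(s)=a_0\lambda_0^{-s}(1+R(s))$, where $R(s)=a_0^{-1}\sum_{k\geq 1}a_k(\lambda_0/\lambda_k)^s$. Each ratio $\lambda_0/\lambda_k$ lies in $(0,1)$ and $\sum_k|a_k|\lambda_k^{-\sigma}$ converges for $\sigma>1$, so dominated convergence gives $|R(s)|\to 0$ as $\Re s\to+\infty$. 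Picking $\sigma_1>1/2$ with $|R(s)|<1/2$ throughout $\{\Re s\geq\sigma_1\}$ makes $L$ nonvanishing there. The Gamma factor $\Gamma(s-1/2)/\Gamma(s)$ is holomorphic and nonvanishing on $\{\Re s>1/2\}$ --- $\Gamma(s-1/2)$ has only a simple pole at $s=1/2$ in this region and $\Gamma$ has no zeros --- and the same holds for any fixed branch of its $\kappa/2$-th power. Combining with \eqref{eq:Selberg_Dirichlet_series}, $\varphi$ is zero-free on $\{\Re s\geq\sigma_1\}$, so by the functional equation every pole of $\varphi$ satisfies $\Re s\geq 1-\sigma_1 = 1/2-\delta$ with $\delta:=\sigma_1-1/2>0$.

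The remaining exceptional poles in $\{\Re s>1/2\}$ I would handle spectrally. By the identification of $\mathcal{R}$ with the poles of $E(s)$ \cite[theorem 7.24]{Muller-83}, a pole of $\varphi$ at $s_0$ with $\Re s_0>1/2$ produces, via the residue of $E_i$, an $L^2$-eigenfunction of $-\Delta$ with eigenvalue $s_0(1-s_0)$. Self-adjointness forces $s_0(1-s_0)\in\R$, which in the half-plane $\{\Re s>1/2\}$ means $s_0\in\R$, hence $s_0\in(1/2,1]$; and since the hyperbolic Laplacian on a finite-volume surface has only finitely many eigenvalues below $1/4$, there are only finitely many such resonances.

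The only real conceptual obstacle is the factorization \eqref{eq:Selberg_Dirichlet_series} itself, which is the core of Selberg's work and is taken as input here; everything built on top of it is elementary complex analysis. One small technicality is that for odd $\kappa$ the $\kappa/2$-th power of the Gamma factor requires a branch choice, but this does not affect the zero/pole analysis, so the argument goes through unchanged.
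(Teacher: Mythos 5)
The paper does not actually spell out a proof of this statement --- it simply asserts that the factorization \eqref{eq:Selberg_Dirichlet_series} ``implies'' the theorem --- and your argument is exactly the standard way to fill in that implication (leading-term dominance in the Dirichlet series gives a zero-free right half-plane, the Gamma quotient is zero- and pole-free for $\Re s > 1/2$, the functional equation $\varphi(s)\varphi(1-s)=1$ reflects this into a pole-free left half-plane, and self-adjointness confines the remaining poles to a finite set in $(1/2,1]$). One minor slip worth flagging: you write ``picking $\sigma_1 > 1/2$'', but since the paper only guarantees that $L$ converges absolutely for $\Re s > d = 1$, you in fact need $\sigma_1 > 1$ for the dominated-convergence estimate on $R(s)$ to make sense; this only changes the eventual value of $\delta$ and not the structure of the argument.
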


M\"uller was actually the one who asked if Selberg's theorem still holds in variable curvature. Froese and Zworski \cite{Froese-Zworski} gave a counter-example, that had positive curvature. The following theorem gives a partial answer in negative curvature.
\begin{theorem}\label{theorem:main}
For $M$ a cusp manifold, let $\mathcal{G}(M)$ be the set of $C^\infty$ metrics $g$ on $M$ such that $(M,g)$ is a cusp manifold with negative sectional curvature. If $U \subset \subset M$ is open, let $\mathcal{G}_U(M)$ be the set of metrics in $\mathcal{G}(M)$ that have constant curvature outside of $U$. Endow $\mathcal{G}(M)$ and $\mathcal{G}_U(M)$ with the $C^2$ topology on metrics. Then
\begin{enumerate}[	\begin{upshape}(I)\end{upshape}  ]
	\item There are hyperbolic cusp surfaces $M$ and non-empty open sets $U \subset \subset M$ such that for all $g\in \mathcal{G}_U(M)$, $\Res(M,g)$ is still contained in a vertical strip.
	\item Given any cusp manifold $M$, for an open and dense set of $g \in\mathcal{G}(M)$, or all of $\mathcal{G}(M)$ when there is only one cusp, there is a $\delta > d/2$ such that for any constant $C>0$, 
\begin{equation*}
\left\{ s \in \Res(M,g),\ \Re s < d - \delta ,\ \Re s > -C \log | \Im s| \right\} \text{ is finite.}	
\end{equation*}
	\item \label{metrics-two-cusp-exceptional} There is a 2-cusped surface $(M,g)$ with the following properties. The resonant set $\Res(M,g)$ is the union of $\Res_{strip}$, $\Res_{far}$ and an exceptional set $\Res_{exc}$, so that
\begin{align*}
\Res_{strip} &= \left\{s \in \Res(M,g),\ \Re s > d - \delta \right\} \\
\Res_{exc} &= \left\{ s_i, \overline{s_i},\ i\in \N,\ s_i = \tilde{s}_i + \mathcal{O}(1) \right\} \\
\Res_{far} & \cap \left\{ \Re s > - C \log | \Im s| \right\} \text{ is finite for any $C>0$}.
\end{align*}
where $\delta > 0$, and the $\tilde{s}_i$'s and $\overline{\tilde{s}_i}$'s are the zeroes of $s e^{- sT} - C_0$ for some constants $T > 0$ and $C_0 \neq 0$.
	\item \label{metrics-with-non-vanishing-parametrix} For a bigger open and dense set of metrics $g\in \mathcal{G}(M)$, containing the example in \eqref{metrics-two-cusp-exceptional}, there are constants $\delta> 0$, and $C_0 >0$ such that
\begin{equation*}
\left\{ s \in \Res(M,g),\ \Re s < d - \delta,\ \Re s > - C_0 \log | \Im s| \right\} \text{ is finite.}	
\end{equation*}	
\end{enumerate}
\end{theorem}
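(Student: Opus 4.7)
The plan is to reduce everything to the analysis of the scalar meromorphic function $\varphi(s)$ and to build a semiclassical parametrix $\varphi^{\mathrm{par}}(s)$ that approximates $\varphi(s)$ in the logarithmic region $\{d-\delta<\Re s<d/2-\epsilon,\ -C\log|\Im s|<\Re s\}$ at high frequency $|\Im s|\to\infty$. Since $\mathcal{R}$ is exactly the set of poles of $\varphi$, and the functional equation $\varphi(s)\varphi(d-s)=1$ sends resonances with $\Re s < d/2$ to zeros with $\Re s > d/2$, it suffices to show $|\varphi^{\mathrm{par}}(s)|\geq c>0$ in the reflected region (and that the parametrix approximation is sharper than that lower bound) to conclude that $\mathcal{R}$ is finite there.

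To build the parametrix I would first use a Maass--Selberg / Green's-formula representation of $\phi_{ij}(s)$ as a compactly supported bilinear expression in the Eisenstein functions, reducing the problem to semiclassical control of $E_i(s)$ on the core $M_0$. In cusp $Z_i$, the model $y^s$ is, at frequency $|\Im s|$, a WKB state with phase $s\log y$, microsupported on the horocyclic Lagrangian $\Lambda_i$ whose image under the geodesic flow is a union of rays going into the core. Propagating by a cut-off resolvent produces a Lagrangian state whose underlying Lagrangian is iterated forward by the geodesic flow, and by the Anosov property each passage through the cusp region contributes a factor $A_\gamma(s)\,e^{-sT_\gamma}$, where $\gamma$ ranges over geodesic orbit segments joining $Z_j$ to $Z_i$, $T_\gamma$ is its length and $A_\gamma$ is a polynomial-bounded amplitude controlled by the unstable Jacobian along $\gamma$. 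This yields a Poincar\'e-series expansion $\phi^{\mathrm{par}}_{ij}(s)=\sum_\gamma A_\gamma(s)\,e^{-sT_\gamma}$ whose determinant is $\varphi^{\mathrm{par}}(s)$.

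From this representation the four statements follow. For (II) with one cusp, $\varphi^{\mathrm{par}}$ is a scalar series dominated by its shortest term, which is bounded below in the logarithmic region; for several cusps one instead needs $\det\phi^{\mathrm{par}}$ to be non-vanishing, and this fails only on a meagre set of metrics because an accidental cancellation would impose a real-analytic relation among finitely many geodesic lengths. For (III), I would take a surface with an involution exchanging the two cusps so that $\phi$ diagonalises and one eigenvalue becomes $se^{-sT}-C_0$ up to lower-order terms, the dominant contribution coming from a single isolated short geodesic joining the two cusps; Rouch\'e's theorem then shadows the explicit zeros $\tilde s_i$ of this leading expression by true zeros, which form $\Res_{exc}$, while $\Res_{far}$ consists of the zeros controlled by the other eigenvalue. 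Statement (IV) is a widening of (II) that still includes (III): the relevant open and dense set consists of metrics for which $|\det\phi^{\mathrm{par}}|$ is uniformly bounded below away from an explicit discrete zero set. Finally (I) is handled separately: on a well-chosen arithmetic hyperbolic surface the Dirichlet-series representation \eqref{eq:Selberg_Dirichlet_series} persists under perturbations supported in a small $U$ disjoint from the closed geodesics that control $L(s)$, and this is enough to keep the zeros in a vertical strip.

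The main obstacle is controlling the parametrix across the whole logarithmic region, which is well outside the usual semiclassical window $\Re s = d/2$. Propagating Lagrangian states for times of order $C\log|\Im s|$ forces a competition between the exponential number $\sim e^{hT}$ of orbits of length $\leq T$ (with $h$ the topological entropy) and their individual amplitudes $\sim e^{-\Re s\cdot T}$, so absolute convergence of the Poincar\'e series only holds for $\Re s > h$; this both dictates the admissible $\delta$ and explains why the method cannot reach below the entropy. A secondary difficulty is the genericity statement used in (II) and (IV): one has to rule out accidental cancellations between the finitely many orbits contributing at each frequency scale by a Baire-category / transversality argument on $\mathcal{G}(M)$, and this must be done uniformly enough in $\Im s$ to yield a single value of $\delta$ rather than one that degrades as $|\Im s|\to\infty$.
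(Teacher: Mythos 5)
Your overall strategy---parametrize $\varphi$ by a Dirichlet-series-type expansion over scattered geodesics, and locate resonance-free zones by comparing with the leading term via Rouch\'e---is the same as the paper's. But several of your technical claims are wrong or incomplete, and at least one of them is fatal to the argument as sketched.

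\textbf{Convergence abscissa.} You identify the threshold for convergence of the Poincar\'e-series parametrix with the topological entropy $h$, arguing that $e^{hT}$ orbits of length $\leq T$ compete with amplitudes $e^{-\Re s\cdot T}$. This is incorrect because the amplitudes along each orbit carry an additional factor $\exp\bigl(\int_\gamma V_0\bigr)$ governed by the unstable Jacobian, with $V_0=(F^{su}+d)/2$. The correct threshold is therefore the \emph{pressure} of $V_0$, which the paper denotes $\delta_g=\delta(\Gamma,V_0)$ and which in general differs from $h$; the proof that $\delta_g>d/2$ requires a Patterson--Sullivan argument on the parabolic subgroup rather than a pure orbit-counting estimate. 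Since the $\delta$ in the theorem statement is precisely $\delta_g$ (up to $\epsilon$), getting this exponent wrong changes the statement.

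\textbf{Part (I).} Your criterion for preserving the strip---that $U$ be disjoint from ``the closed geodesics that control $L(s)$''---is not the right one, and cannot be, because the parametrix is indexed by scattered geodesics and is valid only on a half-plane $\Re s>\delta_g$. The relevant condition is that $U$ be sufficiently far from all cusps, quantitatively $d(U,Z_i)+d(U,Z_j)\geq \mathcal{T}^0_{ij}+\log a_i+\log a_j$: this guarantees that the parametrix error decays at least as fast as the leading term $\lambda_0^{-\Re s}$ uniformly as $\Re s\to\infty$ (in the paper's notation, $\lambda_\#=\lambda_0$), whence the zeros of $\varphi$ sit in a true vertical strip rather than a logarithmic region. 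The mechanism is error control relative to the cusps, not avoidance of particular periodic orbits.

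\textbf{Genericity in (II) and (IV).} Saying that an accidental cancellation ``would impose a real-analytic relation among finitely many geodesic lengths'' is a heuristic, not a proof; it gives at best meagreness of the exceptional set, not openness and density, and even meagreness requires knowing the relation is not identically satisfied. The paper instead proves that the map $g\mapsto(\text{sojourn times},\text{stationary-phase amplitudes})$ of any finite collection of scattered geodesics is a submersion, by computing its differential in Fermi coordinates and applying the inverse function theorem. That transversality step is the actual content here and cannot be waved away.

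\textbf{Part (III).} Your involution idea is a reasonable starting point but is not by itself sufficient: with an involution swapping the cusps one has $\varphi=(\phi_{11}+\phi_{12})(\phi_{11}-\phi_{12})$, and for the leading Dirichlet coefficient of $\varphi$ to vanish one still needs the shortest sojourn times $\mathcal{T}^0_{11}$ and $\mathcal{T}^0_{12}$ to coincide with matching amplitudes. That coincidence is the whole point of the paper's explicit pentagon-gluing example; nothing in your sketch produces it. Moreover, the $se^{-sT}-C_0$ structure is obtained only after a careful Rouch\'e argument comparing the subleading terms of $L_0$ and $L_1$ (the paper's Lemma on $\tilde f=a^0_1\lambda_1^{-s}+a^1_0 s^{-1}\lambda_0^{-s}$), which you do not carry out.
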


\begin{figure}
\centering
\def\svgwidth{\linewidth}
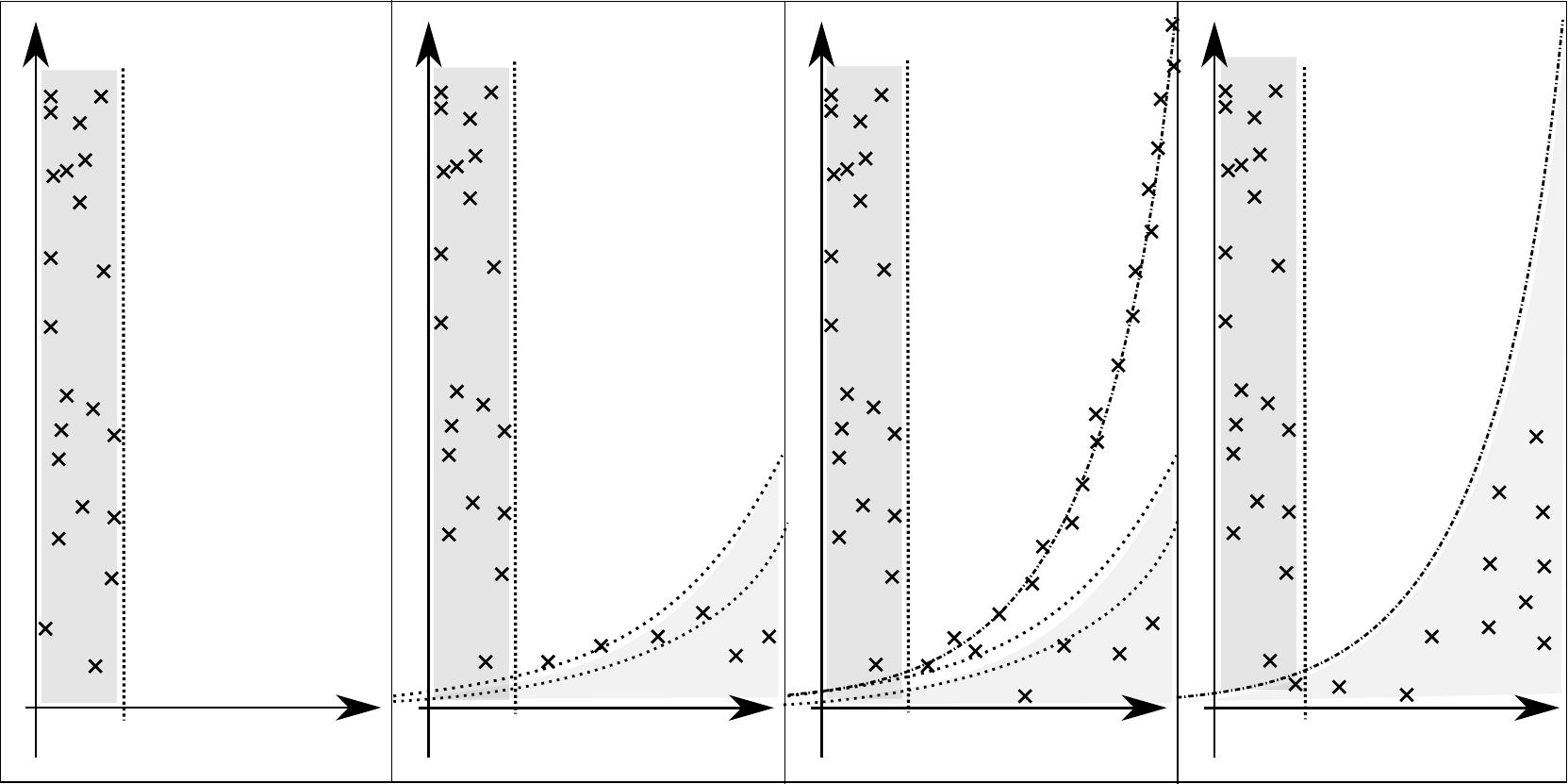
\caption{\label{fig:different cases}The zeroes of $\varphi$: 4 cases in theorem \ref{theorem:main}.}
\end{figure}

\begin{conjecture}\label{conj:one-log-zone}
The set of metrics in \eqref{metrics-with-non-vanishing-parametrix} is actually $\mathcal{G}(M)$.
\end{conjecture}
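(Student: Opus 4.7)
The conjecture asserts that the open and dense set of metrics in (IV) is in fact all of $\mathcal{G}(M)$. My plan is to strengthen the proof of (IV) by showing that the quantitative lower bound on the semi-classical parametrix $P(s,h)$ established there in fact persists for every smooth negatively curved metric.

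The strategy has three steps. First, starting from the parametrix $P(s,h)$ built in the body of the paper (with $h \sim 1/|\Im s|$), re-express it as a dynamical determinant $P(s,h) = \det\bigl(I - \mathcal{M}(s,h)\bigr)$, where $\mathcal{M}$ is a matrix whose entries are finite sums of the form $\sum_{\gamma} a_{ij,\gamma}(s,h)\, e^{-s \ell_\gamma}$ indexed by short geodesic arcs $\gamma$ running between cusps, with $a_{ij,\gamma}(\cdot, h)$ a classical symbol in $h$ and $\ell_\gamma$ the geometric length. Second, extract the leading-order semi-classical limit $P_0(s) = \lim_{h\to 0} P(s,h)$: this is a genuine exponential polynomial in $s$, and if it is non-zero, Polya's classical theorem confines its zero set to a vertical strip of bounded width depending only on the arc lengths and coefficients. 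A Rouch\'e argument in $h$ then transfers this to the required lower bound on $P(s,h)$ in the logarithmic region $\{\Re s > -C_0 \log|\Im s|\}$, once $\Re s$ is pushed below $d-\delta$ for some $\delta>0$ large enough to avoid the bounded strip containing the zeros of $P_0$. Third, treat the degenerate case where $P_0$ vanishes identically by descending to the next order in the semi-classical expansion, repeating the argument until a non-trivial exponential polynomial is reached.

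The main obstacle lies in making the third step rigorous and uniform in the metric. The open and dense condition in (IV) is precisely what guarantees that $P_0$ (or the first non-vanishing coefficient of the semi-classical expansion) is non-zero; to remove that condition one must exhibit an intrinsic non-degenerate leading term of $P(s,h)$ determined by fixed geometric data, and show that it cannot vanish for any $g \in \mathcal{G}(M)$. The example in (III) shows how close this can get to failure: there the parametrix essentially reduces to $se^{-sT} - C_0$, and the near-vertical line of resonances arises from actual zeros of the exponential polynomial that nevertheless remain in a bounded vertical strip. One hopes that a finer analysis of the dynamical determinant, exploiting hyperbolicity of the geodesic flow and the fact that only finitely many bi-cusp geodesic arcs lie below any given length, forces at least one coefficient of the exponential polynomial to be non-zero regardless of $g$. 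Establishing such a rigidity statement, as opposed to a perturbative genericity argument, is the crux of the conjecture and seems to demand tools beyond the parametrix construction used in (I)--(IV).
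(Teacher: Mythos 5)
The statement you are addressing is Conjecture~\ref{conj:one-log-zone}, which the paper explicitly leaves open; there is no proof in the paper to compare against. The paper even provides a reformulation (Conjecture 1'): given $g \in \mathcal{G}(M)$, at least one of the Dirichlet series $L_i$ appearing in Theorem~\ref{theorem:parametrix_varphi} is not identically zero. Your three-step strategy --- extract a leading exponential polynomial from the parametrix, locate its zeros via P\'olya and Rouch\'e, and descend to the next order whenever the current one vanishes --- is in essence this reformulation spelled out, and you correctly locate the crux: the open-and-dense argument of section~\ref{section:continuity-parametrix-metric} is perturbative and cannot force a specific coefficient to be non-zero for an \emph{arbitrary} $g$. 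Your step 3 does not terminate of its own accord; a priori all the $L_i$ could vanish simultaneously, and excluding that possibility is exactly the content of the conjecture. So the proposal rephrases the problem in the language of the parametrix but does not close it, which you candidly acknowledge.

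Two smaller remarks on framing. The paper does not introduce a semi-classical parameter $h$: the expansion of Theorem~\ref{theorem:parametrix_varphi} is $\varphi(s) = s^{-\kappa d/2}\bigl[\sum_n s^{-n} L_n(s) + \mathcal{O}(s^{-N-1}\lambda_\#^{-s})\bigr]$, an asymptotic series in $1/s$ as $|\Im s| \to \infty$ with Dirichlet-series coefficients built from sojourn times, not a dynamical determinant $\det(I - \mathcal{M}(s,h))$ of transfer-operator type. There is also no zeta-function or trace-formula structure available here that would convert the sum over bi-cusp arcs into a determinant whose non-vanishing could be read off from hyperbolicity alone; the relevant object is a genuine Dirichlet series, and the difficulty is whether cancellations among its coefficients (which are integrals of $V_0$ along scattered geodesics weighted by stationary-phase amplitudes) can be total. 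That is a rigidity question about the marked sojourn spectrum together with the $A_n$ coefficients, and the paper does not claim to resolve it.
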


\begin{conjecture}
For an open and dense set of $g\in \mathcal{G}(M)$, there is an infinite number of resonances outside of any strip $d/2 > \Re s > d - \delta$.
\end{conjecture}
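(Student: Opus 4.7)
The plan is to upgrade the exceptional construction of part \eqref{metrics-two-cusp-exceptional} into a generic phenomenon. In that example, the parametrix for $\varphi$ contains a factor of the form $s e^{-sT} - C_0$ whose zeros $\{\tilde s_i\}$ form an infinite sequence with $\Re \tilde s_i \sim -(\log |\Im \tilde s_i|)/T$. Such zeros already escape every vertical strip $d/2 > \Re s > d - \delta$, so by the functional equation $\varphi(s)\varphi(d-s)=1$ they yield infinitely many resonances with $\Re s < d - \delta$. The goal is to show that an analogous factor appears in the parametrix for an open and dense set of metrics.

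The first step is to analyze the semi-classical parametrix of $\varphi$ in general. Structurally one expects $\varphi(s) = \varphi_0(s) + R(s)$, where $\varphi_0$ is a finite (or absolutely convergent) sum of terms $c_j(s) e^{-s T_j}$ indexed by scattering geodesic segments both of whose ends lie in the cusps, plus a gamma factor analogous to the right-hand side of \eqref{eq:Selberg_Dirichlet_series}. I would isolate a dominant exponential, of length $T_{\min}$, and write $\varphi_0 = c_{\min}(s) e^{-s T_{\min}} - F(s)$, where $F$ collects the subdominant contributions.

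The second step is to show that for an open and dense set of $g \in \mathcal{G}(M)$, the coefficient $c_{\min}$ is not identically zero and the lengths $T_j$ satisfy a non-resonance condition sufficient to ensure that $\varphi_0$ has infinitely many zeros tracking a log-curve $\Re s \sim -(\log|\Im s|)/T_{\min}$. Openness would follow from the stability of the leading coefficients and lengths under small perturbations of $g$; density should be obtainable by a local perturbation of the metric near a single scattering geodesic, in the spirit of the transversality arguments that go into parts (II) and (IV) of theorem \ref{theorem:main}. Once infinitely many zeros of $\varphi_0$ are in hand, each must be promoted to a zero of $\varphi$ via a Rouch\'e argument controlled by the parametrix remainder $R$.

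The main obstacle lies precisely in this last step. As the candidate zeros migrate into the log-zone and beyond, the parametrix remainder $R(s)$, which is controlled semi-classically only in a bounded half-plane or a narrow log-strip, will at some point cease to be dominated by $\varphi_0$. Overcoming this requires either sharpening the remainder estimate along the log-line --- demanding finer propagation estimates for the geodesic flow than those currently available, in particular for trajectories winding deep into the cusps --- or replacing Rouch\'e with a global counting argument via a trace formula of Guillop\'e--Zworski type that bypasses the need to control zeros one by one. Such an argument is, to my knowledge, beyond present techniques, which is why the statement is formulated as a conjecture rather than a theorem; it belongs to the same family of open problems as the existence of infinitely many non-trivial zeros of dynamical zeta functions off the critical line.
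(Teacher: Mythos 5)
The statement you are addressing is a \emph{conjecture} in the paper; the author does not prove it, and offers only the one-sentence heuristic that ``the existence of such resonances seems to be more stable than their absence.'' Your write-up correctly treats it as open, but the strategy you outline has a more basic obstruction than the Rouch\'e-control issue you flag.

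Your plan is to upgrade the log-line phenomenon of part (III) of Theorem~\ref{theorem:main} into a generic one, i.e.\ to show that for an open and dense set of metrics the parametrix has infinitely many zeros tracking a log-curve $\Re s \sim -\log|\Im s|/T$. That cannot happen: part (II) of the theorem already establishes that for an open and dense set of metrics, \emph{every} log-zone $\{\Re s > -C\log|\Im s|\}$ (any $C>0$) contains only finitely many resonances past the strip. Two open dense subsets of $\mathcal{G}(M)$ intersect densely, so an open dense set carrying infinitely many log-line resonances would contradict (II). The log-line resonances of part (III) exist precisely because of the exceptional cancellation $a^0_0 = 0$ (i.e.\ $L_0 \in \mathscr{D}^1(\delta,\lambda)$), and Lemma~\ref{lemma:openness-coefficients} shows this cancellation is destroyed by a generic perturbation, after which Lemma~\ref{lemma:zeroes_Dirichlet_function_1} restores a resonance-free log-zone. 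So the mechanism of (III) is inherently nongeneric, and ``upgrading it'' is exactly what the theorem forbids.

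For a generic metric, the conjectured resonances, if they exist, must therefore lie outside \emph{every} log-zone, i.e.\ deeper in the left half-plane than any $\Re s = -C\log|\Im s|$. Detecting them lies beyond the reach of the Dirichlet-series parametrix, which is only asymptotic in a bounded right half-plane; this is precisely why the statement remains a conjecture. Your final paragraph's suggestion of a global counting/trace-formula argument is closer to the right direction, but should be presented as the \emph{primary} route rather than a fallback, since the Rouch\'e-on-a-log-line route is ruled out. The author's own (brief) rationale is the stability heuristic: once resonances leave the strip it is hard to bring them all back, so their presence should be robust --- a very different kind of argument from the one you sketch, and not one that currently yields a proof.
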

Our reason for conjecturing this is that the existence of such resonances seems to be more stable than their absence.

The main tool to prove theorem \ref{theorem:main} is a parametrix for the scattering determinant $\varphi$ in a half plane $\{ \Re s > \delta_g\}$. Thanks to the form of that parametrix --- sums of Dirichlet series --- we will be able to determine zones where $\varphi$ does not vanish. 
\begin{theorem}\label{theorem:Dirichlet-series-expansion-vertical-strip}
Let $(M,g)$ be a negatively curved cusp manifold. There is a constant $\delta_g > d/2$ and Dirichlet series $L_0,\ \dots,\ L_n,\ \dots $ with abscissa of absolute convergence $\delta_g$ such that if at least one of the $L_n$'s does not identically vanish, for $\Re s > \delta_g$, as $\Im s \to \pm \infty$,
\begin{equation*}
\varphi(s) \sim s^{-\kappa d/2} L_0 + s^{-\kappa d/2 - 1} L_1 + \dots .
\end{equation*}
(Recall, $\kappa$ is the number of cusps). Actually, the constant $\delta_g$ is the pressure of the potential $(F^{su}+d)/2$, where $F^{su}$ is the unstable jacobian. In constant curvature, $F^{su}=-d$ and $\delta_g = d$.
\end{theorem}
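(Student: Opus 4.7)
The plan is to build a semiclassical parametrix for each Eisenstein function $E_i(s)$ in the half-plane $\{\Re s > d/2\}$ and then read off the scattering coefficients $\phi_{ij}(s)$ as the zeroth Fourier mode of that parametrix at cusp $Z_j$. Concretely, I would fix smooth cutoffs $\chi_i$ localizing near infinity in cusp $Z_i$ and start from the model $\chi_i y^s$, which already satisfies $(-\Delta-s(d-s))\chi_i y^s=0$ outside a compact set; the true Eisenstein is then $E_i(s)=\chi_i y^s - R(s)f_i(s)$ with $f_i(s)=[-\Delta,\chi_i]y^s$ compactly supported, so the problem reduces to understanding $R(s)$ applied to Lagrangian data whose symbol carries the phase $s\log y$.

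Treating $h=1/|\Im s|$ as a semiclassical parameter, I would propagate this Lagrangian state along the geodesic flow $\Phi_t$ via a WKB/Hadamard parametrix. The horocyclic foliation in each cusp is carried by the Lagrangian $\{d\log y\}$; propagating it by $\Phi_t$ yields the strong unstable foliation, and the transport equation for the half-density amplitude is integrated by the square root of the unstable Jacobian along the orbit, that is by $\exp\bigl(-\tfrac{1}{2}\int_0^t F^{su}\circ\Phi_\tau\,d\tau\bigr)$. Representing $R(s)$ as $\int_0^\infty e^{itP(s)}\,dt$ acting on this front, only the portion that eventually reaches infinity in $Z_j$ along the vertical horocyclic direction contributes to the zeroth Fourier mode there.

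Extracting that Fourier mode by stationary phase in the angular coordinates of $Z_j$ isolates geodesic arcs $\gamma$ running from infinity in $Z_i$ to infinity in $Z_j$. Each such arc has a well-defined renormalized travel time $T(\gamma)$ (\`a la Margulis) and contributes a term $A(\gamma)e^{-sT(\gamma)}$, with amplitude $A(\gamma)$ built from the unstable Jacobian along $\gamma$ and the cusp renormalization factors. Setting $\lambda(\gamma)=e^{T(\gamma)}$ rewrites the total sum as a Dirichlet series; stationary phase in the remaining cusp variables produces the $\Gamma$-factor giving the $s^{-\kappa d/2}$ prefactor together with the full $1/s$-expansion, and passing to $\varphi=\det\phi$ preserves this shape, yielding $\varphi(s)\sim s^{-\kappa d/2}(L_0+s^{-1}L_1+\cdots)$. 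By the variational principle and Margulis-type counting estimates for geodesic arcs between cusps, the abscissa of absolute convergence of each $L_n$ works out to the topological pressure $P\bigl((F^{su}+d)/2\bigr)$, which reduces to $d$ in constant curvature since then $F^{su}\equiv -d$.

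The main obstacle is the presence of the cusps: the geodesic flow is not uniformly hyperbolic at infinity, and arcs from $Z_i$ to $Z_j$ can make arbitrarily long parabolic excursions into any cusp. One must show that these excursions are entirely encoded in the free-cusp resolvent, hence in the $\Gamma$-factor asymptotics, and that what remains is a counting problem over arcs of bounded parabolic excursion, to which the thermodynamic formalism for geometrically finite negatively curved flows applies. A secondary difficulty is keeping the semiclassical remainder in $R(s)$ uniform in $s$, so that each individual $L_n$ emerges as a genuine Dirichlet series with the claimed abscissa rather than only as a formal piece of an asymptotic expansion.
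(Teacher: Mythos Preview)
Your proposal captures the right heuristic --- scattered geodesics from $Z_i$ to $Z_j$, WKB amplitudes weighted by the unstable Jacobian along them, stationary phase producing $s^{-d/2}$ per cusp, determinant giving $s^{-\kappa d/2}$ --- but the technical route you sketch is genuinely different from the paper's, and the difference matters.

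The paper does \emph{not} attack $R(s)$ on $M$ via a propagator representation $\int_0^\infty e^{itP}\,dt$ and long-time propagation of a Lagrangian front. Instead it lifts to the universal cover $\widetilde{M}$, a Hadamard space with bounded geometry and hence no conjugate points, and for each parabolic point $p$ builds an explicit approximate Poisson kernel $P_N(x,p,s)=e^{-sG_p}\tilde J_p\, f_p^N(s)$ by solving the eikonal and transport equations globally. The Eisenstein parametrix is then the Poincar\'e-type sum $E_{i,N}=\sum_{[\gamma]\in\Gamma_p\backslash\Gamma}P_N(\gamma\,\cdot,p,s)$; the scattered geodesics appear not through dynamical propagation on $M$ but through the double-coset decomposition $\mathcal{SG}_{ij}\simeq\Gamma_p\backslash\Gamma/\Gamma_q$ when this sum is integrated over a horosphere based at $q$. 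The abscissa $\delta_g=\delta(\Gamma,V_0)$ with $V_0=(F^{su}+d)/2$ is obtained by comparing the sum term by term to a modified Poincar\'e series and using a Patterson--Sullivan density argument (Lemmas~\ref{lemma:divergence} and~\ref{lemma:convergence-modified-Poincare}), not via a variational principle for pressure or Margulis counting.

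The gap in your route is the long-time propagation step. On $M$ the nonwandering set of the geodesic flow is the full unit cotangent bundle, so the Lagrangian you propagate becomes dense and a clean separation into contributions of individual scattered geodesics is not available without unfolding to $\widetilde{M}$ --- at which point you are doing the paper's construction. Your claim that parabolic excursions are ``entirely encoded in the free-cusp resolvent'' is also not what happens: the paper handles the tails on each horosphere by direct symbol estimates (Lemmas~\ref{lemma:uniform_integrability}--\ref{lemma:uniform-bound-f}) showing the non-stationary region contributes $\mathcal{O}(s^{-\infty})$, and there is no separate $\Gamma$-factor in variable curvature --- the $s^{-d/2}$ is simply the Hessian factor from stationary phase on a single horosphere integral (Lemmas~\ref{lemma:non-degeneracy} and~\ref{lemma:asymptotic_expansion_scattering}). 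Finally, the uniformity-in-$s$ of the remainder that you flag as a ``secondary difficulty'' is precisely what the lift to $\widetilde{M}$ buys: each term in the Poincar\'e sum is an exact WKB solution up to an explicitly controlled $\mathcal{O}(s^{-N})$ error, and the resolvent is applied only once, to an $L^2$ remainder, at the very end (Theorem~\ref{theorem:parametrix_E}).
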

This is a consequence of a more precise estimate --- see Theorem \ref{theorem:parametrix_varphi}. The $L_n$'s are defined by dynamical quantities related to \emph{scattered geodesics}. Those are geodesics that come from one cusp and escape also in a cusp --- maybe the same --- spending only a finite time in the compact part of $M$, called the \emph{Sojourn Time}. This terminology was introduced by Victor Guillemin \cite{Guillemin}. In that article, for the case of constant curvature, he gave a version similar to ours of \eqref{eq:Selberg_Dirichlet_series}. He also conjectured that something along the lines of our theorem should hold --- see the concluding remarks pp. 79 in \cite{Guillemin}. Lizhen Ji and Maciej Zworski gave a related result in the case of locally symmetric spaces \cite{Ji-Zworski}. 

Sojourn Times are objects in the general theory of classical scattering --- see \cite{Petkov-Stoyanov}. Maybe ideas from different scattering situations may help to prove Conjecture \ref{conj:one-log-zone}, that may be reformulated as 
\newtheorem*{conj1'}{Conjecture 1'}
\begin{conj1'}\label{conj:non-vanishing-of-coefficients}
Given $g\in \mathcal{G}(M)$, at least one $L_i$ is not identically zero.
\end{conj1'}

The structure of the article is the following. In section \ref{sec:potential-and-scattered} we recall some definitions and results on cusp manifolds, and prove the convergence of a modified Poincar\'e series. Section \ref{sec:param-E} is devoted to building a parametrix for the Eisenstein functions, via a WKB argument, using the modified Poincar\'e series. In section \ref{sec:param-phi}, we turn to a parametrix for the scattering determinant. To use Stationary Phase, most of the effort goes into proving the non-degeneracy of a phase function. The purpose of section \ref{section:continuity-parametrix-metric} is to study the behaviour of the series $L_i$ when we vary the metric. Finally, we prove theorem \ref{theorem:main} in section \ref{section:Application}. In appendix \ref{appendix:horocycles}, for lack of a reference, we give a proof of a regularity result on horocycles. This result may be of interest for the study of negatively curved geometrically finite manifolds in general.

This work is part of the author's PhD thesis. In a forecoming article \cite{Bonthonneau-4}, we will deduce precise spectral counting results from Theorem \ref{theorem:Dirichlet-series-expansion-vertical-strip}.

\textbf{Acknowledgment} We thank Colin Guillarmou and St\'ephane Nonnenmacher for suggesting the idea that led to this article. We also thank Colin Guillarmou, Nalini Anantharaman and Maciev Zworski for their very helpful suggestions.

\setcounter{section}{-1}
\section{Preliminaries}

We start by recalling well-known facts in scattering theory on manifolds with cusps, and we refer to the articles of M\"uller \cite{Muller-83, Muller-86, Muller-92} for details and proofs. Let  $(M,g)$ be a complete Riemannian manifold that can be decomposed as follows: 
\begin{equation*}
M = M_0 \sqcup Z_1 \sqcup \dots \sqcup Z_\kappa,
\end{equation*}
where $M_0$ is a compact manifold with smooth boundary and negative curvature, and $Z_i$ are hyperbolic cusps
\begin{equation}\label{eq:def-Z}\index{$a_0$, $a_i$}
(a_i, +\infty ) \times \mathbb{T}_i^d \simeq Z_i \owns x =(y,\theta),\quad \theta = (\theta_1, \dots, \theta_d), \quad i= 1 \dots \kappa,
\end{equation}
where $a_i > 0$, and $\mathbb{T}_i^d = \mathbb{T}^d_{\Lambda_i} = \R^d / \Lambda_i$ are $d$-dimensional flat tori, and the metric on $Z_i$ in coordinates $( y,\theta) \in  (a_i , + \infty) \times \mathbb{T}_i^d$ is
\begin{equation*}
{d}s^2 = \frac{\mathrm{d}y^2 + \mathrm{d}\theta^2}{y^2}.
\end{equation*}
Notice that the manifold has finite volume when equipped with this metric. The choice of the coordinate $y$ on a cusp is unique up to a scaling factor, and we choose it so that all $\mathbb{T}_i^d$'s have volume $1$.  Such a manifold will be referred to as a \emph{cusp-manifold}. Mind that we require that they have \emph{negative curvature}.

The non-negative Laplacian $-\Delta$ acting on $C_c^\infty(M)$ functions has a unique self-adjoint extension to $L^2(M)$ and its spectrum consists of
\begin{enumerate}[ {[}1{]}]
\item Absolutely continuous spectrum $\sigma_{ac}=[d^2/4, +\infty)$ with multiplicity $\kappa$ (the number of cusps).
\item Discrete spectrum $\sigma_d =\{\lambda_0 = 0 < \lambda_1 \leq \dots \leq \lambda_\ell \leq \dots \}$, possibly finite, and which may contain eigenvalues embedded in the continuous spectrum.
\end{enumerate}

Conditions \eqref{eq:functional_equation_Eisenstein_eigen} and \eqref{eq:functional_equation_Eisenstein_Fourier} imply the uniqueness of the Eisenstein functions. One can deduce that
\begin{equation*}
E(d-s) = \phi(s)^{-1} E(s)
\end{equation*}
Hence,
\begin{equation*}
\phi(s)\phi(d-s)=Id, \quad \overline{\phi(s)} = \phi(\overline{s}), \quad \phi(s)^T = \phi(s), \text{ and } \varphi(s)\varphi(d-s) = 1.
\end{equation*}
The line $\Re s =d/2$ corresponds to the continuous spectrum. On that line, $\phi(s)$ is unitary, $\varphi(s)$ has modulus $1$.

The set $\mathcal{R}$ of poles of $\varphi$, $\phi$ and $(E_j)_{j=1\dots k}$ are all the same, we call them \emph{resonances}. It is contained in $\{\Re s < d/2\} \cup (d/2, 1]$. The union of this set with the set of $s\in \C$ such that $s(d-s)$ is an $L^2$ eigenvalue, is called the \emph{resonant set}, and denoted $\Res(M,g)$. Following \cite[pp.287]{Muller-92}, the multiplicities $m(s)$ are defined as :
\begin{enumerate}[ {[}1{]}]
	\item If $\Re s \geq d/2$, $s \neq d/2$, $m(s)$ is the dimension of $\ker_{L^2}(\Delta_g-s(d-s))$.
	\item If $\Re s < d/2$, $m(s)$ is the dimension of $\ker_{L^2}(\Delta_g-s(d-s))$ minus the order of $\varphi$ at $s$.
	\item $m(d/2)$ equals $({\rm Tr}(\phi(d/2)) + m)/2$ plus twice the dimension of $\ker_{L^2}(\Delta_g-d^2/4)$.
\end{enumerate}
We let 
\begin{equation*}
\mathcal{Z} = \{ d- s \ | \ s \text{ is a resonance }\}, \ \text{this is the set of zeroes of $\varphi$.}
\end{equation*}

\section[Scattered geodesics, Potential theory]{Scattered geodesics and some potential theory on cusp manifolds}\label{sec:potential-and-scattered}

Recall that a manifold $N$ is said to have \emph{bounded geometry} when its injectivity radius is strictly positive, and when $\nabla^k R$ is bounded for all $k=0, 1, \dots$, $R$ being the Riemann curvature tensor of $N$. Since the injectivity radius goes to zero in a cusp, a cusp manifold cannot have bounded geometry. However, its universal cover $\widetilde{M}$ does. Since the curvature of $M$ is negative, $\widetilde{M}$ is also a Hadamard space --- diffeomorphic to $\R^{d+1}$ --- and we can define its visual boundary $\partial_\infty \widetilde{M}$, and visual compactification $\overline{M} = \widetilde{M}\cup \partial_\infty \widetilde{M}$.

In all the article, unless stated otherwise, we will refer to the projection $T^\ast \widetilde{M} \to \widetilde{M}$ as $\pi$; when we say \emph{geodesic}, we always mean \emph{unit speed} geodesic.

The results given without proof are from the book \cite{PPS-12}.

\subsection{Hadamard spaces with bounded geometry and negative curvature}

Let us define the \emph{Busemann cocycle} in the following way. For $p\in \partial_\infty \widetilde{M}$, Let
\begin{equation*}
\beta_p(x,x'):=\lim\limits_{w \to p} d(x,w) - d(x',w).
\end{equation*}
For each $p\in \partial_\infty \widetilde{M}$, we pick $m_p \in \widetilde{M}$ --- we will specify this choice later, see remark \ref{remark:choice_normalization_horocycle}. Then, we define the horosphere $H(p, r)$ (resp. the horoball $B(p, r)$) of radius $r\in \R$ based at $p$ as
\begin{equation}
H(p,r):=\left\{x \in \widetilde{M} \ \middle|\ \beta_p(x,m_p) = - \log r \right\} \text{ and } B(p,r):=\left\{x \in \widetilde{M} \ \middle| \ \beta_p(x,m_p) \leq -\log r \right\}.
\end{equation} 
We also define
\begin{equation}
G_p(x) : = \beta_p(x,m_p).
\end{equation}
Beware that with these notations, horoballs $B(p,r)$ increase in size as $r$ decreases. The number $r$ will correspond to a height $y$ in the coming developments.

Since the curvature of $\widetilde{M}$ is pinched-negative $-k_{max}^2 \leq K \leq -k_{min}^2$, $\widetilde{M}$ has the \emph{Anosov property}. That is, at every point of $S^\ast \widetilde{M}$, there are subbundles such that
\begin{equation*}\index{$E^s$, $E^u$}
T (S^\ast M) = \R \mathbf{X} \oplus E^s \oplus E^u
\end{equation*}
where $\mathbf{X}$\index{$\mathbf{X}$} is the vector field of the \emph{geodesic flow} $\varphi_t$. This decomposition is invariant under $\varphi_t$, and there are constants $C>0, \lambda > 0$ such that for $t>0$
\begin{equation*}
\| d \varphi_t |_{E^s} \| \leq C e^{-\lambda t} \text{ and } \| d \varphi_{-t} |_{E^u} \| \leq C e^{- \lambda t}.
\end{equation*}
The subbundle $E^s$ (resp. $E^u$) is tangent to the \emph{strong stable} (resp. \emph{unstable}) foliation $W^s$ (resp. $W^u$). The subbundles $E^s$, $E^u$ are only H\"older --- see \cite[theorem 7.3]{PPS-12} --- but each leaf of $W^s$, $W^u$ is a $\mathscr{C}^\infty$ submanifold of $\widetilde{M}$ --- see lemma \ref{lemma:Regularity_horocycles}.

\begin{remark}\label{remark:measuring_regularity}
We have to say how we measure regularity on $\widetilde{M}$ and $T\widetilde{M}$. In $TT\widetilde{M}$, we have the \emph{vertical} subbundle $V= \ker T\pi : TT\widetilde{M} \to T\widetilde{M}$. Since $\widetilde{M}$ is riemannian, we also have a horizontal subbundle $H$ given by the connection $\nabla$. Both $V$ and $H$ can be identified with $T\widetilde{M}$, and the Sasaki metric is the one metric on $T\widetilde{M}$ so that $V \perp H$ and those identifications are isometries. 

We endow $T \widetilde{M}$ with the Sasaki metric, and then also $T^\ast \widetilde{M}$ by requesting that $ v \mapsto \langle v , \cdot \rangle$ is an isometry. For a detailed account on the Sasaki metric, see \cite{Gudmundsson-Kappos}. On all the manifolds that appear, when they have a metric, we define their $\mathscr{C}^k$ spaces, $k\in \N $, using the norm of their covariant derivatives:
\begin{equation*}
\| f \|_{\mathscr{C}^n} := \sup_{k=0,\dots,n} \| \nabla^k f\|_\infty.
\end{equation*}
Then, $\mathscr{C}^\infty = \cap_{n \geq 0} \mathscr{C}^n$. For a more detailed account of $\mathscr{C}^k$ spaces on a riemannian manifold, see for example the appendix ``functionnal spaces in a cusp'' in \cite{Bonthonneau-2}.
\end{remark}

There are useful coordinates for describing the geodesic flow $\varphi_t$ on $S^\ast \widetilde{M}$. We associate its endpoints $p^-$, $p^+$ with a geodesic. Then we have the identification
\begin{equation*}
S^\ast \widetilde{M} \simeq \partial^2_\infty \widetilde{M}\times \R 
\end{equation*}
given by $\xi \mapsto (p^-, p^+, t = \beta_{p^-}(\pi \xi, m_{p^-}))$. Here $\partial^2_\infty \widetilde{M}$ is obtained by removing the diagonal from $\partial_\infty \widetilde{M} \times \partial_\infty \widetilde{M}$. In those coordinates, $\varphi_t$ is just the translation by $t$ in the last variable. Moreover, the \emph{strong unstable manifold} of $\xi$ is the set $\{p^- = p^-(\xi), t=t(\xi)\}$. For the strong stable manifold, it is a bit more complicated in this choice of coordinates.

We deduce that $W^u(\xi)$ is the set outer normal bundle to the horosphere based at $p^-(\xi)$, through $\pi \xi$. The horospheres $H(p,r)$ are $\mathscr{C}^\infty$ submanifolds, and each $G_p$ is a smooth function so that $dG_p \in\mathscr{C}^\infty(\widetilde{M})$. The proof uses the fact that the unstable manifolds $W^u$ are $\mathscr{C}^\infty$ (lemma \ref{lemma:Regularity_horocycles}), and the fact that there can be no conjugate points in negative curvature.

For $p\in \partial_\infty \widetilde{M}$, we introduce $W^{u0}(p)$ as the set of $\xi \in S^\ast \widetilde{M}$ such that $p^-(\xi)=p$. It is the set of outer normals to horospheres based at $p$. It is the graph of $dG_p$, and
\begin{equation*}
G_p(\pi\varphi_t(x,\mathrm{d} G_p)) = G_p + t.
\end{equation*}
We will refer to $W^{u0}(p)$ as the \emph{incoming Lagrangian} from $p$.

\subsection{Parabolic points and scattered geodesics}\label{section:parabolic-points-scattered-geodesics}
Now, let $\Gamma= \pi_1(M)$. It is a discrete group acting freely on $\widetilde{M}$ by isometries. The elements of $\Gamma$ can be seen to act by homeomorphisms on $\overline{M}$. We can define the limit set $\Lambda(\Gamma)$ as $\overline{\Gamma \cdot x^0} \cap \partial_\infty \widetilde{M}$, where the closure was taken in $\overline{M}$, and $x^0$ is an arbitrary point in $\widetilde{M}$. This does not depend on $x^0$. 

If $\gamma \in \Gamma$ is not the identity, one can prove that it has either. (1) Exactly one fixed point in $\widetilde{M}$, (2) Exactly two fixed points on $\partial_\infty \widetilde{M}$, (3) Exactly one fixed point in $\partial_\infty \widetilde{M}$. Then we say that it is (1) elliptic, (2) loxodromic, or (3) parabolic. Here there are no elliptic elements in $\Gamma$, since $\Gamma$ acts freely on $\widetilde{M}$. Our study will be focused of the parabolic elements of $\Gamma$.

All the parabolic elements $\gamma$ of $\Gamma$ are \emph{regular}, in the following sense: there is $r_\gamma \in \R_+^\ast$ so that if $p_\gamma$ is the fixed point of $\gamma$, $B(p_\gamma, r_\gamma)$ has constant curvature $-1$. We denote by $\Gamma_{par}$ the set of parabolic elements in $\Gamma$. The set of $p_\gamma$'s is the set of \emph{parabolic points} of $\partial_\infty \widetilde{M}$, $\Lambda_{par}$.

Let $p\in \Lambda_{par}$. Then, horoballs centered at $p$ will project down to $M$ as neighbourhoods of some cusp $Z_i$, and we say that $p$ is a parabolic point that \emph{represents} $Z_i$. When $\gamma.p=p$, we also say that $\gamma$ represents $Z_i$. Objects (points in the boundary, or elements of $\Gamma_{par}$) representing the same cusp will be called equivalent. $\Gamma$ acts on $\Gamma_{par}$ by conjugation, and elements of the same orbit under $\Gamma$ are equivalent --- however observe that the equivalence classes gather many different orbits under $\Gamma$.

If $p$ is a parabolic point representing $Z_i$, write $p \in \Lambda_{par}^i$. Let $\Gamma_p < \Gamma$ be its stabilizer. It is a \emph{maximal parabolic subgroup}. We always have $\Gamma_p \simeq \pi_1(\mathbb{T}^d_i) \simeq \Z^d$. The set of parabolic points equivalent to $p$ is in bijection with $\Gamma_p \backslash \Gamma = \{ \Gamma_p \gamma,\ \gamma \in \Gamma\}$.

The following lemma seems to be well known in the literature. However, since we cannot give a reference for a proof, we have written one down.
\begin{lemma}
Since $M$ has finite volume, $\Lambda(\Gamma)$ is the whole boundary, and the parabolic points are dense in $\partial_\infty \widetilde{M}$.
\end{lemma}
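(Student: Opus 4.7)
The plan is to handle the two claims in sequence, proving $\Lambda(\Gamma)=\partial_\infty \widetilde M$ first and then deducing density of the parabolic points from minimality of the $\Gamma$-action on its limit set. Fix a base point $x_0 \in \widetilde M$ and, given $p \in \partial_\infty \widetilde M$, let $c:[0,\infty)\to \widetilde M$ be the unit speed geodesic ray with $c(0)=x_0$ and $c(\infty)=p$. The projection $\bar c := \pi \circ c$ runs in $M=M_0 \sqcup Z_1 \sqcup\cdots\sqcup Z_\kappa$, and either (A) $\bar c$ returns to the compact core $M_0$ along some sequence $t_n\to \infty$, or (B) there exist $T>0$ and $i$ with $\bar c(t)\in Z_i$ for all $t\geq T$. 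In case (A), I fix a compact fundamental domain $K\subset \pi^{-1}(M_0)$ for the $\Gamma$-action and choose $\gamma_n\in\Gamma$ with $\gamma_n c(t_n)\in K$; after passing to a subsequence, $\gamma_n c(t_n)\to q\in K$, and
\begin{equation*}
d\bigl(\gamma_n^{-1}x_0,\, c(t_n)\bigr) = d\bigl(x_0,\, \gamma_n c(t_n)\bigr)
\end{equation*}
stays bounded. Since $c(t_n)\to p$ in the visual compactification, and bounded distance preserves boundary limits in Hadamard manifolds of pinched negative curvature, $\gamma_n^{-1}x_0\to p$, so $p\in \Lambda(\Gamma)$.

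In case (B), the cusp $Z_i$ has constant curvature $-1$, so every geodesic in $Z_i$ lifts to a vertical line or a half-circle in the upper half-space model. Half-circles exit any horoball in finite time, so a geodesic trapped in $Z_i$ for all $t\geq T$ must have $y(t)\to \infty$; correspondingly the lift $c(t)$ converges to the parabolic fixed point $p'\in \Lambda^i_{par}$ centering the horoball it enters. Thus $p=p'$ is parabolic, and in particular $p\in \Lambda(\Gamma)$. Combining the two cases yields $\Lambda(\Gamma)=\partial_\infty \widetilde M$.

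For density of the parabolic points, $\Gamma$ is non-elementary: it contains the rank-$d$ parabolic subgroup $\Gamma_{p_0}$ for any parabolic point $p_0$, and also loxodromic elements (for instance those associated with closed geodesics of $M_0$) with independent axes. A standard fact about non-elementary discrete isometry groups of pinched negatively curved Hadamard manifolds (see \cite{PPS-12}) is that the action on the limit set is minimal: every orbit in $\Lambda(\Gamma)$ is dense. Taking any single parabolic fixed point $p_0$, the orbit $\Gamma\cdot p_0$ consists entirely of parabolic fixed points (parabolicity is a conjugation invariant) and is dense in $\Lambda(\Gamma)=\partial_\infty \widetilde M$, which proves the claim.

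The main obstacle is case (B): one has to rule out any oscillatory behaviour of a geodesic fully trapped in a cusp, and this is the only place where the explicit hyperbolic form of the cusp ends is used; in variable negative curvature one would instead invoke strict convexity of horoballs and absence of conjugate points, but the constant-curvature model gives the cleanest and shortest argument. Everything else reduces to abstract CAT$(-1)$ geometry (for case (A) and the behaviour of the visual compactification under bounded perturbations) or to general facts about the dynamics of non-elementary discrete isometry groups.
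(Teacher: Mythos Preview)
Your proof is correct and takes a genuinely different route from the paper's. You argue via a dichotomy on geodesic rays: either the projected ray returns to the compact core infinitely often (and a cocompactness argument puts its endpoint in $\Lambda(\Gamma)$), or it is eventually trapped in a cusp (and the explicit half-space model forces the endpoint to be parabolic). Density then follows from minimality of the $\Gamma$-action on $\Lambda(\Gamma)$ for non-elementary groups. The paper instead works directly with horoballs: it shows that the $\Gamma$-translates of a fixed horosphere $H(p,a_i)$ have shrinking diameters in the visual compactification, so that $\Lambda(\Gamma)=\overline{\Lambda_{par}^i}$, and then uses that the horoballs together with the preimage of $M_0$ cover $\widetilde M$ to conclude $\Lambda(\Gamma)=\partial_\infty\widetilde M$. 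Your approach is more dynamical and modular, leaning on standard CAT$(-1)$ facts; the paper's is more self-contained and in fact yields the slightly stronger statement that the parabolic points \emph{from a single cusp} are already dense. One small remark: your justification that $\Gamma$ is non-elementary via the existence of loxodromics is unnecessary once you have $\Lambda(\Gamma)=\partial_\infty\widetilde M$, since a limit set with more than two points already forces non-elementarity.
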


\begin{proof}
Let us pick a cusp $Z_i$, and a point $p\in \Lambda_{par}^i$. Then, we consider $x\in \partial Z_i$ in the boundary of $Z_i$ in $M$. We can lift $x$ to $\tilde{x} \in H(p,a_i)$. The orbit under $\Gamma$ of any $\tilde{x}'\in H(p, a_i)$ will remain at bounded distance of the orbit of $\tilde{x}$ under $\Gamma$. We deduce that $\Lambda(\Gamma)$ is the intersection of the closure of $\cup_\gamma \gamma H(p,a_i)$ with the boundary $\partial_\infty\widetilde{M}$. This implies in particular that $\Lambda_{par}^i \subset \Lambda(\Gamma)$.

Now, we can find a distance $d$ on $\overline{M}$ that is compatible with its topology. Indeed, take a point $m\in \widetilde{M}$, and consider the distance $\tilde{d}$ obtained on $\overline{M}$ by requesting that 
\begin{equation*}
v \in B(0,1)\subset T \widetilde{M} \mapsto \exp_{z}\left\{ v \times \mathrm{argth} |v|\right\} \text{ is an isometry.}
\end{equation*}

Then, for that distance, the sequence of images $\gamma H(p,a_i)$ have shrinking radii. Now, take a sequence of points $\tilde{x}_j \in \gamma_j H(p,a_i)$, so that $\tilde{x}_j \to q \in \Lambda(\Gamma)$. We have $\gamma_j H(p, a_i) = H(\gamma_j p, a_i)$, and so $\tilde{d}(\tilde{x}_j, \gamma_j p) \to 0$. This proves that $\Lambda(\Gamma) = \overline{\Lambda_{par}^i}$.

Next, consider the open set $U$ in $\widetilde{M}$ obtained by taking only points of $\widetilde{M}$ that project to points in the compact part $\mathring{M}_0 \subset \subset M$. There is $C>0$ such that given $\tilde{x}_1\in U$, for any $\tilde{x}_2\in U$, there is a $\gamma\in \Gamma$ such that $d(\gamma \tilde{x}_1, \tilde{x}_2) \leq C$. 

Let $\overline{U}$ be the closure of $U$ in $\overline{M}$. Since $U$ is at distance at most $C$ of the orbit of any of its points under $\Gamma$, we deduce that the limit set is $\overline{U} \cap \partial_\infty \widetilde{M}$.

Then, we find that $\Lambda(\Gamma) = \overline{\cup_\gamma \gamma H(p,a_i)}\cap \partial_\infty \widetilde{M} = \overline{U}\cap \partial_\infty \widetilde{M}$. But, we also have $\overline{\cup_\gamma \gamma H(p,a_i)}\cap \partial_\infty \widetilde{M}=\overline{\cup_\gamma \gamma B(p,a_i)}\cap \partial_\infty \widetilde{M}$. We deduce that
\begin{equation}
\Lambda(\Gamma) = \left\{\overline{\cup_\gamma \gamma B(p,a_i)}\cup \overline{U}\right\}\cap \partial_\infty \widetilde{M} = \overline{U\cup_\gamma \gamma B(p,a_i)} \cap \partial_\infty \widetilde{M} = \partial_\infty \widetilde{M}.
\end{equation}

\end{proof}

\begin{remark}\label{remark:choice_normalization_horocycle}
We will not use the functions $G_p$ when $p$ is not a parabolic point. When $p\in \Lambda_{par}^i$, one can choose the point $m_p$ so that $G_p$ coincides with $-\log \tilde{y}_p$ on the horoball $H(p, r_p)$, where $\tilde{y}_p$ is obtained on $H(p, r_p)$ by lifting the height function $y$ on the cusp $Z_i$. With this choice,  for $p\in \Lambda_{par}$ and $\gamma \in \Gamma$, we have the equivariance relation
\begin{equation}\label{eq:equivariance-G_p}
G_{\gamma^{-1} p} = G_p \circ \gamma.
\end{equation}
\end{remark}

Geodesics that enter a cusp eventually come back to $M_0$ when they are not \emph{vertical}, that is, when they are not directed along $\pm \partial_y$. A geodesic that \emph{is} vertical in a cusp is said to escape in that cusp. 
\begin{definition}
The \emph{scattered geodesics} are geodesics on $M$ that escape in a cusp for both $t\to +\infty$ and $ t\to -\infty$.
\end{definition}
The set of scattered geodesics is denoted by $\mathcal{SG}$. Such a geodesic, when lifted to $\widetilde{M}$, goes from one parabolic point to another, and hence is entirely determined by its endpoints. Take $p,\ q$ representing $Z_i,\ Z_j$. For $\gamma,\ \gamma' \in \Gamma$, the pair of endpoints $(p, \gamma q)$ and $(\gamma' p, \gamma' \gamma q)$ represent the same geodesic on $M$. We let $\mathcal{SG}_{ij}$ be the set of geodesics scattered from $Z_i$ to $Z_j$. From the above, we deduce that when $i\neq j$,
\begin{equation}\label{eq:identification-scattered-geodesics}
\mathcal{SG}_{ij} \simeq \Gamma_i \backslash \Gamma \slash \Gamma_j \quad \text{ and } \quad \mathcal{SG}_{ii} \simeq \Gamma_i \backslash (\Gamma - \Gamma_i) / \Gamma_i,
\end{equation} 
where $\Gamma_{i}$ (resp. $\Gamma_j$) is any maximal parabolic subgroup representing $Z_{i}$ (resp $Z_j$).

On the other hand, we can consider the set of $C^1$ curves that start in $Z_i$ above the torus $\{ y= a_i\}$ and end in $Z_j$, above the torus $\{y= a_j\}$. Among those curves, we can consider the classes of equivalence under free homotopy. Let $\pi^{ij}_1(M)$ be the set of such classes. One can prove that in each class $[c]\in \pi^{ij}_1(M)$, there is exactly one element $\overline{c}$ of $\mathcal{SG}_{ij}$. In particular, this proves that $\mathcal{SG}$ is countable. Hence, we have an identification $\mathcal{SG}{ij} \simeq \pi^{ij}_1(M)$. In what follows, when there is no ambiguity on the metric, we will write directly $c\in \pi^{ij}_1(M)$. In section \ref{section:continuity-parametrix-metric}, we will study variations of the metric, and will come back to the notation $[\overline{c}] \in \pi^{ij}_1(M)$.

For a scattered geodesic $c_{ij}$, we define its \emph{Sojourn Time} in the following way. Take one of its lifts $\tilde{c}_{ij}$ to $\widetilde{M}$, with endpoints $p$, $q$. Let $T$ be the (algebraic) time that elapses between the first time $\tilde{c}_{ij}$ hits $\{\tilde{y}_{p}= a_i\}$, and the last time it crosses $\{\tilde{y}_{q}= a_j\}$. Then, let
\begin{equation}\label{eq:def-T}
\mathcal{T}(c_{ij}) := T - \log a_i - \log a_j.
\end{equation}
This does not depend on the choice of $a_i$ and $a_j$ (as defined in \eqref{eq:def-Z})\index{$a_0$, $a_i$}, nor on the choice of the lift $\tilde{c}_{ij}$. We say that $\mathcal{T}(c_{ij})$ is the \emph{Sojourn Time} of $c_{ij}$, and we can see $\mathcal{T}$ as a function on $\pi^{ij}_1(M)$. Given $T>0$, there is a finite number of $c\in \mathcal{SG}_{ij}$ with sojourn time less than $T$ (otherwise, we would have two such curves that would be so close from one another that they would be homotopic).

We denote by $\mathcal{ST}$ (resp. $\mathcal{ST}_{ij}$) the set of $\mathcal{T}(c)$ for scattered geodesics (resp. between $Z_i$ and $Z_j$). We also call the \emph{Sojourn Cycles} and denote by $\mathcal{SC}$ the set of sums 
\begin{equation}\label{eq:def-SC}
\mathcal{T}_1 + \dots + \mathcal{T}_\kappa 
\end{equation}
where $\sigma$ is a permutation of $\{ 1, \dots, \kappa \}$, and $\mathcal{T}_i\in \mathcal{ST}_{i, \sigma(i)}$. A set of scattered geodesics $\{c_1, \dots, c_\kappa \}$ such that $c_i \in \mathcal{SG}_{i\sigma(i)}$ will be called a \emph{geodesic cycle}.

\subsection{A convergence lemma for modified Poincar\'e series}\label{section:Poincare_series}

Poincar\'e series is a classical object of study in the geometry of negatively curved spaces --- see \cite{Dalbo-Otal-Peigne} for example. For $\Gamma$ a group of isometries on $\widetilde{M}$, its Poincar\'e series at $x\in \widetilde{M}$ is
\begin{equation*}
P_{\Gamma}(x,s) = \sum_{\gamma\in\Gamma} e^{-s d(x, \gamma x)},\quad s\in \R.
\end{equation*}
More generally, given a \emph{Potential} on $S \widetilde{M}$, i.e a H\"older function $V$ on $S \widetilde{M}$ invariant by $\Gamma$, its \emph{Poincar\'e series} is
\begin{equation*}
P_{\Gamma, V}(x,s):= \sum_{\gamma \in \Gamma} e^{\int_x^{\gamma x} V - s}
\end{equation*}
where $\int_x^{\gamma x} V - s$ is the integral of $V -s$ along the geodesic from $x$ to $\gamma x$. The convergence of both series does not depend on $x$, only on $s$.

\begin{remark}
We will not write $\int (V-s)$ to reduce the size of the expressions. We will assume that the integrand is all that is written after the sign $\int$, until we encounter another $\int$ sign. 

When $p$ is a point on the boundary, $x$ and $x'$ in $\widetilde{M}$, $\int_x^p - \int_{x'}^p V$ will refer to the limit of $\int_x^{\tilde{p}} V - \int_{x'}^{\tilde{p}} V$ as $\widetilde{M}\owns\tilde{p} \to p$. When $V$ is H\"older, this limit exists because the geodesics $[x,p]$ and $[x',p]$ are exponentially close.

When we sum over $\{ [\gamma] \in \Xi\backslash \Gamma\}$ we mean that we sum over a set of representatives for $\Xi\backslash \Gamma$ ($\Xi$ being assumed to be a subgroup of $\Gamma$).

We only work with reversible potentials $V$. That means that $\imath V$ is cohomologous to $V$ (following \cite{PPS-12}, $\imath$ is the antipodal map in $S \widetilde{M}$). In other words, we require that
\begin{equation}
\int_x^y V - \imath V = A(y) - A(x)
\end{equation}
where $A$ is a bounded H\"older function on $S \widetilde{M}$, invariant by $\Gamma$. In particular when this is the case, we can replace $V$ by $\imath V$ in the integrals, losing a $\mathcal{O}(1)$ remainder. It is then harmless to integrate along a geodesic in a direction or the other.
\end{remark}

In our case where $\Gamma$ is the $\pi_1$ of $M$, it is a general fact that there is a finite $\delta(\Gamma, V)\in \R$ such that $P_{\Gamma, V}$ converges for $s> \delta(\Gamma, V)$ and diverges for $s< \delta(\Gamma, V)$. This number is called the \emph{critical exponent} of $(\Gamma, V)$. We also call $\delta_\Gamma=\delta(\Gamma, 0)$ the critical exponent of $\Gamma$.

\begin{remark}
The exponent of convergence of a maximal parabolic subgroup $\Gamma_p$ is always $\delta_{\Gamma_p} = d/2$. Additionally, the Poincar\'e series for $\Gamma_p$ diverges at $d/2$ ($\Gamma_p$ is \emph{divergent}). This can be seen computing explicitely with the formula for the distance between two points $(y, \theta)$ and $(y, \theta')$ in the half-space model of the real hyperbolic space $\Hh^{d+1}$
\begin{equation}\label{eq:length-upper-space}
d((y, \theta),(y, \theta)) = 2 \mathrm{argsh} \frac{|\theta - \theta'|}{2y}.
\end{equation}
\end{remark}

\begin{definition}\label{lemma:def-admissible-potential}
In what follows, we say that a potential $V$ is \emph{admissible} if the following holds. First, $V$ is H\"older function on $S \widetilde{M}$, invariant by $\Gamma$ and reversible. Second, there are positive constants $C, \lambda$, and a constant $V_\infty\in \R$ such that whenever $T >0$, if $\pi \varphi_t(\xi)$ stays in an open set of constant curvature $-1$ for $t\in [0, T]$, then for $t \in [0, T]$, 
\begin{equation}\label{eq:def-admissible-potential}
|V(\varphi_t(\xi)) - V_\infty | \leq C e^{-\lambda t}.
\end{equation}
\end{definition}

Observe that an admissible potential has to be bounded. We will mostly use the potential $V_0=(F^{su}+d)/2$ where $F^{su}$ is the unstable jacobian (see \eqref{eq:def-unstable-jacobian} and \eqref{eq:def-V_0}). We start with the following lemma:
\begin{lemma}\label{lemma:divergence}
Let $V$ be an admissible potential. Then $\delta(\Gamma, V) > \delta_{\Gamma_p} + V_\infty$.
\end{lemma}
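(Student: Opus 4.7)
My strategy is to first establish the critical exponent of $\Gamma_p$ alone (together with divergence of its Poincaré series), then leverage elements of $\Gamma \setminus \Gamma_p$ to push $\delta(\Gamma, V)$ strictly higher via a Klein--Maskit combination argument.

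\emph{Step 1: Poincaré series for $\Gamma_p$.} I take as basepoint a lift $x \in \widetilde{M}$ on the horosphere $H(p, r_p)$ bounding the constant-curvature region around $p$. For every non-trivial $\gamma \in \Gamma_p$ the geodesic from $x$ to $\gamma x$ lies inside $B(p, r_p)$, where the curvature is identically $-1$ and the cusp has the explicit half-space form; parametrized by arc length, its height $h(t)$ above $\partial B(p, r_p)$ is a concave function symmetric about the midpoint. Admissibility \eqref{eq:def-admissible-potential} gives $|V - V_\infty| \leq C e^{-\lambda h(t)}$ along the geodesic, and an explicit computation in the half-space model (using the semi-circular shape of the geodesic and the expression \eqref{eq:length-upper-space} for distances) shows that $\int_0^L e^{-\lambda h(t)} \, dt$ remains bounded independently of $L = d(x, \gamma x)$. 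Hence
\[
\int_x^{\gamma x} V = V_\infty \cdot d(x, \gamma x) + O(1) \quad \text{uniformly in } \gamma \in \Gamma_p,
\]
so $P_{\Gamma_p, V}(x, s) \asymp P_{\Gamma_p}(x, s - V_\infty)$. Since $\Gamma_p \simeq \Z^d$ is divergent at $\delta_{\Gamma_p} = d/2$, the series $P_{\Gamma_p, V}$ diverges at $s = V_\infty + \delta_{\Gamma_p}$; in particular $\delta(\Gamma, V) \geq V_\infty + \delta_{\Gamma_p}$.

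\emph{Step 2: Strict inequality.} By the preceding lemma, parabolic points are dense in $\partial_\infty \widetilde{M}$, so there exists $\gamma_0 \in \Gamma$ with $\gamma_0 p \neq p$. Since a non-trivial parabolic element has a unique fixed point at infinity, $\Gamma_p \cap \gamma_0 \Gamma_p \gamma_0^{-1} = \{e\}$, and after replacing $\gamma_0$ by a sufficiently high iterate a Klein--Maskit ping-pong argument (using disjoint horoball neighbourhoods of $p$ and $\gamma_0 p$) shows that the two parabolic subgroups generate a subgroup of $\Gamma$ isomorphic to the free product $\Gamma_p \ast \gamma_0 \Gamma_p \gamma_0^{-1}$. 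For a reduced word $w = h_1 h_2 \cdots h_n$ of length $n$ with syllables alternating in the two factors, a quasi-geodesic estimate gives
\[
d(x, w \cdot x) = \sum_k d(x, h_k x) + O(n), \qquad \int_x^{w \cdot x} V = V_\infty \sum_k d(x, h_k x) + O(n),
\]
where the implicit constants depend only on the ping-pong configuration. Consequently the sub-series of $P_{\Gamma, V}$ over such alternating words dominates a geometric series in $C^{-1} P_{\Gamma_p, V}(s - V_\infty) \cdot C^{-1} P_{\gamma_0 \Gamma_p \gamma_0^{-1}, V}(s - V_\infty)$. By Step 1 both factors blow up at $s = V_\infty + \delta_{\Gamma_p}$, so the geometric ratio exceeds $1$ on an interval $[V_\infty + \delta_{\Gamma_p}, s^\star)$ with $s^\star > V_\infty + \delta_{\Gamma_p}$, forcing $P_{\Gamma, V}(s^\star) = \infty$ and hence $\delta(\Gamma, V) \geq s^\star > V_\infty + \delta_{\Gamma_p}$.

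\emph{Main obstacle.} The delicate point is the quasi-geodesic word estimate in variable negative curvature: one must verify that the per-syllable error constants in both displayed formulas are genuinely $O(1)$, independently of how deep each syllable projects into its cusp, so that their accumulation over a word of length $n$ is only $O(n)$. This relies on convexity of horoballs (valid in any variable negative curvature), exponential contraction along the stable/unstable foliations (the Anosov property recalled earlier) to control the entry/exit angles of syllable geodesics, and the admissibility hypothesis \eqref{eq:def-admissible-potential} to absorb the contribution of the deep cusp excursions to the integral of $V$. The remainder of the argument (geometric sum, conclusion on $\delta(\Gamma, V)$) is then essentially formal.
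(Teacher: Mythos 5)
You take a genuinely different route from the paper. The paper (following Dal'bo--Otal--Peign\'e) invokes a Patterson density $\mu$ of dimension $\delta(\Gamma, V)$ supplied by \cite[Prop. 3.9]{PPS-12}: since $\mu_x$ is a finite measure supported on all of $\partial_\infty\widetilde{M}$, summing $\mu_x(\gamma^{-1}\mathscr{G})$ over $\gamma\in\Gamma_p$ and using the conformal change-of-measure formula together with obtuse-triangle estimates forces $P_{\Gamma_p,V}(x, \delta(\Gamma,V)) < \infty$, and divergence of $\Gamma_p$ then gives the strict inequality in one stroke. Your Step 1 coincides with the paper's final reduction (the comparison $P_{\Gamma_p,V}(x,s) \asymp P_{\Gamma_p}(x, s-V_\infty)$ by admissibility), but your Step 2 replaces Patterson theory with a Schottky/ping-pong argument: build a free subgroup $\Gamma_p \ast \gamma_0\Gamma_p\gamma_0^{-1} < \Gamma$, estimate $P_{\Gamma,V}$ from below by a geometric series over alternating words, and exploit divergence of $\Gamma_p$ at $d/2$ to make the geometric ratio exceed $1$ strictly past $V_\infty + d/2$. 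This is more elementary and self-contained (no appeal to existence of equilibrium densities) but shifts the burden onto a quantitative quasi-geodesic word estimate.

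That estimate is where there is a genuine gap. You assert $d(x, w\cdot x) = \sum_k d(x, h_k x) + O(n)$ and $\int_x^{w\cdot x}V = V_\infty\sum_k d(x,h_k x)+O(n)$, with implicit constants uniform both in the word and in how deeply each syllable travels into its cusp, but your ``main obstacle'' paragraph identifies these claims without proving them. To close the gap one must show the syllable segments of the broken path meet at angles uniformly bounded below at the break points --- this follows from $d\bigl(B(p,r_p),\, h\,B(\gamma_0 p, r_{\gamma_0 p})\bigr) = d\bigl(B(p,r_p),\, B(\gamma_0 p, r_{\gamma_0 p})\bigr) > 0$ for all $h\in\Gamma_p$ --- so the broken path is a uniform quasi-geodesic; and then that the true geodesic $[x, w\cdot x]$ enters each horoball to roughly the same depth as the corresponding syllable (a fellow-travelling consequence of the Morse lemma in pinched negative curvature), so that admissibility absorbs each cusp excursion into $\int V$ at a per-syllable $O(1)$ cost, and the exact statement of \eqref{eq:def-admissible-potential} (decay in the \emph{time} since entering constant curvature, not in height) together with reversibility is what is actually needed. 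None of this is beyond reach --- it is how Schottky products of parabolic groups are handled in the Dal'bo--Peign\'e literature --- but as written it is a sketch of a needed lemma, not a proof of it.
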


If $V=0$, this is the consequence of \cite[Proposition 2]{Dalbo-Otal-Peigne}. We will actually follow their proof closely, but before, we need 2 observations on triangles in $\widetilde{M}$.
\begin{remark}\label{remark:Topogonov}
\textbf{1.} Consider a triangle with sides $a,\ b,\ c$ and angles $\alpha, \beta, \gamma$ in a complete Hadamard space $\widetilde{M}_k$ of curvature $-k^2$. We have 
\begin{equation*}
\cosh k c = \cosh k a \cosh k b - \sinh k a \sinh k b \cos \gamma.
\end{equation*}
Assume that $\gamma > \pi /2$ (the triangle is obtuse). Then we find that there is a constant $C_k>0$ --- smooth in $k \neq 0$ and $k\neq \infty$ --- such that
\begin{equation}\label{eq:obtuse-triangle}
| c - (a+b) | \leq C_k.
\end{equation}
Since the curvature of $M$ is pinched, by the Topogonov comparison theorem for triangles, the same is true for obtuse triangles in $ \widetilde{M} $, with a constant $C$ controlled by $k_{min}$ and $k_{max}$.

\textbf{2.} Now, we consider a triangle with sides $c_0, c_1, c_2$ in $\widetilde{M}$, and $V$ an admissible potential on $\widetilde{M}$. Take $C>0$. Among those triangles, we restrict ourselves to the ones such that the length of $c_0$ is at most $C$. Then
\begin{equation}\label{eq:triangle-small-side}
\int_{c_1} - \int_{c_2} V = \mathcal{O}(1).
\end{equation}
this is still valid if the vertex at $c_1 \cap c_2$ is at infinity. Actually, to prove this, first observe that it suffices to make computations for that case when $c_1 \cap c_2$ is at infinity. Then it follows directly from the fact that the two curves are exponentially close in that case. 
\end{remark}

\begin{proof}[Proof of lemma \ref{lemma:divergence}]
The limit set of $\Gamma_p$ is reduced to $\{ p\}$. In $H(p, a_i)$, $\Gamma_p$ has a Borelian fundamental $\mathscr{B}$ domain whose closure is compact. We can obtain a fundamental domain $\mathscr{G}$ for $\Gamma_p$ on $\partial_\infty \widetilde{M}\setminus \{p\}$ by taking the positive endpoints of geodesics from $p$ through $\mathscr{B}$. From \cite[Proposition 3.9]{PPS-12}, which is due to Patterson, there exists a Patterson density $\mu$ of dimension $\delta(\Gamma, V)$ on $\widetilde{M}$, i.e, a family of finite non-zero borelian measures $(\mu_x)_{x\in \widetilde{M}}$ on $\partial_\infty \widetilde{M}$, so that for any $x, x'\in \widetilde{M}$, $\gamma \in \Gamma$, 
\begin{equation}
\gamma_\ast \mu_x = \mu_{\gamma x} \quad \frac{d\mu_x}{d\mu_{x'}}(q) = \exp\left\{ \int_x^q - \int_{x'}^q V-\delta(\Gamma, V) \right\},\ q\in \partial_\infty \widetilde{M}.
\end{equation}
Additionally, the $\mu_x$'s are exactly supported on $\Lambda(\Gamma)=\partial_\infty \widetilde{M}$, so $\mu_x(\mathscr{G}) > 0$. Take $x\in \mathscr{B}$. We have
\begin{equation*}
\infty > \mu_x(\partial_\infty \widetilde{M}) = \sum_{\gamma \in \Gamma_p} \mu_x(\gamma \mathscr{G}) + \mu_x(\{p\})
\end{equation*}
But, 
\begin{equation*}
\mu_x(\gamma^{-1} \mathscr{G}) = \gamma_\ast \mu_x(\mathscr{G}) = \int_{\mathscr{G}} \exp\left\{\int_{\gamma x}^q - \int_x^q V - \delta(\Gamma, V) \right\} d\mu_x(q)
\end{equation*}
So we find
\begin{equation*}
\int_{\mathscr{G}} \sum_{\gamma \in \Gamma_p} \exp\left\{\int_{\gamma x}^q - \int_x^q V - \delta(\Gamma, V) \right\} d\mu_x (q) < \infty.
\end{equation*}

For $q\in \mathscr{G}$, let $x_q \in \mathscr{B}$ be its projection on $H(p,a_i)$. Since we have $d(x,x_q) = \mathcal{O}(1)$ --- from the choice of $\mathscr{B}$ --- we use \eqref{eq:triangle-small-side} and uniformly in $\gamma \in \Gamma_p$,
\begin{equation*}
\int_{x_q}^q - \int_x^q V - \delta(\Gamma, V) = \mathcal{O}(1) \quad ; \quad  \int_{x_q}^{\gamma x} - \int_x^{\gamma x} V - \delta(\Gamma, V) = \mathcal{O}(1)
\end{equation*}

Take $z(x')$ the intersection of the geodesic $[q, x']$ and the horosphere $H_q$ based at $q$ through $x_q$. The set of $z(x')$, $x' \in H(p,a_i)$ has to be bounded. Indeed, $H(p,a_i)$ is not compact, but the only way to go to infinity in $H(p,a_i)$ is to tend to $p$, and we find that as $x' \to p$, $z(x') \to x_q$. The geometry is described in figure \ref{fig:Poincare-series-1}.

\begin{figure}
\def\svgwidth{0.6\linewidth}
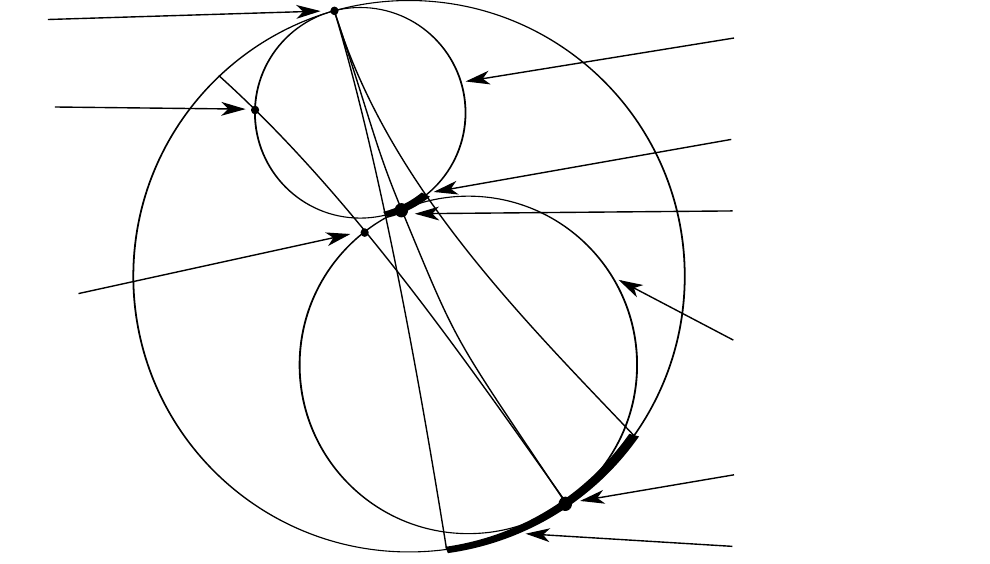
\caption{\label{fig:Poincare-series-1}} 
\end{figure}

Using again \eqref{eq:triangle-small-side},
\begin{equation*}
\int_{x_q}^q - \int_{z(x')}^q V - \delta(\Gamma, V) = \mathcal{O}(1) \quad ; \quad \int_{x'}^{z(x')} - \int_{x'}^{x_q} V - \delta(\Gamma, V) = \mathcal{O}(1),
\end{equation*}
and sum everything up (with $x'=\gamma x$)
\begin{align}
\int_{\gamma x}^q - \int_x^q V - \delta(\Gamma, V) &= \mathcal{O}(1) + \int_{\gamma x}^{z(\gamma x)} + \int_{z(\gamma x)}^q - \int_{x_q}^q V - \delta(\Gamma, V) \notag \\
							&= \mathcal{O}(1) + \int_{\gamma x}^{x_q} V - \delta(\Gamma, V)\notag \\
							&= \mathcal{O}(1) + \int_{\gamma x}^x V - \delta(\Gamma, V).\label{eq:Poincare-series-decomposition-1}
\end{align}
As a consequence, 
\begin{equation*}
P_{\Gamma_p, V}(x, \delta(\Gamma, V))\mu_{x}(\mathscr{G}) < \infty,
\end{equation*}
and since $\mu_{x}(\mathscr{G}) > 0$,
\begin{equation}
P_{\Gamma_p, V}(x, \delta(\Gamma, V)) < \infty.
\end{equation}

Since $V$ is an admissible potential, $P_{\Gamma_p, V}(x, \delta(\Gamma, V)) \asymp P_{\Gamma_p}(x, \delta(\Gamma, V) - V_\infty)$. Since $\Gamma_p$ is divergent, we deduce that $\delta(\Gamma, V) - V_\infty > d/2$.  
\end{proof}

In the article, we will need the convergence of a \emph{modified Poincar\'e series}. Take $V$ an admissible potential. For a cusp $Z_i$, take a point $p\in \Lambda^i_{par}$, and let $\pi^{a_i}_p$ be the intersection of the geodesic through $p$ and $x$ with $H(p, a_i)$. The horoballs $B(p,a_i)$, $p\in \Lambda_{par}^i$ are all pairwise disjoint. Indeed, the restriction of the projection $\widetilde{M} \to M$ to any such horoball is a universal cover of $Z_i$. This implies that for $x\in B(p,a_i)$, the part of the orbit of $x$ under $\Gamma$ that stays in $B(p,a_i)$ has to be its orbit under $\Gamma_p$.

For $x \in M$, take $\tilde{x}\in \widetilde{M}$ a lift of $x$, and define
\begin{equation*}
P_{Z_i, V} (x,s) := \sum_{ [\gamma] \in \Gamma_p \backslash\Gamma, \gamma \tilde{x} \notin B(p,a_i)} \exp\left\{ \int_{\pi^{a_i}_p(\gamma \tilde{x})}^{\gamma \tilde{x}} V-s \right\}. 
\end{equation*}
This does not depend on the choice of $\tilde{x}$. Given a point $x\in \widetilde{M}$, among a family $\{ \gamma x, [\gamma]\in \Gamma_p \backslash \Gamma\}$, there is at most one point in $B(p, a_i)$, and such a point has to be one that minimizes $G_p$. So, for a point $x\in M$, let $x_p$ be a lift minimizing $G_p$ among the lifts of $x$. For $s\in \R$, also let $G_i(x) := G_p(x_p)$. For $q\in \Lambda_{par}^j$, also let
\begin{equation}\label{eq:convergence_series}
P^{ij}_V(s):=\sum_{\Gamma_p\gamma \Gamma_q \neq \Gamma_p} \exp \left\{(V_\infty - s)\mathcal{T}(p, \gamma q) + \int_p^{\gamma q} V- V_\infty \right\},
\end{equation}
where $\mathcal{T}(p, \gamma q)$ is the sojourn time of the geodesic on $M$ that lifts to $[p, \gamma q]$. Observe that the set $\{\Gamma_p \gamma \Gamma_q \neq \Gamma_p\}$ can be identified with $\mathcal{SG}_{ij}$, from equation \eqref{eq:identification-scattered-geodesics}. The main result of this section is
\begin{lemma}\label{lemma:convergence-modified-Poincare}
The series $P_{Z_i, V}(x,s)$ and $P^{ij}_V$ converge if and only if $s> \delta(\Gamma, V)$. Additionally, when $\epsilon>0$,  there is a constant $C_\epsilon >0$ such that for $s> \delta(\Gamma, V) + \epsilon$, 
\begin{equation}
\left\| P_{Z_i,V}(x,s)\right\|_{L^2(M)} \leq C_\epsilon
\end{equation}
\end{lemma}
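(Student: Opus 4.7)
The plan is to reduce both convergence claims to that of the ordinary Poincar\'e series $P_{\Gamma,V}$, which converges iff $s>\delta(\Gamma,V)$, by using the triangle estimates of Remark \ref{remark:Topogonov} to factor inner sums over $\Gamma_p$ (and $\Gamma_q$ for $P^{ij}_V$), in the same spirit as the proof of Lemma \ref{lemma:divergence}.

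For $P_{Z_i,V}$, I first decompose
\begin{equation*}
P_{\Gamma,V}(x, s) \;=\; \sum_{[\gamma]\in \Gamma_p\backslash\Gamma}\;\sum_{\gamma'\in\Gamma_p}\exp\!\left\{\int_{\gamma'^{-1}x}^{\gamma\tilde x} V - s\right\},
\end{equation*}
using the $\Gamma$-invariance of $V$. Among the cosets $[\gamma]$, at most one has $\gamma\tilde x\in B(p,a_i)$, since the horoballs $\gamma B(p,a_i)$ on a single $\Gamma$-orbit are pairwise disjoint off the identity coset; discard that coset. For any other representative, set $z=\pi^{a_i}_p(\gamma\tilde x)\in H(p,a_i)$. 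The triangle $(\gamma'^{-1}x,\,z,\,\gamma\tilde x)$ is obtuse at $z$, because $[z,\gamma\tilde x]$ exits $B(p,a_i)$ radially from $p$ while $[z,\gamma'^{-1}x]$ reenters, so \eqref{eq:obtuse-triangle} and \eqref{eq:triangle-small-side} give
\begin{equation*}
\int_{\gamma'^{-1}x}^{\gamma\tilde x} V - s \;=\; \int_{\gamma'^{-1}x}^{z} V - s \;+\; \int_{z}^{\gamma\tilde x} V - s \;+\; \mathcal O(1),
\end{equation*}
uniformly in $\gamma'\in\Gamma_p$. Translating $z$ into a fixed fundamental domain of $\Gamma_p$ on $H(p,a_i)$ by a $\Gamma_p$-element, the inner $\gamma'$-sum becomes a $\Gamma_p$-Poincar\'e sum evaluated at $x$ times the summand of $P_{Z_i,V}(x,s)$, up to a bounded multiplicative constant. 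By Lemma \ref{lemma:divergence}, $\delta(\Gamma_p,V)=d/2+V_\infty<\delta(\Gamma,V)$, so the $\Gamma_p$-factor is finite and locally uniformly bounded for $s$ near $\delta(\Gamma,V)$. Comparing yields
\begin{equation*}
P_{\Gamma,V}(x,s) \;\asymp\; P_{\Gamma_p,V}(x,s)\cdot P_{Z_i,V}(x,s),
\end{equation*}
so $P_{Z_i,V}$ has abscissa of convergence exactly $\delta(\Gamma,V)$.

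For $P^{ij}_V$ I perform the analogous two-sided decomposition over double cosets $\Gamma_p\backslash\Gamma/\Gamma_q$, factoring out $\Gamma_p$ at the incoming end and $\Gamma_q$ at the outgoing end of each scattered geodesic. Two applications of the obtuse-triangle factorization identify the middle factor of each double-coset summand as $\exp\{(V_\infty-s)\mathcal T(p,\gamma q)+\int_p^{\gamma q} V - V_\infty\}$: by \eqref{eq:def-T} the sojourn time absorbs the two horoball-crossing lengths $\log a_i$ and $\log a_j$, and admissibility of $V$ lets us replace $V$ by $V_\infty$ inside both horoballs with only bounded error. Lemma \ref{lemma:divergence} again gives finiteness of the $\Gamma_p$- and $\Gamma_q$-factors, so $P^{ij}_V$ converges iff $s>\delta(\Gamma,V)$.

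For the $L^2(M)$ bound, note that the comparison $P_{\Gamma,V}\asymp P_{\Gamma_p,V}\cdot P_{Z_i,V}$ holds uniformly on $M$ and in $s\geq\delta(\Gamma,V)+\epsilon$. In any cusp $Z_j$, the dominant contribution to both $P_{\Gamma,V}(x,s)$ and $P_{\Gamma_p,V}(x,s)$ comes from the corresponding maximal parabolic subgroup, which contributes an asymptotic quantity of the same order in both sums, as can be computed explicitly using \eqref{eq:length-upper-space}, so their ratio $P_{Z_i,V}(x,s)$ remains $\mathcal O(1)$. On the compact part $M_0$ the ratio is manifestly bounded. Hence $P_{Z_i,V}(\cdot,s)$ is uniformly bounded on $M$ by a constant depending only on $\epsilon$, and the finite volume of $M$ yields $\|P_{Z_i,V}(\cdot,s)\|_{L^2(M)}\leq C_\epsilon$. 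The main obstacle is keeping the implicit constants in the triangle factorization uniform in $s$ in a neighborhood of $\delta(\Gamma,V)$, so that $C_\epsilon$ depends only on $\epsilon$.
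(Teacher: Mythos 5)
Your reduction to the ordinary Poincar\'e series via the obtuse-triangle estimate to obtain $P_{\Gamma,V}(x,s) \asymp P_{\Gamma_p,V}(x,s)\, P_{Z_i,V}(x,s)$ is essentially the paper's strategy (its part 2, where $P_{\Gamma,V}-P_{\Gamma_p,V}\asymp P_{Z_i,V}\cdot P_{\Gamma_p}(\cdot,s-V_\infty)$; your $P_{\Gamma_p,V}(x,s)$ is comparable to $P_{\Gamma_p}(x,s-V_\infty)$ by admissibility of $V$). The double-coset decomposition you sketch for $P^{ij}_V$ also matches the paper's part 3. Those two parts of your proposal are on the right track, modulo noting that the obtuse-angle step requires choosing the lift $\tilde x$ inside $B(p,a_i)$, and then a separate (easy) argument to show the convergence abscissa does not depend on $x$, which the paper addresses in its part 1.

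The $L^2$ bound is where the argument breaks. You claim $P_{Z_i,V}(\cdot,s)$ is \emph{pointwise} bounded on $M$ and then invoke finite volume. That pointwise boundedness is generically false. The actual cusp asymptotics (which the paper computes explicitly) are
\begin{equation*}
P_{Z_i,V}(x,s) \asymp P^{ij}_V(s)\, e^{(s - V_\infty - d)\, G_q(x_q)} = P^{ij}_V(s)\, y^{\,d + V_\infty - s}
\end{equation*}
for $x$ at height $y$ in cusp $Z_j$. This is bounded only when $s \geq V_\infty + d$, whereas Lemma \ref{lemma:divergence} only guarantees $\delta(\Gamma,V) > V_\infty + d/2$. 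For $s$ in the range $(\delta(\Gamma,V), V_\infty + d)$, which can be nonempty, $P_{Z_i,V}(x,s)$ grows polynomially in $y$ as $x$ escapes into a cusp. Your heuristic that "the dominant contribution to both $P_{\Gamma,V}$ and $P_{\Gamma_p,V}$ comes from the corresponding maximal parabolic subgroup" is also unsupported: for $x$ deep in $Z_j$, the dominant contribution to $P_{\Gamma,V}(x,s)$ comes from $\Gamma_q$ (the parabolic subgroup fixing $q$ representing $Z_j$), not from $\Gamma_p$, and these two denominators scale differently with $y$. The correct route, as in the paper, is to extract the factor $P^{ij}_V(s)$ from the double-coset sum, recognize the remaining $\Gamma_q$-sum as a Riemann sum controlled by \eqref{eq:length-upper-space}, obtain the precise growth rate $y^{d+V_\infty-s}$, and then check that it is $L^2$ against the cusp volume form $y^{-d-1}\,dy\,d\theta$, which holds exactly when $s > V_\infty + d/2$. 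The $L^2$-boundedness is a balance between the function's growth and the measure's decay, not pointwise boundedness.
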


Our proof is inspired by \cite{Belabas-Hersonsky-Paulin}, and we generalize their Theorem 1.1. One can also see the article \cite{Parkkonen-Paulin-13}, or the proposition 3 in \cite{Paulin-13}. For two real valued functions $f$ and $g$, we write $f \asymp g$ when there is a constant $C>0$ with $C g \leq f \leq g / C$. In the following, when we use that notation, we let the constant $C$ depend on $s$, but not on $x,\ \gamma,\ p$. We fix a cusp $Z_i$, a representing parabolic point $p\in \Lambda^i_{par}$.

\begin{proof}

The proof is divided into 3 parts. First, we compare the values of terms of the sum for different $x$'s, to check that the convergence does not depend on $x$ indeed. Then, we study the sum for some well chosen $x$, to find the convergence exponent. At last, we turn to asymptotics in cusps. We let $P^\ast$ be the series where we have not excluded $\gamma x_q \in B(p, a_i)$ from the sum.

\textbf{1.} Take $x, x'$ two points in $M$, at distance $D>0$, and two lifts $\tilde{x}$ and $\tilde{x}'$ such that $d(\tilde{x}, \tilde{x}') = D$.

Take $\gamma \in \Gamma$. Assume that $G_p(\gamma \tilde{x}') \geq G_p(\gamma \tilde{x})$. Then the projection $x_\gamma^1$ of $\gamma \tilde{x}'$ on the horoball $B(p, G_p( \gamma \tilde{x}))$ is at distance $\mathcal{O}(D+1)$ from $\gamma \tilde{x}$. This is a simple consequence of equation \eqref{eq:obtuse-triangle} for the triangle with vertices $\gamma \tilde{x},\gamma \tilde{x}', x_\gamma^1$. Write
\begin{equation*}
\int_{\pi_p^{a_i}(\gamma \tilde{x}')}^{\gamma \tilde{x}'} V - s - \int_{\pi_p^{a_i}(\gamma \tilde{x})}^{\gamma \tilde{x}} V - s = \int_{x_\gamma^1}^{\gamma \tilde{x}'} V - s + \int_{\pi_p^{a_i}(\gamma \tilde{x}')}^{x_\gamma^1} V - \int_{\pi_p^{a_i}(\gamma \tilde{x})}^{\gamma \tilde{x}} V.
\end{equation*}
Since $V$ is H\"older, and bounded, we deduce that
\begin{equation*}
\int_{\pi_p^{a_i}(\gamma \tilde{x}')}^{\gamma \tilde{x}'} V - s - \int_{\pi_p^{a_i}(\gamma \tilde{x})}^{\gamma \tilde{x}} V - s = \mathcal{O}(D + 1)\left\{ (1 + |s|) + \int_0^\infty (e^{-k_{min} t})^{\mu} dt \right\}
\end{equation*}
where $\mu$ is the H\"older exponent of $V$. The constants in the estimates do not depend on $x$ and $x'$. We have used that the geodesics joining $\gamma \tilde{x}$, $\pi_p^{a_i}\gamma \tilde{x}$ and $x_\gamma^1$, $\pi_p^{a_i}\gamma \tilde{x}'$ are on the same strong stable manifold. We deduce that for some constant $C>0$,
\begin{equation}
e^{-C(D|s|+ 1)} \leq \frac{P^\ast(x', s)}{P^\ast(x, s)} \leq e^{C (D|s|+ 1)} \quad x,x'\in M, d(x,x') = D.
\end{equation}

\textbf{2.} Take now a point $x\in M$ so that $ x_p \in H(p, a_i)\subset B(p, a_i)$. We claim that for all $x'\in H(p,a_i)$,
\begin{equation}\label{eq:Poincare-series-decomposition-2}
\int_{x'}^{\gamma  x_p } V-s = (V_\infty - s)d(x',\pi_p^{a_i}(\gamma x_p)) + \mathcal{O}(1) + \int_{\pi_p^{a_i}(\gamma x_p)}^{\gamma  x_p } V-s.
\end{equation}
The remainder being bounded independently from $ x_p $ and $\gamma$. Let us assume that this holds for now. Then, we write
\begin{align*}
P_{\Gamma, V}( x_p ,s) - P_{\Gamma_p, V}( x_p ,s) &= \sum_{\Gamma_p \gamma \neq \Gamma_p} \sum_{\alpha \in \Gamma_p} \exp\left\{\int_{\alpha  x_p }^{\gamma  x_p } V-s\right\}, \\
				& \asymp \sum_{\Gamma_p \gamma \neq \Gamma_p} \sum_{\alpha \in \Gamma_p} \exp\left\{(V_\infty - s) d(\alpha x_p, \pi_p^{a_i}(\gamma x_p)) + \int_{\pi_p^{a_i}(\gamma x_p)}^{\gamma  x_p } V-s\right\} \\
				& \asymp P_{Z_i, V}( x_p ,s) P_{\Gamma_p}( x_p ,s- V_\infty).
\end{align*}
Hence
\begin{equation*}
P_{Z_i, V}( x_p ,s)\asymp \frac{P_{\Gamma, V}( x_p ,s) - P_{\Gamma_p, V}( x_p ,s)}{ P_{\Gamma_p}( x_p ,s- V_\infty)}.
\end{equation*}
But from lemma \ref{lemma:divergence}, we know that $\delta(\Gamma, V) > \delta(\Gamma_p, V)$.

The proof of \ref{eq:Poincare-series-decomposition-2} is left as an exercise, very similar to the proof of \ref{eq:Poincare-series-decomposition-1} --- replacing $q$ by $\gamma  x_p $, $\gamma x$ by $\pi_p^{a_i}(\gamma x_p)$ and $x$ by $x'$.

\textbf{3.} We turn to asymptotics in the cusps. Take $x\in Z_j$ and $q\in \Lambda^j_{par}$ (if $i=j$, take $p=q$). Let $x_q$ minimize $G_q$ among the lifts of $x$. Observe that the map $( \Gamma_p \gamma \Gamma_q \neq \Gamma_p, \alpha \in \Gamma_q ) \mapsto \Gamma_p \gamma \alpha$ is a bijection onto $\Gamma_p \backslash \Gamma$ if $i\neq j$, and $\Gamma_p \backslash (\Gamma - \Gamma_p)$ if $i=j$. We hence rewrite
\begin{equation*}
P_{Z_i, V} (x,s) = \sum_{\Gamma_p\gamma \Gamma_q \neq \Gamma_p} \sum_{\alpha \in \Gamma_q} \exp\left\{\int_{\pi^{a_i}_p(\gamma \alpha x_q)}^{\gamma \alpha x_q} V-s\right\}.
\end{equation*}

Consider $H_q$ the horosphere based at $q$, through $x_q$. Let $z_\gamma$ (resp. $z_\gamma'$) be the point of intersection of the geodesic $[p, \gamma q]$ with $\gamma H_q$ (resp. $H(p,a_i)$). From \eqref{eq:Poincare-series-decomposition-1}, we have
\begin{equation*}
\int_{\pi^{a_i}_p(\gamma \alpha x_q)}^{\gamma \alpha x_q} V-s = \mathcal{O}(1) + \int_p^{z_\gamma} + \int_{z_\gamma}^{\gamma\alpha x_q} - \int_p^{\pi_p^{a_i}(\gamma \alpha x_q)} V-s .
\end{equation*}

However, the distance between $z_\gamma'$ and $\pi_p^{a_i}(\gamma \alpha x_q)$ is uniformly bounded. This is a direct consequence of lemma \ref{lemma:point-entry-compact}. Hence
\begin{equation*}
\int_{z_\gamma'}^{p} -\int_{\pi_p^{a_i}(\gamma \alpha x_q)}^p = \mathcal{O}(1),
\end{equation*}
and
\begin{equation*}
\int_{\pi^{a_i}_p(\gamma \alpha x_q)}^{\gamma \alpha x_q} V-s = \left\{\int_p^{\gamma q} V- V_\infty\right\} + (V_\infty - s)\left(\mathcal{T}(p, \gamma q) - G_q(x_q) + d(\gamma^{-1} z_\gamma, \alpha x_q)\right) + \mathcal{O}(1).
\end{equation*}
where $\mathcal{T}(p,\gamma q)$ is the sojourn time for the geodesic $[p, \gamma q]$. It follows that
\begin{equation*}
\begin{split}
P_{Z_i, V}(x,s) \asymp \sum_{\Gamma_p\gamma \Gamma_q \neq \Gamma_p} \exp &\left\{ (V_\infty - s)\mathcal{T}(p, \gamma q) + \int_p^{\gamma q} V- V_\infty\right\}  \\
	&\times\sum_{\alpha \in \Gamma_q} \exp\left\{(V_\infty-s)( - G_q(x_q) + d(\gamma^{-1} z_\gamma, \alpha x_q)) \right\}
\end{split}
\end{equation*}

In the RHS, the first term does not depend on $x$; we recognize $P_V^{ij}(s)$. The second is related to $P_{\Gamma_q}(x_q)$. We can see it as a Riemann sum as $x_q \to q$. Indeed, $\Gamma_q \simeq \Z^d$, and we can write explicitely the second term as
\begin{equation}
e^{(s-V_\infty) G_q(x_q)} \sum_{\theta \in \Lambda_i} \exp \left\{2(V_\infty - s) \argsh \frac{|\theta - \theta_0|}{2 e^{-G_q(x_q)}} \right\}
\end{equation}
As $x_q \to q$, $y= e^{-G_q(x_q)} \to +\infty$, and we can see this as a Riemann sum for the function $f=\exp \{ 2 (V_\infty - s) \argsh \}$ for the parameter $2y$. It should be equivalent to $(2y)^d \int_{\R^d} f$. However $f$ is integrable if and only if $s-V_\infty > d/2$. As a result, we find that
\begin{equation*}
\sum_{\alpha \in \Gamma_q} \exp\left\{(V_\infty-s)( - G_q(x_q) + d(\gamma^{-1} z_\gamma, \alpha x_q)) \right\} \asymp e^{(s-V_\infty -d) G_q(x_q)}, \quad s > V_\infty + d/2.
\end{equation*}
It is easy to check that the $L^2$ norm of this is finite whenever $s \geq V_\infty + d/2 + \epsilon$. The proof of the lemma is complete when we observe that the $L^2$ norm decreases when $\Re s$ increases.
\end{proof}

\section{Parametrix for the Eisenstein functions}\label{sec:param-E}

In the case of constant curvature, the universal cover $\widetilde{M}$ is the real hyperbolic space $\Hh^{d+1}$. On it, there is the \emph{Poisson kernel} $P(x, p, s)$ that associates a function on the boundary $f(p)$ with a function on $\Hh^{d+1}$, $u(x)$ such that
\begin{equation*}
(-\Delta -s(d-s))u(x) =0 \quad u(x) = \int P(x,p,s) f(p) dp.
\end{equation*}
In addition, we require that $u$ corresponds to the superposition of outgoing stationary plane waves at frequency $s$, with weight $f(p)$ in the direction $p$. When the curvature is variable, one cannot build such a kernel anymore, because the geometry of the space near the boundary is quite singular. In other words, the metric structure on the boundary is not differentiable, only H\"older. Hence, no satisfactory theory of distributions is available. However, in the special case of parabolic points that correspond to hyperbolic cusps, the fact that small enough horoballs have constant curvature enables us to construct an \emph{approximate} Poisson kernel for $p\in \Lambda_{par}$. 

Taking the half space model for $\Hh^{d+1}$, the Poisson kernel for the point $p= \infty$ is $P = y^s$, so one can rewrite formula \eqref{eq:Expansion_Eisenstein_constant_curvature} as
\begin{equation*}
E_i(s,x) = \sum_{[\gamma] \in \Gamma_p \backslash \Gamma} P(\gamma x, p, s).
\end{equation*}

This is exactly the type of expression we are looking for. In the first subsection, we introduce some notations. In the second we recall some facts on Jacobi fields that we will need. Then we build the approximate Poisson kernel, and later, we prove that summing over $p\in \Lambda^i_{par}$ gives a good approximation of $E_i$.

\subsection{Some more notations}\label{section:some_more_notations}

Fix some $Z_i$ and let $p\in \Lambda^i_{par}$ be a parabolic point. We denote by $\varphi^p_t$ the flow on $ \widetilde{M} $ generated by $\nabla G_p$. It is conjugated to the geodesic flow on $W^{u0}(p)$ by the projection $\pi : T^\ast  \widetilde{M}  \to  \widetilde{M} $. We have 
\begin{equation*}
\frac{\mathrm{d}}{\mathrm{d}t} \Jac \varphi^p_t |_{t=0} = \Tr \nabla^2 G_p = \Delta G_p,
\end{equation*}
so that the Jacobian is
\begin{equation}\label{eq:integral-expression-jacobian}
\Jac \varphi^p_t  = \exp\left\{ \int_0^t \Delta G_p \circ \varphi^p_\tau \mathrm{d}\tau \right\}.
\end{equation}
Thanks to the rigid description in the cusps, we have 
\begin{equation*}
G_p \leq -\log a_i \Leftrightarrow \text{ we are above cusp $Z_i$ and } G_p = -\log y_i, \text{ for all $p \in \Lambda^i_{par}$.}
\end{equation*}
In that case, we can compute $\Delta G_p = d$, and it makes sense to define a \emph{twisted} Jacobian:
\begin{equation}\label{eq:def-J}
\tilde{J}_p(x) := \lim_{t\to +\infty} \sqrt{\Jac\varphi^p_{-t}  e^{t d }} = \sqrt{ \Jac \varphi^p_{-t} e^{t d}}_{t \geq G_p(x) + \log a_i}, \text{ for }p\in \Lambda^i_{par}.
\end{equation}
This $\tilde{J}_p$ is constant equal to $1$ in the horoball $B(p, a_i)$. It is useful to define 
\begin{equation}\label{eq:def-b-i}
b_i := \inf \{y>0,\ B(p, y) \text{ has constant curvature} \}.
\end{equation}
We have $b_i \leq a_i$, and $\tilde{J}$ equals $1$ on $B(p, b_i)$. We also let 
\begin{equation}\label{eq:def-F}
F_p(x):= \log \tilde{J}_p(x).
\end{equation}
Recall the curvature of $M$ is pinched between $-k^2_{max} \leq -1 \leq -k^2_{min}<0$. Then by Rauch's comparison theorem, \cite[Theorem 1.28]{Cheeger-Ebin},
\begin{equation}\label{eq:Bound_Jacobian}
d(1- k_{max}) \leq \frac{2 F_p}{(G_p + \log b_i)^+} \leq d(1-k_{min}).
\end{equation}
What is more, by \ref{lemma:Regularity_Jacobian}, $\nabla^n F_p$ is bounded for $n\geq 1$, because $\nabla G_p$ is in $\mathscr{C}^\infty( \widetilde{M} )$. 
 
On the other hand, the Unstable Jacobian $F^{su}$ is the H\"older function on $S M$ defined by
\begin{equation}\label{eq:def-unstable-jacobian}
F^{su}(x,v) := - \frac{d}{dt}_{| t=0} \det \left[(d\varphi_t)_{| E^u(x,v)}\right] < 0.
\end{equation}
The fact that it is H\"older is a consequence of the H\"older regularity of $E^u$ --- see \cite[Theorem 7.1]{PPS-12}. In what follows, we will be interested by the potential
\begin{equation}\label{eq:def-V_0}
V_0 = \frac{1}{2} F^{su} + \frac{d}{2}.
\end{equation}
We let $\delta_g = \delta(\Gamma, V_0)$. This is the relevant abscissa of convergence of theorem \ref{theorem:Dirichlet-series-expansion-vertical-strip} in the introduction, as we will see.

	\subsection{Unstable Jacobi fields}\label{section:Jacobi-fields}

We want to relate $V_0$ and $F_p$. We have to make a digression, and recall some facts on Jacobi fields. Take a geodesic $x(t)$, and a Jacobi field $J$ along $x(t)$, orthogonal to $x'(t)$. By parallel transport, one can reduce $J$ to some function of time valued in $T_{x(0)}M$. If one also uses parallel transport for the curvature tensor, we get the equation
\begin{equation}\label{eq:Jacobi_fields}
J''(t) + K(t) J(t) = 0.
\end{equation}
if $x(t)$ lives in constant curvature $-1$, $K$ is the constant matrix $- \mathbf{1}$. If $J(0) = J'(0)$, then $J(t) = e^t J(0)$, and conversely, if $J(0) = - J'(0)$, $J(t) = e^{-t} J(0)$. 

For $v\in T M$, denote by $v^\perp$ the space of vectors in $T_x M$ orthogonal to $v$. Recall that $H$ and $V$ are the horizontal and vertical subspaces introduced in remark \ref{remark:measuring_regularity}. Then we can identify $T_v S M \simeq (\R v \oplus v^\perp) \oplus v^\perp$. In this identification, the first term $\R v \oplus v^\perp$ is $H$. The second term $v^\perp$ is $V \cap T_v SM$. In this notation, $\R v$ is the direction of the geodesic flow, and $v$ its vector. 

This identification is consistent with Jacobi fields in the sense that if
\begin{equation*}
\mathrm{d}\varphi_t .(l, v_1,v_2) = (l(t), v_1(t), v_2(t)),
\end{equation*}
then $l(t) = l$ for all $t$, $v_1(t)$ is a Jacobi field orthogonal to $v(t) =x'(t)$, and $v_2(t)$ is its covariant derivative (also orthogonal to $v(t)$).

An \emph{unstable Jacobi field} $\J^u(t)$ along $x(t)$ is a $d\times d$ matrix-valued solution of \ref{eq:Jacobi_fields} along $x(t)$ that is invertible for all time, and that goes to $0$ as $t\to -\infty$ --- it just gathers a basis of solutions. Similarly, one can define the \emph{stable} Jacobi fields. Such fields always exist; they never vanish, nor does their covariant derivative --- see \cite{Ruggiero-07}. We denote by $\J_t^u(s)$ the unstable Jacobi field that equals $\mathbf{1}$ for $s=t$ --- given a geodesic $x(t)$. Actually, $s\mapsto \J_t^u(t+s)$ only depends on $v=(x(t), x'(t))\in S M$. We will write it $s\mapsto \J_v^u(s)$.

From the identification with $T SM$, we find that vectors in $E^u$ take the form $(\J^u(t)w, {\J^{u}}'(t)w)$, whence we deduce that 
\begin{equation}\label{eq:coordinates-E^u}
E_v^u = \{ (w, {\J_v^u}'(0)w) | w \perp v \}.
\end{equation}
The matrix ${\J_v^u}'(0)$ only depends on $v$, we denote it by $\U_v$. Similarly, we define $\Ss_v$ for the \emph{stable} Jacobi fields. They satisfy the Ricatti equation (along a geodesic $v(t)$):
\begin{equation*}
\U' + \U^2 + K = 0.
\end{equation*}
They take values in symmetric matrices (with respect to the metric), which is equivalent to saying that the stable and unstable directions are Lagrangians. Given a geodesic curve $x(t)$, $\J_u(t)$ and $\J_s(t)$ two Jacobi fields along it, we can write $\U = (\J_u^{-1})^T (\J_u')^T$, and find that
\begin{equation}\label{eq:Wronskian}
\frac{d}{dt} \left\{ \J_u^T (\U - \mathbb{S}) \J_s \right\} = 0.
\end{equation}
This is a Wronskian identity. We can also compute
\begin{equation}\label{eq:unstable_jacobian_Jacobi_fields}
\det {d \varphi_t}_{|E^u(v)} = \det \J_v^u(t) \sqrt{\frac{\det \left(\mathbf{1} + \U_{\varphi_t(v)}^2\right)}{\det \left( \mathbf{1} + \U_{v}^2\right)}}.
\end{equation}
We have a map $i^u : w \in H(v) \mapsto (w, \U_v w) \in E^u(v)$ from the horizontal subspace to the unstable one. If one considers the metric $ds^2_u$ obtained on $E^u$ by restriction of the Sasaki metric in $T SM$, this gives a structure of Euclidean bundle to $E^u$ over $SM$. 

\begin{lemma}
The matrix $\mathbf{1} + \U_v^2$ is the matrix of the metric $i^\ast ds^2_u$ on $H$. This is bounded uniformly on $SM$.
\end{lemma}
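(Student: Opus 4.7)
The plan is to split the statement into two parts: first identify the pullback metric, then establish uniform boundedness from the pinched curvature.

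For the first part, I would unwind the definitions. Take $w \in H(v)$. By construction, $i^u(w) = (w, \U_v w)$, where the first entry lies in $H$ and the second in $V \cap T_v SM$. The Sasaki metric is defined so that the decomposition $T_v SM = H \oplus (V \cap T_v SM)$ is orthogonal, and both summands are isometric to $T_{\pi v}\widetilde{M}$ via the canonical identifications. Therefore
\begin{equation*}
\|i^u(w)\|_{ds_u^2}^2 = \langle w, w\rangle_g + \langle \U_v w, \U_v w\rangle_g = \langle w, (\mathbf{1} + \U_v^T \U_v) w\rangle_g.
\end{equation*}
Now I use the fact, recalled just before the lemma, that $E^u$ is Lagrangian, which is equivalent to $\U_v$ being symmetric with respect to the metric. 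Hence $\U_v^T \U_v = \U_v^2$ and the matrix of $i^\ast ds_u^2$ in the given frame is indeed $\mathbf{1} + \U_v^2$.

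For the uniform boundedness, it suffices to bound $\U_v$ uniformly on $SM$. The key input is the Riccati equation $\U' + \U^2 + K = 0$ along each geodesic, together with the pinched curvature hypothesis $-k_{max}^2 \leq K \leq -k_{min}^2 < 0$. I would invoke the standard comparison argument: one compares the matrix Riccati solution $\U_v$ with the scalar constant solutions $u = k_{min}$ and $u = k_{max}$ for the constant curvature model Riccati equations $u' + u^2 - k^2 = 0$. Since $\U_v$ is the unique symmetric solution defined for all backwards time (the unstable one), the comparison yields
\begin{equation*}
k_{min}\,\mathbf{1} \leq \U_v \leq k_{max}\,\mathbf{1}
\end{equation*}
in the sense of symmetric operators, uniformly in $v \in S\widetilde{M}$. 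Passing to the $\Gamma$-invariant quotient $SM$ preserves the bound, and one concludes that $\|\mathbf{1} + \U_v^2\| \leq 1 + k_{max}^2$.

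I do not expect a real obstacle here: the first part is essentially bookkeeping with the Sasaki metric once one records that $\U_v$ is symmetric, and the second is the textbook comparison for the Riccati equation in pinched negative curvature. The only place to be careful is to invoke symmetry of $\U_v$ for the identification $\U_v^T \U_v = \U_v^2$, and to use the Lagrangian property exactly as stated in the paragraph surrounding the Wronskian identity \eqref{eq:Wronskian}.
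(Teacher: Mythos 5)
Your proof is correct, but it takes a genuinely different route from the paper for the uniform boundedness of $\U_v$. You invoke the matrix Riccati comparison theorem directly: bounding $K$ between $-k_{max}^2\mathbf{1}$ and $-k_{min}^2\mathbf{1}$, the unstable Riccati solution is squeezed between the constant solutions of the model equations, giving $k_{min}\mathbf{1} \leq \U_v \leq k_{max}\mathbf{1}$ globally on $S\widetilde{M}$, hence the explicit bound $1 + k_{max}^2$. The paper instead argues by contradiction: a sequence $v$ with $\|\U_v\|\to\infty$ either has an accumulation point in $SM$ — impossible since $E^u$ and $H$ stay transverse (no conjugate points) — or escapes into a cusp, where the constant curvature $-1$ forces the explicit form $\U_v = \mathbf{1} + \mathcal{O}(e^{-t})$. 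Your approach is more direct and quantitative and avoids splitting into compact part and cusp, whereas the paper's argument, while more qualitative in the compact region, sets up the explicit cusp asymptotics $i^\ast ds^2_u = 2\cdot\mathbf{1} + \mathcal{O}(1/y)$ that it reuses elsewhere (for instance in the proof of Lemma \ref{lemma:equivalence-Jacobian}). Your identification of $\mathbf{1} + \U_v^2$ as the pulled-back Sasaki metric, using symmetry of $\U_v$ to write $\U_v^T\U_v = \U_v^2$, is the right bookkeeping and is more explicit than the paper, which treats it as immediate from the definitions.
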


\begin{proof}
this metric is always $\geq \mathbf{1}$ --- here, $\mathbf{1}$ refers to the metric on $H$, i.e, the metric on $TM$. The only way it can blow up would be that for a sequence of $v$, $\tilde{v} \perp v$, $\U_v\tilde{v} \to \infty$. If $v_\infty$ was a point of accumulation of $v$ in $\widetilde{M}$, that implies that $E^u$ and $H$ are not transverse at $v_\infty$. That is not possible since there are no conjugate points in strictly negative curvature. We deduce that $\pi v \in M$ has to escape in a cusp. 

However, in the cusp, the curvature $K$ is constant with value $-\mathbf{1}$. Hence, unstable Jacobi fields in the cusp write as $A e^t + B e^{-t}$, where $A$ and $B$ are constant matrices along the orbit. Then $\U_v = \mathbf{1} + \mathcal{O}(e^{-t})$ as the point $v$ travels along a trajectory $\varphi_t$ that remains in a cusp. In particular, $i^\ast ds^2_u = 2.\mathbf{1} + \mathcal{O}(1/y)$ for points of height $y$ in a cusp.
\end{proof}

In this context, from the definition, we find that for $x\in \widetilde{M} $, 
\begin{equation}\label{eq:J_and_Jacobi_fields}
\tilde{J}^2_p(x) = e^{td} \det \J_{(x, \nabla G_p(x))}^u(-t), \text{ for } t\geq G_p(x) + \log a_i.
\end{equation}

As a consequence, 
\begin{lemma}\label{lemma:equivalence-Jacobian}
For $x\in  \widetilde{M} $, and $t\in \R$, 
\begin{equation*}
\int_x^{\varphi^p_t(x)} V_0 = F_p(\varphi^p_t(x)) - F_p(x) + \mathcal{O}(1).
\end{equation*}
What is more, $V_0$ is an admissible potential.
\end{lemma}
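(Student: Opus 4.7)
My plan is to express both sides of the claimed identity in terms of a single Jacobi-field quantity and observe that they match. Parametrize the horocyclic geodesic by $v(\tau) := (\varphi_\tau^p(x), \nabla G_p(\varphi_\tau^p(x))) \in W^{u0}(p)$, so that $\varphi_\tau(v(0)) = v(\tau)$. Equation \eqref{eq:unstable_jacobian_Jacobi_fields} factors as $\det d\varphi_t|_{E^u(v)} = \det\J_v^u(t) \cdot g(\varphi_t v)/g(v)$ with $g(v) := \sqrt{\det(\mathbf{1} + \U_v^2)}$ bounded on $SM$ by the lemma proved just before. Differentiating at $t = 0$ gives the key pointwise identity
\begin{equation*}
F^{su}(v) = -\Tr \U_v - (X\log g)(v),
\end{equation*}
where $X$ is the geodesic vector field.

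For the integral $\int_x^{\varphi_s^p(x)} V_0$, integrating along $v(\tau)$ for $\tau \in [0,s]$ and using $(\J_{v(0)}^u)'(\tau) = \U_{v(\tau)}\J_{v(0)}^u(\tau)$ together with cyclicity of the trace to get $\tfrac{d}{d\tau}\log\det\J_{v(0)}^u(\tau) = \Tr\U_{v(\tau)}$, one obtains
\begin{equation*}
\int_0^s V_0(v(\tau))\,d\tau = \tfrac{sd}{2} - \tfrac12 \log\det\J_{v(0)}^u(s) + \mathcal{O}(1),
\end{equation*}
the $\mathcal{O}(1)$ coming from the bounded boundary term in $\log g$. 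For the $F_p$ difference, apply \eqref{eq:J_and_Jacobi_fields} at $x$ and at $\varphi_s^p(x)$ with a common large time $t$, then the shift relation $\J_{v(s)}^u(-t) = \J_{v(0)}^u(s-t)\J_{v(0)}^u(s)^{-1}$ gives
\begin{equation*}
2[F_p(\varphi_s^p(x)) - F_p(x)] = \log\det\J_{v(0)}^u(s-t) - \log\det\J_{v(0)}^u(-t) - \log\det\J_{v(0)}^u(s).
\end{equation*}
Choosing $t$ so large that $v(-t)$ and $v(s-t)$ both lie in the constant curvature $-1$ horoball, the unstable Jacobi field in that region reduces to $\J_{v(0)}^u(\tau) = A e^\tau$ (the only solution of $\J''=\J$ vanishing at $-\infty$), so the first two log-determinants cancel exactly to $sd$. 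The two sides therefore agree up to $\mathcal{O}(1)$.

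For admissibility of $V_0$, Hölder regularity and $\Gamma$-invariance come directly from those of $F^{su}$ and of the Anosov splitting. Reversibility uses $F^{su}(\imath v) = -F^{ss}(v)$ (from $d\imath\circ d\varphi_t = d\varphi_{-t}\circ d\imath$ and $d\imath: E^u \leftrightarrow E^s$) combined with $F^{su} + F^{ss} = -X\log\lambda$, where $\lambda(v)$ is the Sasaki volume of a basis of $\R X \oplus E^u \oplus E^s$; preservation of Liouville measure gives this identity, and boundedness of $\log\lambda$ on $SM$ (verified by explicit computation in the cusps, where everything is constant curvature) makes $V_0 - \imath V_0$ a coboundary with bounded primitive. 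For the exponential convergence, $V_\infty = 0$: the Riccati equation $\U' = \mathbf{1} - \U^2$ in constant curvature has $\U = \mathbf{1}$ as an attracting fixed point with linearization $-2$, so $\U_{\varphi_t v} = \mathbf{1} + \mathcal{O}(e^{-2t})$ whenever the trajectory stays in the curvature $-1$ region, whence $V_0(\varphi_t v) = \tfrac12(d - \Tr \U_{\varphi_t v}) - \tfrac12(X\log g)(\varphi_t v) = \mathcal{O}(e^{-2t})$. The only delicate step in the whole argument is the cancellation inside the cusp; once one recognizes that $\J_{v(0)}^u = A e^\tau$ forces the exact identity $\log\det\J^u(s-t) - \log\det\J^u(-t) = sd$ on the relevant range, the rest is bookkeeping.
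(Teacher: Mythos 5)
Your proposal is correct and implements essentially the same strategy as the paper: the first part expands the one-line argument the paper gives (``comes directly from \eqref{eq:J_and_Jacobi_fields} and \eqref{eq:unstable_jacobian_Jacobi_fields}'') by combining the unstable-Jacobi-field expression for $\det d\varphi_t|_{E^u}$, the boundedness of $\sqrt{\det(\mathbf{1}+\U^2)}$, and the exact cancellation $\J^u(\tau)=Ae^\tau$ once the trajectory enters the constant-curvature horoball; the admissibility part likewise uses the cusp Riccati asymptotics for exponential decay, in fact with the sharper rate $e^{-2t}$. The only place you deviate is in the reversibility step, where you invoke Liouville measure preservation to produce the coboundary $F^{su}-F^{ss}=X(\text{bounded})$ rather than the paper's explicit Wronskian identity \eqref{eq:Wronskian} with the function $\sqrt{\det(1+\U^2)(1+\Ss^2)}/\det(\U-\Ss)$; both yield the same conclusion and are really the same mechanism in different clothing. (Your sign conventions on $F^{ss}$ and on the coboundary are internally flipped relative to the paper's, but consistently so, and do not affect the argument.)
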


\begin{proof}
The first part of the lemma comes directly from equations \eqref{eq:J_and_Jacobi_fields} and \eqref{eq:unstable_jacobian_Jacobi_fields}, and the observation just afterward.

To prove the second part, it suffices to prove that $F^{su}$ is an admissible potential. Consider a point $v\in TSM$ so that $\varphi_t(v)$ remains in a cusp for times $t\in [0, T]$. Taking the Jacobi fields starting from $v$ along its orbit, for $t\in [0, T]$, we find
\begin{equation}\label{eq:formula-U}
\U_{\varphi_t v} = (A e^t - B e^{-t})(Ae^t + B e^{-t})^{-1} = \mathbf{1} + \mathcal{O}(e^{-t}),
\end{equation}
and 
\begin{equation*}
F^{su} = - \frac{d}{ds}_{|s=0}\left\{ \det {\J_{\varphi_t(v)}^u}(s) \sqrt{\frac{\det \mathbf{1} + \mathcal{O}(e^{-t})}{\det \mathbf{1} + \mathcal{O}(e^{-t-s})}}\right\} = - d + \mathcal{O}(e^{-t}).
\end{equation*}

The last thing we have to check is that $F^{su}$ is reversible. However, $\imath F^{su}$ is the strong \emph{Stable} Jacobian $F^{ss}$
\begin{equation}
F^{ss} = \frac{d}{dt}_{|t=0} \log \det {d\varphi_t}_{|E^s(x,v)}.
\end{equation}
From equation \eqref{eq:unstable_jacobian_Jacobi_fields}, and the Wronskian identity \eqref{eq:Wronskian}, we find that
\begin{equation}
\det {d\varphi_t}_{|E^s(x,v)} \det {d\varphi_t}_{|E^u(x,v)} = \frac{\det \U_v - \Ss_v}{\det \U_{\varphi_t(v)} - \Ss_{\varphi_t(v)}} \sqrt{\frac{\det \left(1+\U_{\varphi_t(v)}^2\right)\left(1+\Ss_{\varphi_t(v)}^2\right)}{\det \left(1+\U_v^2\right)\left(1+\Ss_v^2\right)}}.
\end{equation}
Since the function
\begin{equation}
\frac{\sqrt{\det (1+\U^2)(1+\Ss^2)}}{\det \U - \Ss}
\end{equation}
is well defined on $S\widetilde{M}$, H\"older continuous, and bounded, $F^{su}$ is reversible. 
\end{proof}

%In what follows, we will only use unstable and stable Jacobi fields along the orbits of $\varphi^p_t$. We will thus write for $x\in \widetilde{M}$, $\J^u_x(t)$ the unstable Jacobi field along $\varphi^p_t(x)$, with value $\mathbf{1}$ at $x$.

	\subsection{On the universal cover}

In this section, we fix $p\in \Lambda_{par}$, and we omit the dependency on $p$; it shall be restored afterwards. We use notations introduced in section \ref{section:some_more_notations}. We will use the WKB Ansatz to find our approximate Poisson kernel. Consider a formal series of functions on $ \widetilde{M} $, 
\begin{equation*}
f(x)=\sum_{n\geq 0} s^{-n} f_n,
\end{equation*}
with $s\in \C$ and $f_0=1$, and compute
\begin{equation*}
(-\Delta - s(d-s))[e^{-sG} \tilde{J} f ] = e^{-sG}\left[ s \left( 2\nabla G. \nabla(\tilde{J} f) + \tilde{J} f \Delta G  -  \tilde{J} f d \right) - \Delta (\tilde{J} f)\right],
\end{equation*}
where we have used that $G$ satisfies the eikonal equation $|\nabla G|^2 = 1$. If we expand the formal series, we find that this expression (formally) vanishes if for all $k>0$,
\begin{equation*}
2\tilde{J} \nabla G. \nabla f_n  = \Delta (\tilde{J} f_{n-1}).
\end{equation*}
Indeed,
\begin{equation*}
2 \nabla G . \nabla \tilde{J} = \tilde{J}( d - \Delta G).
\end{equation*}
We can rewrite those equations in terms of $F=\log \tilde{J}$ :
\begin{equation}\label{eq:def-Q}
2 \nabla G.\nabla f_n = Q f_{n-1} \text{ where } Q f(x) = \Delta f  + 2\nabla F. \nabla f  + ( |\nabla F|^2+ \Delta F) f.
\end{equation}
These are transport equations, with solutions :
\begin{equation}\label{eq:def-f_n}
f_n = \frac{1}{2} \int_{-\infty}^0 (Q f_{n-1})\circ \varphi^p_\tau \mathrm{d}\tau 
\end{equation}
Remark that on $\{ G \leq -\log b\}$, from the definition \eqref{eq:def-b-i} $F$ vanishes, and so does $Qf_0$. Hence all $f_n$'s but $f_0$ vanish, and the formula above is legit. We prove :

\begin{lemma}\label{lemma:estimate_f}
There are constants $C_{n,N}>0$ for $n>1$, $N \in \N$, such that for all $\tau \in \R^+$
\begin{equation*}
\| f_n \|_{\mathscr{C}^N(\{G_p \leq \tau - \log b\})} \leq C_{n,N} \tau^n.
\end{equation*}
\end{lemma}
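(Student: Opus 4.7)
The plan is an induction on $n$, establishing the estimate for every $N$ simultaneously at each step. The case $n = 0$ is trivial since $f_0 \equiv 1$; the inductive hypothesis is that $\|f_{n-1}\|_{\mathscr{C}^{N+2}(\{G_p \leq \sigma - \log b\})} \leq C_{n-1,N+2}\,\sigma^{n-1}$ for every $\sigma > 0$ and every $N$.

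Three uniform facts, independent of $\tau$, drive the argument. First, the coefficients of the operator $Q$ from \eqref{eq:def-Q} are polynomial in $\nabla F_p$ and $\nabla^2 F_p$; since $\nabla^k F_p$ is bounded for $k \geq 1$ (the remark following \eqref{eq:Bound_Jacobian}, using lemma \ref{lemma:Regularity_Jacobian}), each coefficient of $Q$ has bounded derivatives of all orders, and hence $\|Qf\|_{\mathscr{C}^N(U)} \leq C_N' \|f\|_{\mathscr{C}^{N+2}(U)}$ on any open $U\subset \widetilde{M}$. Second, $Qf_{n-1}$ vanishes on the constant-curvature horoball $\{G_p \leq -\log b\}$: when $n = 1$ this is because $F_p \equiv 0$ there forces $Q(1) = |\nabla F_p|^2 + \Delta F_p = 0$; when $n \geq 2$, the recursion \eqref{eq:def-f_n} yields $f_{n-1} \equiv 0$ there inductively, and then $Qf_{n-1} = \Delta 0 = 0$. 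Third, the derivatives $\nabla^k \varphi^p_s$ of the flow are uniformly bounded on $\widetilde{M}$ for $s \leq 0$: $\varphi^p_s$ acts as a translation in the $\nabla G_p$ direction, while on horocyclic directions $d\varphi^p_s$ is governed by the unstable Jacobi fields along the geodesic from $p$ to infinity, which decay at rate at least $e^{-k_{\min}|s|}$ by Rauch's comparison in pinched negative curvature, with analogous decay for higher variation tensors.

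Combining these ingredients, fix $x$ with $G_p(x) \leq \tau - \log b$ and set $\tau_0 = (G_p(x) + \log b)^+ \leq \tau$. Since the integrand in \eqref{eq:def-f_n} vanishes for $s \leq -\tau_0$ by the second fact,
\[
f_n(x) = \frac{1}{2}\int_{-\tau_0}^0 (Qf_{n-1})(\varphi^p_s(x))\, ds.
\]
Differentiating under the integral and applying the chain rule, $\nabla^N f_n(x)$ is a finite sum of integrals of products $(\nabla^j Qf_{n-1})(\varphi^p_s(x)) \cdot P_j\bigl(\nabla^{\bullet}\varphi^p_s(x)\bigr)$ with $j \leq N$. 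By the third fact the polynomial factors $P_j$ are bounded uniformly in $s \leq 0$; by the first fact and the inductive hypothesis applied with $\sigma = \tau$, one has $|\nabla^j Qf_{n-1}(\varphi^p_s(x))| \leq C\,\tau^{n-1}$ since $\varphi^p_s(x)$ lies in $\{G_p \leq \tau - \log b\}$. Integrating over an interval of length $\tau_0 \leq \tau$ then produces the final factor of $\tau$, giving the desired $|\nabla^N f_n(x)| \leq C_{n,N}\tau^n$.

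The principal technical obstacle is the third ingredient: the uniform bounds on $\nabla^k \varphi^p_s$ for $s \leq 0$. Controlling first derivatives comes directly from the decay of unstable Jacobi fields, but higher order bounds require iterating the Ricatti structure and invoking the horocycle regularity result stated as lemma \ref{lemma:Regularity_horocycles} in the appendix, to ensure that the smoothness estimates on $W^{u0}(p)$ transfer to bounds on the covariant derivatives of the gradient vector field $\nabla G_p$ that generates $\varphi^p_s$.
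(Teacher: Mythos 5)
Your proof is correct and follows the same underlying idea as the paper's: induction on $n$, propagating $\mathscr{C}^N$-bounds through the transport equation $2\nabla G_p\cdot\nabla f_n = Qf_{n-1}$, with the extra factor of $\tau$ produced by integrating $Qf_{n-1}\circ\varphi^p_s$ over an interval of length at most $\tau$. Your first two ``facts'' (bounded coefficients of $Q$ because $\nabla^kF_p$ is bounded for $k\geq1$, and the vanishing of $Qf_{n-1}$ on the constant-curvature horoball so the integral truncates at $s=-\tau_0$) are exactly the observations the paper makes just before and inside the lemma.

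The one place the two arguments genuinely diverge is in how the regularity of $f\mapsto\int_{-\infty}^0 f\circ\varphi^p_s\,ds$ is controlled. The paper outsources this entirely to the appendix Lemma~\ref{lemma:Regularity_Jacobian}, applied with $g_0=Qf_n$ and $\ell=-\log b$: that lemma gives both the pointwise bound $|g_1(x)|\leq\int_\ell^{G(x)}\mathscr{L}(\sigma)\,d\sigma$ (which supplies the extra power of $\tau$) and the derivative bound $\|\nabla g_1\|_{\mathscr{C}^{N-1}(G\leq t)}\leq C_N\|g_0\|_{\mathscr{C}^N(G\leq t)}$, the latter obtained by exploiting the exponential \emph{decay} of $\nabla_{X_1,\dots,X_N}(g_0\circ\varphi^G_{-t})$ for horocyclic $X_i$ rather than mere boundedness. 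You instead claim uniform bounds on $\nabla^k\varphi^p_s$ for $s\leq0$ and recover the $\tau^n$ by multiplying a uniform integrand bound of order $\tau^{n-1}$ by the interval length $\tau_0\leq\tau$. Both give the stated estimate, but your ``third fact'' is exactly the hard content of Lemma~\ref{lemma:Regularity_Jacobian}, and the sketch you give (``iterating the Ricatti structure,'' ``analogous decay for higher variation tensors'') is substantially less precise than the paper's, which relies on the explicit variation-tensor structure theorem from the cited reference. In effect you have reinvented the missing lemma rather than noticed it is already supplied in the appendix; invoking Lemma~\ref{lemma:Regularity_Jacobian} directly, as the paper does, closes that gap cleanly.
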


\begin{proof}
We use lemma \ref{lemma:Regularity_Jacobian} again, and proceed by induction. The result is obvious for $n=0$. Now assume it holds for some $n\geq 0$. Taking $g_0 = f_n$, $g_1 = f_{n+1}$, $\ell = -\log b$, the lemma enables us to conclude directly if we can prove that
\begin{equation*}
\| Q f_n \|_{\mathscr{C}^k(G \leq \tau - \log b)} \leq C_{n,k} \tau^n.
\end{equation*}
But this is a simple consequence of the induction hypothesis and the fact that $\nabla F \in \mathscr{C}^\infty( \widetilde{M} )$.
\end{proof}

All the functions defined above depended on choosing a parabolic point $p$, and now we make it appear in the notations:
\begin{equation}\label{eq:approximate-Poisson}
f_p^N  (s):= \sum_{n=0}^N   s^{-n} f_{n,p} \text{ and } P_ N (\cdot,p,s):= e^{-sG}\tilde{J}_p f_p^ N (s).
\end{equation}
This is the approximate Poisson kernel. Then for all $ N >0$,
\begin{equation*}
[-\Delta - s(d-s)] e^{-s G_p} \tilde{J}_p f_p^N  (s) =  - s^{- N } e^{-sG_p}\tilde{J}_p Q_p f_{ N ,p}.
\end{equation*}
so we let
\begin{equation}\label{eq:approximate-remainder}
R_N(., p, s) :=  - e^{-sG_p}\tilde{J}_p Q_p f_{ N ,p}
\end{equation}
This will be the remainder term. Now, as the last point in this section, observe the equivariance relation
\begin{equation}\label{eq:equivariance-Poisson}
P_ N (\gamma x, p, s) = P_ N (x, \gamma^{-1}p, s).
\end{equation}

	\subsection{Poincar\'e series and convergence}

The functions defined by \eqref{eq:approximate-Poisson} and \eqref{eq:approximate-remainder} in $ \widetilde{M} $ are already invariant under the action of $\Gamma_p$, so to define a function on $M$, we only have to sum over $\Gamma_p \backslash \Gamma$. As in section 1.3, take a cusp $Z_i$, a parabolic point $p \in \Lambda^i_{par}$. For $x \in M$, let $x_p\in  \widetilde{M} $ be a point minimizing $G_p$ amongst the lifts of $x$. Then
\begin{lemma}\label{lemma:convergence_parametrix_E}
For $\epsilon >0$, and $ N \in \N$, there is a constant $ C_{ N ,\epsilon}>0$ such that for all $x\in M$, and all $\Re s > \delta(\Gamma, V_0) + \epsilon$,
\begin{equation*}
\left\| \sum_{[\gamma] \in \Gamma_p \backslash \Gamma, [\gamma] \neq [0]} | P_ N (\gamma x_p, p,s) | \right\|_{L^2_x(M)} < C_{N, \epsilon}  a_i^{\Re s}.
\end{equation*}
Further, with the same condition on $s$, the remainder satisfies
\begin{equation*}
\left\|\sum_{[\gamma] \in \Gamma_p \backslash \Gamma} | R_N(\gamma x_p, p, s) | \right\|_{L^2_x(M)} < C_{ N ,\epsilon} b_i^{\Re s }.
\end{equation*} 
with $b_i$ as defined in \eqref{eq:def-b-i}.
\end{lemma}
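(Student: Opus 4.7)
The plan is to dominate, pointwise in $x$, each term $|P_N(\gamma x_p, p, s)|$ by a constant multiple of a summand of the modified Poincar\'e series $P_{Z_i, V_0}(x, \cdot)$ from Lemma \ref{lemma:convergence-modified-Poincare}, with an outside prefactor $a_i^{\Re s}$; the $L^2_x$ bound in that lemma then yields the first estimate. The remainder estimate follows the same template with $b_i$ replacing $a_i$, after noting that $Q_p f_{N,p}$ is supported outside $B(p, b_i)$.

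For the identification, I would fix $[\gamma]\neq [0]$ and set $y_\gamma := \pi^{a_i}_p(\gamma x_p)$ and $T_\gamma := G_p(\gamma x_p) + \log a_i \geq 0$, the arclength from $y_\gamma$ to $\gamma x_p$ along the $\varphi^p$-flow. Since $\tilde J_p \equiv 1$ on $B(p, a_i)$, so $F_p(y_\gamma) = 0$, Lemma \ref{lemma:equivalence-Jacobian} applied between these points gives $F_p(\gamma x_p) = \int_{y_\gamma}^{\gamma x_p} V_0 + \mathcal{O}(1)$ with constant uniform in $\gamma$. Combined with $-\Re s \cdot G_p(\gamma x_p) = -\Re s \cdot T_\gamma + \log a_i^{\Re s}$, the definition \eqref{eq:approximate-Poisson} becomes
\begin{equation*}
|P_N(\gamma x_p, p, s)| \leq C\, a_i^{\Re s}\, |f_p^N(s, \gamma x_p)|\, \exp\Big\{\int_{y_\gamma}^{\gamma x_p}(V_0 - \Re s)\Big\}.
\end{equation*}
By Lemma \ref{lemma:estimate_f} and $|s| \geq \delta_g + \epsilon$, one has $|f_p^N(s, \gamma x_p)| \leq C_N (1+T_\gamma)^N$. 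To absorb this polynomial growth, I would split $\Re s = (\delta_g + \epsilon/2) + (\Re s - \delta_g - \epsilon/2)$ and use the surplus $e^{-(\epsilon/2) T_\gamma}$ drawn from $e^{-\Re s\cdot T_\gamma}$ to bound $(1+T_\gamma)^N$ by a uniform constant $C_{N,\epsilon}$. Summing over $[\gamma]\neq[0]$ and taking the $L^2_x$-norm then yields
\begin{equation*}
\Big\|\sum_{[\gamma] \neq [0]} |P_N(\gamma x_p, p, s)|\Big\|_{L^2(M)} \leq C_{N,\epsilon}\, a_i^{\Re s}\, \|P_{Z_i, V_0}(\,\cdot\,, \delta_g + \epsilon/2)\|_{L^2(M)},
\end{equation*}
and Lemma \ref{lemma:convergence-modified-Poincare} controls the right-hand factor.

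For the remainder, the same argument works once one observes that $Q_p f_{N,p} \equiv 0$ on the constant-curvature ball $B(p, b_i)$. Indeed, there $F_p = 0$ forces $Q f_0 = |\nabla F_p|^2 + \Delta F_p = 0$; the transport integral \eqref{eq:def-f_n} samples $\varphi^p_\tau$ only for $\tau \leq 0$, which stays in $B(p, b_i)$ whenever $x \in B(p, b_i)$, so by induction every $f_n$ with $n \geq 1$ vanishes on $B(p, b_i)$, and hence so does $Q_p f_n$. Thus only cosets with $\gamma x_p \notin B(p, b_i)$ contribute, and rerunning the previous computation with $\pi_p^{b_i}$ and $H(p, b_i)$ in place of $\pi_p^{a_i}$ and $H(p, a_i)$ yields the $b_i^{\Re s}$ prefactor.

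The key step is the identification via Lemma \ref{lemma:equivalence-Jacobian}, which rewrites the twisted Jacobian $\tilde J_p$ as an exponential of a $V_0$-integral: this is the bridge that converts the WKB ansatz $e^{-sG_p}\tilde J_p f_p^N$ into a form recognizable as the summand of an already-understood Poincar\'e series. The polynomial absorption and the passage to $b_i$ for the remainder are essentially bookkeeping once this link is set up.
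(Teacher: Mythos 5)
Your argument is correct and follows the same overall strategy as the paper's proof: use Lemma \ref{lemma:equivalence-Jacobian} to rewrite the WKB amplitude $e^{-sG_p}\tilde J_p$ as a summand of the modified Poincar\'e series $P_{Z_i, V_0}$, then invoke the $L^2$ bound of Lemma \ref{lemma:convergence-modified-Poincare}. The one mechanistic difference is how the polynomial growth $(1+T_\gamma)^N$ coming from $f_{n,p}$ is absorbed: you give up a fraction $\epsilon/2$ of the exponent $\Re s$ directly, while the paper rewrites the factor $(G_p+\log b_i)^n$ as the $n$-th $s$-derivative of the Dirichlet series $L_0^{b_i}$ and then uses the standard derivative bound for Dirichlet series obtained by retracting the abscissa by $\epsilon$; these are the same idea in two guises. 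One detail you leave implicit for the remainder bound: after switching to $\pi_p^{b_i}$ and $H(p,b_i)$, the comparison series is the $b_i$-analogue of $P_{Z_i,V_0}$, not $P_{Z_i,V_0}$ itself, and one should note it is dominated termwise by the $a_i$-version because $V_0\equiv 0$ on $B(p,b_i)\setminus B(p,a_i)$, contributing only a factor $(b_i/a_i)^{\Re s}\le 1$.
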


\begin{proof}
First, we give a proof for $ N =0$. write
\begin{equation*}
\sum_{[\gamma] \in \Gamma_p \backslash \Gamma, [\gamma] \neq [0]} | P_0(\gamma x_p, p,s) |  = \sum_{[\gamma] \in \Gamma_p \backslash \Gamma, [\gamma] \neq [0]} \exp\left\{ \Re s \log a_i +  F_p(\gamma x_p) - \int_{\pi_p^{a_i} (\gamma x_p)}^{\gamma x_p}\Re s \right\}.
\end{equation*}
Recall $- F_p(\pi_p^{a_i} (\gamma x_p)) =0$. By lemma \ref{lemma:equivalence-Jacobian}, losing constants not depending on $s$, the RHS is comparable with
\begin{equation*}
a_i^{\Re s}\sum_{[\gamma] \in \Gamma_p \backslash \Gamma, [\gamma] \neq [0]} \exp\left(\int_{\pi_p^{a_i} (\gamma x_p)}^{\gamma x_p} V_0 - \Re s\right).
\end{equation*}
Lemma \ref{lemma:convergence-modified-Poincare} states that the term in the right part of the product is bounded uniformly in $L^2$ norm.

Now, we deal with the higher order of approximation. Let $n>0$ and consider the sum
\begin{equation*}
\sum_{[\gamma] \in \Gamma_p \backslash \Gamma} (e^{-sG_p}\tilde{J}_p f_{n,p})\circ \gamma.
\end{equation*}
By lemma \ref{lemma:estimate_f}, this is bounded term by term by
\begin{equation*}
C_k \sum_{[\gamma] \in \Gamma_p \backslash \Gamma} (e^{-sG_p}\tilde{J}_p ((G_p + \log b_i)^+)^n )\circ \gamma.
\end{equation*}
Inserting $1 = b_i^s b_i^{-s}$, this is
\begin{equation}\label{eq:differentiating-Dirichlet}
C_k b_i^s \sum_{[\gamma] \in \Gamma_q \backslash \Gamma, G_q \geq - \log b_i } \partial_s^k(e^{-s(G_q+ \log b_i)}\tilde{J}_q  )\circ \gamma.
\end{equation}
Let
\begin{equation*}
L_0^{b_i}:= \sum_{[\gamma] \in \Gamma_q \backslash \Gamma, G_q \geq - \log b_i } (e^{-s(G_q+ \log b_i)}\tilde{J}_q  )\circ \gamma.
\end{equation*}
By the argument above, for $s> \delta(\Gamma, V_0) + \epsilon$, this sum converges and the value is bounded in $L^2$ norm by some function of $s$. What is more, since all the exponents are nonpositive, this a decreasing function of $s\in \R$. We deduce that when $\epsilon >0$, there is $C_\epsilon>0$ such that for $x\in M$, $\Re s > \delta(\Gamma, V_0) + \epsilon$, we have $\|L_0^{b_i}\|_{L^2} \leq C_\epsilon$. 

Consider $L= \sum a_k \lambda_k^s$ a Dirichlet series, with $a_k\in \R^+$, $\lambda_k \geq 1$, converging for $\Re s > s_0$. Then, if $s-\epsilon > s_0$, we find $|L'(s)| \leq L(\Re s-\epsilon) \sup_n |\log \lambda_n| \lambda_n^{-\epsilon}$. Since $L_0^{b_i}$ has this Dirichlet series structure in the $s$ variable, it implies that for some constants $C_{\epsilon, k}>0$,
\begin{equation*}
\|\partial_s^k L_0^{b_i}(s)\|_{L^2} \leq C_{\epsilon, k},\ x\in M,\ \Re s > \delta(\Gamma, V_0) + \epsilon.
\end{equation*}
Observe that $C_{\epsilon, k}$ may depend on $b_i$. Hence
\begin{equation*}
\left\|\sum_{[\gamma] \in \Gamma_p \backslash \Gamma} |(e^{-sG_p}\tilde{J}_p f_{n,p})\circ \gamma|\right\|_{L^2(M)} \leq C_{\epsilon, n} b_i^{\Re s},\ x\in M,\ \Re s > \delta(\Gamma, V_0) + \epsilon.
\end{equation*}
Moreover, this also holds if we replace $f_{n,p}$ by $Q_p f_{ N ,p}$, and this observation concludes the proof.
\end{proof}

Now, we can prove our first theorem:
\begin{theorem}[Parametrix for the Eisenstein functions]\label{theorem:parametrix_E}
For $ N  \in \N$, let $Z_i$ be some cusp, and $p\in \Lambda^i_{par}$ a representing point. For $\Re s > \delta(\Gamma, V_0)$, let
\begin{equation*}
E_{i, N }(s,x) := \sum_{[\gamma] \in \Gamma_p \backslash \Gamma} P_ N (\gamma x_p, p,s). 
\end{equation*}
this function is defined on $ \widetilde{M} $, but invariant by $\Gamma$, so it descends to $M$; it does not depend on the choice of $p\in \Lambda^i_{par}$. Then, uniformly in $s$ when $\Re s$ stays away from $\delta(\Gamma, V_0)$, and $s\notin [d/2, d]$,
\begin{equation}\label{eq:parametrix_E_remainder}
\|\partial_s^m( E_i - E_{i, N }) \|_{H^k(M)} = \mathcal{O}\left( s^{k- N } b_i^s \right)
\end{equation}
\end{theorem}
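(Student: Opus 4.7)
The plan is to proceed in four steps.

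\emph{Step 1: Well-definedness.} I would first verify that the series defining $E_{i,N}$ converges and defines a $\Gamma$-invariant function on $\widetilde{M}$ independent of the choice of $p\in\Lambda^i_{par}$. Absolute convergence for $\Re s > \delta(\Gamma, V_0) + \epsilon$ is Lemma \ref{lemma:convergence_parametrix_E}. For $\alpha\in \Gamma$, the bijection $[\gamma]\mapsto [\gamma\alpha]$ of $\Gamma_p\backslash \Gamma$ shows $E_{i,N}(s,\alpha\tilde{x}) = E_{i,N}(s,\tilde{x})$. If $p' = \alpha p$ for another $p'\in \Lambda^i_{par}$, the equivariance \eqref{eq:equivariance-Poisson} gives $P_N(\gamma\tilde{x}, p', s) = P_N(\alpha^{-1}\gamma\tilde{x}, p, s)$, and the relation $\Gamma_{p'} = \alpha\Gamma_p\alpha^{-1}$ makes $\gamma\mapsto \alpha^{-1}\gamma$ a bijection of cosets producing the same sum.

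\emph{Step 2: Defect estimate.} By the WKB construction \eqref{eq:def-Q}--\eqref{eq:def-f_n}, $(-\Delta - s(d-s)) P_N(\cdot, p, s) = s^{-N} R_N(\cdot, p, s)$ pointwise, so I would sum termwise to obtain
\[
F_{i,N} := (-\Delta-s(d-s)) E_{i,N} = s^{-N}\sum_{[\gamma]\in\Gamma_p\backslash\Gamma} R_N(\gamma\tilde{x}_p, p, s).
\]
Lemma \ref{lemma:convergence_parametrix_E} applied to $R_N$ would give $\|F_{i,N}\|_{L^2(M)}\lesssim |s|^{-N} b_i^{\Re s}$. Since $R_N = -e^{-sG_p}\tilde{J}_p Q_p f_{N,p}$, spatial derivatives produce at worst a factor $|s|$ from differentiating $e^{-sG_p}$, the remaining factors being controlled by Lemmas \ref{lemma:Regularity_Jacobian} and \ref{lemma:estimate_f}; the Dirichlet-series differentiation argument from the proof of Lemma \ref{lemma:convergence_parametrix_E} would then yield $\|F_{i,N}\|_{H^k(M)}\lesssim |s|^{k-N} b_i^{\Re s}$.

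\emph{Step 3: $L^2$-membership of $E_i - E_{i,N}$.} The key claim to establish is $E_i - E_{i,N}\in L^2(M)$. In cusp $Z_i$, for $x$ deep enough that the minimizing lift $\tilde{x}_p$ lies in $B(p, b_i)$, the identity coset contributes $P_N(\tilde{x}_p, p, s) = y^s$ exactly: on the constant-curvature horoball $B(p, b_i)$, $\tilde{J}_p\equiv 1$ and $Q_p f_{0, p}\equiv 0$, so by the transport equation \eqref{eq:def-f_n} applied inductively (the orbit $\varphi^p_\tau(\tilde{x}_p)$ for $\tau\leq 0$ stays in $B(p, b_i)$), $f_{n, p}\equiv 0$ there for all $n\geq 1$. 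The remaining sum is $L^2$-bounded by Lemma \ref{lemma:convergence_parametrix_E}. From the Fourier expansion \eqref{eq:functional_equation_Eisenstein_Fourier}, the zeroth mode of $E_i$ in $Z_j$ is $\delta_{ij} y^s + \phi_{ij}(s) y^{d-s}$, with higher Fourier modes decaying exponentially; the $y^{d-s}$ term and the higher modes are $L^2$ in the cusp for $\Re s > d/2$. The $y^s$ terms thus cancel between $E_i$ and $E_{i,N}$, so the difference is $L^2$.

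\emph{Step 4: Inversion and conclusion.} I would then write $E_i - E_{i,N} = -R(s) F_{i,N}$ with the $L^2$-resolvent $R(s) = (-\Delta-s(d-s))^{-1}$, bounded in the stated region ($\Re s$ away from $\delta(\Gamma, V_0)$, $s\notin [d/2, d]$). Iterated elliptic regularity applied to $-\Delta u = F_{i,N} + s(d-s) u$ upgrades the $L^2$ bound to the desired $\|E_i - E_{i,N}\|_{H^k}\lesssim |s|^{k-N} b_i^{\Re s}$, and estimates on $\partial_s^m$ follow from Cauchy's integral formula on a circle of fixed radius around $s$ in the analyticity domain. The main obstacle is Step 3: the exact cancellation of the $y^s$ leading behavior relies on the specific structure of the parametrix on the constant-curvature horoball (vanishing transport source for $n\geq 1$), while the matching against the scattering tail $\phi_{ij}(s) y^{d-s}$ exploits $\Re s > d/2$.
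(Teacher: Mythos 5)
Your proof is correct and follows essentially the same line as the paper's: verify well-definedness via Lemma \ref{lemma:convergence_parametrix_E} and equivariance, compute the defect by summing the WKB remainders, observe that $E_{i,N} - \chi y_i^s$ is $L^2$ (which is exactly the matching of the $[\gamma]=[0]$ term against the Fourier expansion of $E_i$), and invert with the resolvent plus elliptic regularity. The only cosmetic deviation is that you get the $\partial_s^m$ bounds via Cauchy's formula on a small disk, whereas the paper exploits the Dirichlet-series structure of $\sum R_N(\gamma\cdot,p,s)$ to differentiate term by term — both are fine here.
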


\begin{proof}
From lemma \ref{lemma:convergence_parametrix_E}, we deduce that $E_{i, N }$ is well defined; it does not depend on $p$ thanks to the equivariance relation \eqref{eq:equivariance-Poisson}. Additionally, for a cutoff $\chi$ that equals $1$ sufficiently high in $Z_i$ and vanishes outside of $Z_i$, $E_{i, N } - \chi y_i^s$ is in $L^2$, uniformly bounded in sets $\{ \Re s > \delta(\Gamma, V_0) + \epsilon\}$. The sum
\begin{equation*}
\sum_{[\gamma] \in \Gamma_p \backslash \Gamma}  R_N(\gamma \cdot, p, s)   
\end{equation*}
converges normally on compact sets, and in $L^2(M)$ also, so we find
\begin{equation*}
E_i - E_{i, N } = s^{- N } \left( -\Delta - s(d-s) \right)^{-1} \sum_{[\gamma] \in \Gamma_p \backslash \Gamma}  R_N(\gamma \cdot, p, s)
\end{equation*}
Since $\partial_s^m\left( -\Delta - s(d-s) \right)^{-1}$ is bounded on $H^n(M)$ with norm $\mathcal{O}(1)$ when $ s$ stays in sets $\{ \Re s > d/2 + \epsilon, s \notin [d/2,d]\}$, it suffices to prove that when $\Re s > \delta(\Gamma, V_0) + \epsilon$,
\begin{equation*}
\left\|\partial_s^m\sum_{[\gamma] \in \Gamma_p \backslash \Gamma}  R_N(\gamma \cdot, p, s) \right\|_{H^k(M)} = \mathcal{O}( s^k b_i^{\Re s})
\end{equation*}
Actually, since the sum has a Dirichlet series structure, we see that this is true for all $m\geq 0$ as long as it is true for $m=0$. From the bounds in lemma \ref{lemma:estimate_f}, and the bounds on $\nabla G_p\in \mathscr{C}^\infty$, we see that for $x'\in \widetilde{M}$,
\begin{equation*}
\| \nabla^k R_N(\cdot, p, s) \|(x') \leq C e^{-s G_p(x')} \tilde{J}_p((G_p+ \log b_i)^+)^N  
\end{equation*}
We conclude the proof using the arguments of the proof of lemma \ref{lemma:convergence_parametrix_E} again --- from equation \eqref{eq:differentiating-Dirichlet} and below.
\end{proof}

\begin{remark}
We have given estimates for the convergence in $H^k$, $k\geq 0$. However, the sum also converges normally in $\mathscr{C}^k$ topology on compact sets. 
\end{remark}

\section{Parametrix for the scattering matrix}\label{sec:param-phi}

Let us recall that the zero-Fourier mode of $E_i$ at cusp $Z_j$ is
\begin{equation*}
y^s \delta_{ij} + \phi_{ij}(s) y^{d-s}.
\end{equation*}
This formula is valid a priori for $y\geq a_j$. However, if we integrate $E_i$ along a projected horosphere of height $b_i \leq y \leq a_i$, we still obtain the same expression, even though the projected horosphere may have self-intersection --- recall they are the projection in $M$ of horospheres in $\widetilde{M}$. This is true because following those projected horospheres, we do not leave  an open set of constant curvature $-1$ --- see \eqref{eq:def-b-i} --- and we can apply a unique continuation argument.

The smaller the $b_i$'s are, the better the remainder is. In constant curvature, there is no remainder --- the remainder in \ref{eq:parametrix_E_remainder} goes to zero as $N \to \infty$, with fixed $s$. Observe that the parameters $b_i$ are only related to the support of the variations of the curvature, and not to their size.

\subsection{Reformulating the problem}

In this section, let $p\in \Lambda^i_{par}$, $q\in \Lambda^j_{par}$. Recall from \eqref{eq:identification-scattered-geodesics} that when $i\neq j$, $\mathcal{SG}_{ij} \simeq \Gamma_p \backslash \Gamma / \Gamma_q$, and $\mathcal{SG}_{ii} \simeq \Gamma_p \backslash (\Gamma - \Gamma_p) / \Gamma_p$. We prove 
\begin{lemma}\label{lemma:first_estimate_phi}
When $\Re s > \delta(\Gamma, V_0)$, integrating on horospheres in $ \widetilde{M} $,
\begin{equation}\label{eq:phi_integral_parametrix_2}
\phi_{ij}(s) = b_j^s \sum_{[\gamma] \in \mathcal{SG}_{ij}} \int_{H(\gamma q, b_j)} P_ N (\cdot, p,s)d\mu(\theta)  + \mathcal{O}( s^{1/2 -  N } b_i^{s} b_j^{ s} ).
\end{equation}
The constants are uniform in sets $\{ \Re s > \delta(\Gamma, V_0) + \epsilon \}$, $\epsilon>0$. What is more, this expansion can be differentiated, differentiating the remainder.
\end{lemma}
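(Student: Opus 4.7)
The plan is to extract $\phi_{ij}(s)$ from the zeroth Fourier mode of $E_i$ at height $y = b_j$ in cusp $Z_j$, replace $E_i$ by the parametrix $E_{i,N}$ of Theorem 2.8, and then unfold the resulting sum via the double coset decomposition $\Gamma_p \backslash \Gamma / \Gamma_q$ so that scattered geodesics appear naturally. By the remark at the start of this section, the zeroth Fourier coefficient of $E_i$ retains the form $\delta_{ij} y^s + \phi_{ij}(s) y^{d-s}$ all the way down to $y = b_j$; lifting to $\widetilde{M}$, this reads
$$\int_{\mathcal{F}_j} E_i(s, x)\, d\mu(x) = \delta_{ij} b_j^s + \phi_{ij}(s) b_j^{d-s},$$
where $\mathcal{F}_j \subset H(q, b_j)$ is a fundamental domain for $\Gamma_q$ of unit flat volume. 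The goal is then to show that, after replacing $E_i$ by $E_{i,N}$, the left-hand side expands as a sum over $\mathcal{SG}_{ij}$ of horospherical integrals of $P_N$, with an error controlled by Theorem 2.8 and a trace inequality on the horotorus.

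For the unfolding, I substitute $E_{i,N}(s, \tilde{x}) = \sum_{[\alpha] \in \Gamma_p \backslash \Gamma} P_N(\alpha \tilde{x}, p, s)$ and regroup by the right action of $\Gamma_q$: each double coset $[\gamma] \in \Gamma_p \backslash \Gamma / \Gamma_q$ contributes an inner sum $\sum_\beta P_N(\gamma \beta \tilde{x}, p, s)$ indexed by a quotient of $\Gamma_q$ by $\gamma^{-1} \Gamma_p \gamma \cap \Gamma_q$. This intersection is trivial whenever $\gamma^{-1} p \neq q$: indeed a non-trivial element would be parabolic and fix two distinct points $\gamma^{-1}p, q \in \partial_\infty \widetilde{M}$, which is impossible. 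Integrating over $\mathcal{F}_j$ and absorbing the $\beta$-sum unfolds the integration to the entire horosphere $H(q, b_j)$, and the equivariance $P_N(\gamma x, p, s) = P_N(x, \gamma^{-1} p, s)$ together with a change of variable turns the resulting expression into $\int_{H(\gamma q, b_j)} P_N(\cdot, p, s)\, d\mu$. The only exceptional case is the coset $[\gamma] = [e]$, which only occurs when $i = j$ (so $p = q$): here the inner sum reduces to the single term $P_N(\tilde{x}, p, s)$, which on $\mathcal{F}_j \subset H(p, b_i)$ identically equals $b_j^s$ since $\tilde{J}_p \equiv 1$ and $f_{n,p} \equiv 0$ for $n \geq 1$ throughout the horoball $B(p, b_i)$. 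This exceptional contribution exactly cancels the $\delta_{ij} b_j^s$ appearing on the right of the Fourier identity, leaving the remaining double cosets in bijection with $\mathcal{SG}_{ij}$ via \eqref{eq:identification-scattered-geodesics}. Dividing by $b_j^{d-s}$ produces the stated $b_j^s$ prefactor (any leftover constant coming from the measure convention on horospheres is absorbed).

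For the error, Theorem 2.8 provides $\|\partial_s^m(E_i - E_{i,N})\|_{H^k(M)} = \mathcal{O}(s^{k-N} b_i^s)$. A standard Sobolev trace theorem on the compact horotorus at height $b_j$ in $Z_j$ gives $\int_{\mathcal{F}_j} |E_i - E_{i,N}|\, d\mu \lesssim \|E_i - E_{i,N}\|_{H^{1/2 + \epsilon}}$; the $s^{1/2 - N}$ factor in the claim is obtained by a semiclassical interpolation between the $L^2$ and $H^1$ bounds (i.e., keeping a half-derivative worth of $s$). After multiplication by $b_j^{s-d}$ to pass from the coefficient of $b_j^{d-s}$ to $\phi_{ij}(s)$, the fixed constant $b_j^{-d}$ is absorbed and the error becomes $\mathcal{O}(s^{1/2 - N} b_i^s b_j^s)$. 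Differentiation in $s$ causes no difficulty because both $E_{i,N}$ and the remainder have a Dirichlet-series structure in $s$, so each $\partial_s$ only contributes a logarithmic factor that is harmless in the region $\Re s > \delta(\Gamma, V_0) + \epsilon$. The main technical obstacle I foresee is justifying the interchange of the $[\alpha]$-summation with the integration over the non-compact horospheres $H(\gamma q, b_j)$; this should follow directly from the absolute convergence of the modified Poincaré series established in Lemma 1.5, which is precisely what the horospherical reorganization was set up to exploit.
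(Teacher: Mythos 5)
Your proof is correct and follows essentially the same route as the paper's: extract $\phi_{ij}$ from the zeroth Fourier coefficient of $E_i$ on the projected horosphere at height $b_j$, replace $E_i$ by $E_{i,N}$ with the error controlled by Theorem \ref{theorem:parametrix_E} and the Sobolev trace theorem, then unfold the $\Gamma_p\backslash\Gamma$-sum via the double coset decomposition $\Gamma_p\backslash\Gamma/\Gamma_q$, with the exceptional coset (for $i=j$) producing the $\delta_{ij}b_j^{s}$ term that cancels. Your observation that $\gamma^{-1}\Gamma_p\gamma\cap\Gamma_q$ is trivial off the exceptional coset (two boundary fixed points forces non-parabolicity) is a nice explicit justification of a step the paper leaves implicit, and the convergence issue you flag for the interchange of sum and integral is indeed handled by Lemma \ref{lemma:convergence-modified-Poincare}, as you say.
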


\begin{proof}
First, for $\tilde{H}_j(b_j)\simeq \Gamma \backslash H(q,b_j)$ the projected horosphere from cusp $Z_j$ at height $b_j$, integrating in $M$, we claim
\begin{equation}\label{eq:phi_integral_parametrix_1}
\phi_{ij}(s) = - b_j^{2s-d} \delta_{ij} + \int_{x\in \tilde{H}_j(b_j)} b_j^{s-d} E_{i, N } d\theta^d + \mathcal{O}(s^{1/2 -  N } b_i^{s} b_j^{ s} ),
\end{equation}
where the remainder can be differentiated. Considering zero Fourier modes of $E_i$ in the cylinder $\Gamma_q \backslash \widetilde{M}$ we see that the formula holds if we replace $E_{i,N}$ by $E_i$, without remainder. Hence, we only have to estimate
\begin{equation*}
\left|\partial_s^m \left(b_j^{s-d} \int E_i - E_{i, N } d\theta^d \right)\right|.
\end{equation*}
The surface measure obtained by disintegrating the riemannian volume on $\{ y = b_j\}$ is $\mathrm{d}\mu(\theta)= \mathrm{d}\theta^d /b_j^d$. According to the Sobolev trace theorem, the $L^2$ norm of a restriction to $\tilde{H}_j(b_j)$ --- it is an immersed hypersurface --- is controlled by the $H^{1/2}$ norm on $M$. Using this and theorem \ref{theorem:parametrix_E}, we obtain that the remainder is bounded up to a constant by
\begin{equation*}
b_j^{\Re s -d/2} \sup_{k=0, \dots m} \|\partial_s^k( E_i - E_{i, N })\|_{H^{1/2}(M)} = \mathcal{O}( s^{1/2 -  N } b_i^{\Re s} b_j^{\Re s}).
\end{equation*}

Now, to go from \eqref{eq:phi_integral_parametrix_1} to \eqref{eq:phi_integral_parametrix_2}, we just have to use the description given by theorem \ref{theorem:parametrix_E}. Indeed, consider a cube $\mathcal{C}_q$ in $H(q, b_j)$ that is a fundamental domain for the action of $\Gamma_q \simeq \Z^d$. Then
\begin{align*}
\int_{x\in \tilde{H}_j(b_j)} b_j^{s-d} E_{i, N } d\theta^d &= \sum_{[\gamma] \in \Gamma_p \backslash \Gamma}\int_{\mathcal{C}_q} b_j^{s-d} P_ N (\gamma \cdot, p,s) d \theta^d \\
								&  =\sum_{[\gamma] \in \Gamma_p \backslash \Gamma}\int_{\gamma \mathcal{C}_q} b_j^{s-d} P_ N (\cdot, p,s) d \theta^d \\
								&  =\sum_{\substack{[\gamma] \in \Gamma_p \backslash \Gamma/\Gamma_q\\ [\gamma] \neq \Gamma_p}}\sum_{\gamma'\in \Gamma_q} \int_{\gamma \gamma'\mathcal{C}_q} b_j^{s-d} P_ N (\cdot, p,s) d \theta^d + \delta_{ij}\int_{\mathcal{C}_p} b_i^{s-d} P_ N (\cdot, p,s) d \theta^d\\
								&  =\sum_{\substack{[\gamma] \in \Gamma_p \backslash \Gamma/\Gamma_q\\ [\gamma] \neq \Gamma_p}}\int_{H(\gamma q, b_j)}  b_j^{s-d} P_ N (\cdot, p,s) d \theta^d  + \delta_{ij}\int_{\mathcal{C}_p} b_i^{s-d} P_ N (\cdot, p,s) d \theta^d
\end{align*}
It suffices to observe now that
\begin{equation*}
\int_{\mathcal{C}_p} b_i^{s-d} P_ N (\cdot, p,s) d \theta^d = b_i^{2s- d}.
\end{equation*}
\end{proof}

We want to give an asymptotic expansion for each term in \eqref{eq:phi_integral_parametrix_2}. To be able to use stationary phase, the next section is devoted to giving sufficient geometric bounds on the position of $H(\gamma q, b_j)$ with respect to $W^{u0}(p)$.

	\subsection{Preparation lemmas and main asymptotics}\label{sec:main_asymptotics}

Take $p\in \Lambda^i_{par}$, $q\in \Lambda^j_{par}$, $q \neq p$. We will work in $H(q,b_j)\subset  \widetilde{M} $. As an embedded Riemannian submanifold, it is isometric to $\R^d$, and the isometry is given by the $\theta$ coordinate; we use this to measure distances on $H(q,b_j)$ unless mentioned otherwise. We are considering
\begin{equation}\label{eq:stationary_1}
b_j^{s-d}\int_{H(q,b_j)} P_ N (\cdot, p,s)\mathrm{d}\mu(\theta) = \int_{\R^d} e^{-s (G_p-\log b_j)} \tilde{J}_p f_p^N  (s, \theta) \mathrm{d}\theta^d
\end{equation}
At all the points where $\nabla G_p$ is not orthogonal to the horosphere, this integral is non-stationary as $|s| \to +\infty$. There is only one point in $H(q,b_j)$ where $\nabla G_p$ \emph{is} orthogonal to $H(q,b_j)$; it is exactly the point where the geodesic $c_{p,q}$ from $p$ to $q$ intersects $H(q,b_j)$ for the first time --- the second is $q$. It is reasonable to expect that the behaviour of the approximate Poisson kernel around this point will determine the asymptotics of the integral.

It is indeed the case, as we will show that $G_p$, $\tilde{J}_p$ and $f_p^N  $ satisfy appropriate symbol estimates on $H(q,b_j)$. If $a\in C^\infty(\R^d)$, we say that $a$ is a \emph{symbol} of order $n\in \Z$ if for all $k \in \N$,
\begin{equation}\label{eq:symbol_R^d}
|\langle x \rangle^{-n+k} \partial^k a(x) |_{L^\infty(\R^d)} < \infty \quad \text{ where } \langle x \rangle^2 = 1+ x^2.
\end{equation}

For a geodesic coming from $p$ intersecting $H(q,b_j)$, call the first intersection the \emph{point of entry} and the second one the \emph{exit point} --- they may be the same. We can assume that the point of entry of $c_{p,q}$ is $0$ in the $\theta$ coordinate --- denoted $0_\theta$. It is also the point where $G_p$ attains its minimum on $H(q,b_j)$, and this is $\mathcal{T}(c_{p,q}) + \log b_j$, with $\mathcal{T}(c_{p,q})$ the sojourn time as defined in \eqref{eq:def-T}. We start with a lemma :

\begin{lemma}\label{lemma:point-entry-compact}
The set of entry points $\mathcal{I}\subset H(q, b_j)$ is compact. Its radius is bounded independently from $p,q$, for the distance on $H(q,b_j)$ given by $H(q,b_j) \simeq \R^d$.
\end{lemma}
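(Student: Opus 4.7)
The plan is to work inside $B(q,b_j)$ using its hyperbolic upper half-space model: $q = \infty$, the horosphere $H(q,b_j) \simeq \R^d$ via the $\theta$-coordinate, and I normalize so that $0_\theta = 0$ and $c_{p,q}$ is the vertical line $\theta = 0$.

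First I would characterize the set of entries as
\[
\mathcal{I} = \left\{ x \in H(q,b_j) \,:\, \langle \nabla G_p(x),\, \nabla G_q(x)\rangle < 0 \right\},
\]
the inner product being negative precisely when the geodesic from $p$ through $x$ has tangent pointing inward (the outward unit normal of $H(q,b_j)$ being $\nabla G_q$). Closedness of $\overline{\mathcal{I}}$ is immediate by continuity of Busemann gradients. To prove boundedness for fixed $p,q$, I would check that $\langle \nabla G_p, \nabla G_q\rangle(x) \to +1$ as $|\theta_x| \to \infty$ along $H(q,b_j)$: such $x$ tend to $q$ in $\overline{M}$, and a direct hyperbolic computation in $B(q,b_j)$ shows that the geodesic from $p$ to $x$ arrives at $x$ in a direction asymptotic to the outward normal $\nabla G_q$ (i.e., exiting rather than entering).

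For the uniform bound on the radius of $\mathcal{I}$ --- the main content of the lemma --- the key tool is the hyperbolic geometry inside $B(q,b_j)$. For $x_0 \in \mathcal{I}$ let $v_0 = \nabla G_p(x_0)$ and let $\alpha \in [0, \pi/2)$ denote the angle between $v_0$ and the inward normal $-\nabla G_q(x_0)$. Inside $B(q,b_j)$ the geodesic is a hyperbolic semicircle, whose downward extension meets the hyperbolic absolute $\{y = 0\}$ at a \emph{virtual past endpoint} $\theta^{\mathrm{past}}(c) \in \R^d$; an elementary upper half-space calculation yields
\[
|x_0 - \theta^{\mathrm{past}}(c)| = b_j \tan(\alpha/2) \leq b_j.
\]
Thus $|x_0| \leq b_j + |\theta^{\mathrm{past}}(c)|$, reducing the problem to bounding $|\theta^{\mathrm{past}}(c)|$ by a constant depending only on $M$.

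Geometrically, $\theta^{\mathrm{past}}(c)$ is the apparent position of $p$ seen through the hyperbolic extension of $B(q,b_j)$; by the normalization it equals $0$ for $c = c_{p,q}$, so for other geodesics $c$ from $p$ its deviation from $0$ measures only the bending the geodesic undergoes in the variable-curvature region between the disjoint horoballs $B(p,b_i)$ and $B(q,b_j)$. I would bound this uniformly by expressing $\theta^{\mathrm{past}}$ as a continuous function of the pair $(x_0, v_0)$ through the $\Gamma_q$-invariant hyperbolic structure of $B(q,b_j)$, then descending $(x_0, v_0)$ under the projection $\pi:\widetilde{M} \to M$ into the compact subset $\{(x,v) \in SM : x \in \pi(H(q,b_j)) \simeq \mathbb{T}^d_j,\ \langle v, \nabla G_q\rangle \leq 0\}$ of the unit tangent bundle, and combining this with $\Gamma$-equivariance and the finiteness of cusps. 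The main obstacle is that $\nabla G_p$ itself is not $\Gamma_q$-invariant (it depends on the source $p$), but the virtual past endpoint depends only on the datum $(x_0, v_0)$ via the $\Gamma_q$-invariant hyperbolic structure on $B(q,b_j)$, which legitimates the compactness reduction.
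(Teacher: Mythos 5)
Your characterization $\mathcal{I} = \{x : \langle \nabla G_p(x), \nabla G_q(x)\rangle < 0\}$, the boundedness argument for fixed $p,q$, and the hyperbolic computation $|x_0 - \theta^{\mathrm{past}}(c)| = b_j\tan(\alpha/2) \leq b_j$ are all correct (the last one in particular is a clean reformulation of the entry-vs-exit geometry inside $B(q,b_j)$). The relative compactness step is handled differently from the paper, which uses the visual topology and the complement of the exit set, but your version is equally valid and arguably more direct.

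The gap is in the uniform bound, which is precisely what you flag as ``the main content of the lemma.'' Your reduction replaces ``bound $|x_0|$'' by ``bound $|\theta^{\mathrm{past}}(c)|$,'' but since you have just shown $|x_0 - \theta^{\mathrm{past}}(c)| \leq b_j$, these two quantities differ by at most $b_j$, so the reduction is circular: nothing has been gained. The proposed fix via ``descending $(x_0,v_0)$ to a compact subset of $SM$'' cannot repair this, because the quantity you need to bound is $|\theta^{\mathrm{past}}(c) - 0_\theta|$, and the base point $0_\theta$ (the foot of $p$ on $H(q,b_j)$) is a choice of $\Gamma_q$-orbit representative: the map $(x_0,v_0) \mapsto \theta^{\mathrm{past}}(c)-0_\theta$ is translated by $\gamma \in \Gamma_q$ when $(x_0,v_0)$ is, so it is not $\Gamma_q$-invariant and does not descend to a function on the compact quotient. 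What \emph{does} descend is the displacement $\theta^{\mathrm{past}}(c)-x_0$, which is already the quantity you bounded by $b_j$, and this is independent of $p$. In other words, the compactness argument only ``sees'' the horoball $B(q,b_j)$ and cannot detect how far the entry point $x_0$ is from the axis $c_{p,q}$.

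What is missing is a genuinely two-ended estimate controlling how much a family of geodesics emanating from a single ideal point $p$ can spread out in the variable-curvature region before it reaches the far horoball $B(q,b_j)$. This is where the pinching hypothesis must be used. The paper's proof takes a tangency point $\theta \in \partial\mathcal{I}$, lets $p'$ be the far ideal endpoint of the geodesic through $p$ and $\theta$, compares $d(0_\theta,\theta)$ with $d(0_\theta,\theta')$ via obtuse-triangle (Toponogov) estimates, and then invokes Theorems~4.6 and~4.9 of Heintze--Im\,Hof, which are precisely the uniform comparison results in pinched negative curvature that deliver the bound. Your proof needs an input of that type; without it the statement you actually establish is the fixed-$(p,q)$ compactness, not the uniform one.
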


\begin{proof}
First, we prove it is compact. By continuity, $\mathcal{I}$ contains a small neighbourhood $U$ of $0_\theta$. Let $U'$ be the set of exit points of geodesics whose entry point is in $U$. It is a neighbourhood of $q$ in $H(q,b_j)$ --- by definition of the \emph{visual} topology on $\overline{M}$. The complement of $U'$ has to contain $\mathcal{I}$, and it is compact, so $\mathcal{I}$ is relatively compact. The claim follows because $\mathcal{I}$ is closed.

Now, since $G_p$ is $C^\infty$, and the horosphere is smooth, the boundary of $\mathcal{I}$ only contains points where $\nabla G_p$ is tangent to $H(q,b_j)$. Take such a point $\theta$, and consider the triangle with vertices $p$, $0_\theta$, and $\theta$. Let $\alpha$ be the angle at $0_\theta$, and $L$ the distance between $0_\theta$ and $\theta$ in $ \widetilde{M} $. Since the horoball $B(q,b_j)$ is convex, $\alpha > \pi /2$. From the remark on obtuse triangles \eqref{eq:obtuse-triangle} for $[p, \theta, 0_\theta]$, if $l= G_p(\theta) - G_p(0_\theta)$, we have $L-l = \mathcal{O}(1)$. 

\begin{figure}[h]
%\begin{wrapfigure}{r}{0.35\textwidth}
\centering
\def\svgwidth{0.40\textwidth}
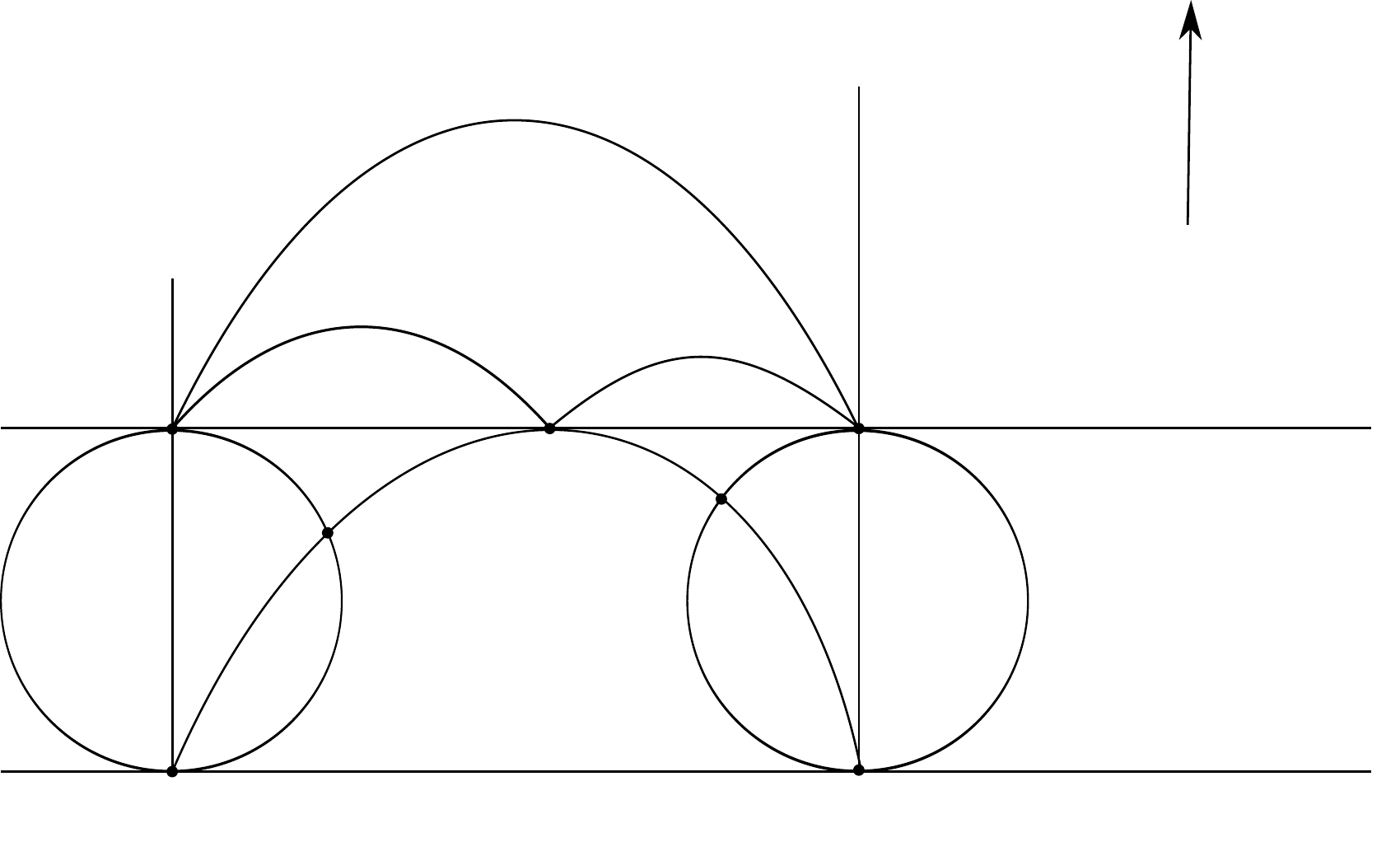
\vspace*{-10pt}
\caption{\label{fig:zones_resonances} the situation.}
%\end{wrapfigure}
\end{figure}

We want to prove that $L$ is bounded independently from $p$, $q$. To see that, consider $p'$ the other endpoint of the geodesic through $p$ and $\theta$, and $\theta'$ its projection on $H(q,b_j)$. Let $l' = G_{p'}(\theta) - G_{p'}(\theta')$ and let $L'$ be the distance in $ \widetilde{M} $ between $\theta$ and $\theta'$. By the same argument, $L' = l' + \mathcal{O}(1)$. Moreover, $l+l'$ is the distance between $H(p,G_p(0_\theta))$ and $H(p', G_{p'}(\theta'))$. So that if $L''$ is the distance between $0_\theta \in H(p,G_p(0_\theta))$ and $\theta'\in H(p',G_{p'}(\theta'))$, $L'' > l+l'$. However, by the triangle inequality, $L'' \leq L+ L'$. We deduce that $L''= L+ L' + \mathcal{O}(1)$. By theorem 4.9 and 4.6 of \cite{Heintze-ImHof}, $L''$ is bounded by constants depending only on the pinching of $M$, and so is $L$.

Additionally, from equation \eqref{eq:length-upper-space}, we deduce:
\begin{equation*}
|\theta| = 2 b_j \sinh \frac{L}{2}.
\end{equation*}
\end{proof}

Our second lemma is the following:
\begin{lemma}\label{lemma:non-degeneracy}
In $\mathcal{I}$, $G_p$ is convex. That is, on $\mathcal{I}$, if $\overline{\alpha}$ is the angle $\nabla G_p$ makes with the horosphere $H(q,b_j)$, we have $d_\theta^2 G_p \geq (\sin \overline{\alpha} + \sin^2 \overline{\alpha} K_{min})/b_j^2$.
\end{lemma}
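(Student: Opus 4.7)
My plan is to relate the Euclidean Hessian $d_\theta^2 G_p$ successively to the intrinsic Hessian on $H(q,b_j)$ and then to the ambient Hessian on $\widetilde{M}$, bounding each term with the help of constant curvature in $B(q,b_j)$ and Rauch comparison applied to horospheres based at $p$.

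First, since $B(q,b_j)$ has constant curvature $-1$ and $H(q,b_j)$ lies in its closure, the induced metric on $H(q,b_j)$ is the flat metric $b_j^{-2}\,d\theta^2$, and the shape operator of $H(q,b_j)$ for the hyperbolic-unit outward normal $n_q=\nabla G_q$ is the identity. The standard Gauss identity then reads, for $X$ tangent to $H(q,b_j)$,
\[
\mathrm{Hess}^{H}G_p(X,X) \;=\; \mathrm{Hess}^{\widetilde{M}}G_p(X,X) \;-\; |X|^2\,\langle n_q,\nabla G_p\rangle .
\]
At $x\in\mathcal I$, the geodesic from $p$ enters $B(q,b_j)$, so $\nabla G_p$ points into the horoball; together with the definition of $\overline\alpha$ this forces $\langle n_q,\nabla G_p\rangle = -\sin\overline\alpha$. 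Hence $\mathrm{Hess}^{H}G_p(X,X) = \mathrm{Hess}^{\widetilde{M}}G_p(X,X) + |X|^2\sin\overline\alpha$.

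Second, $G_p$ is a Busemann function, so $\nabla G_p\in\ker\mathrm{Hess}^{\widetilde{M}}G_p$, and the restriction of $\mathrm{Hess}^{\widetilde{M}}G_p$ to $(\nabla G_p)^\perp$ is the shape operator $\mathbb U_p$ of horospheres based at $p$. The Riccati equation $\mathbb U_p'+\mathbb U_p^2+K=0$ combined with the pinching $-k_{\max}^2\le K\le -k_{\min}^2$ (Rauch / Heintze--ImHof) yields $\mathbb U_p\ge k_{\min}\mathbf 1$. Writing $X=\langle X,\nabla G_p\rangle\nabla G_p+X'$ and noting that for hyperbolic-unit $X\in T_xH(q,b_j)$ the bound $|\langle X,\nabla G_p\rangle|\le\cos\overline\alpha$ holds (since $\nabla G_p$ makes angle $\overline\alpha$ with the tangent plane), one gets $|X'|^2\ge\sin^2\overline\alpha$ and therefore
\[
\mathrm{Hess}^{\widetilde{M}}G_p(X,X)\;=\;\mathbb U_p(X',X')\;\ge\; k_{\min}\sin^2\overline\alpha .
\]

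Combining the two steps gives $\mathrm{Hess}^{H}G_p(X,X)\ge \sin\overline\alpha+k_{\min}\sin^2\overline\alpha$ for every hyperbolic-unit $X\in T_xH(q,b_j)$. Because $H(q,b_j)$ is Euclidean flat in the $\theta$-chart with conformal factor $1/b_j$, the Euclidean Hessian $d_\theta^2 G_p$ equals $b_j^{-2}$ times the intrinsic bilinear form $\mathrm{Hess}^{H}G_p$, which yields $d_\theta^2 G_p\ge(\sin\overline\alpha+K_{\min}\sin^2\overline\alpha)/b_j^2$ as claimed. The one point that really needs care is the sign of $\langle n_q,\nabla G_p\rangle$, which is what distinguishes entry points of $\mathcal I$ from exit points and explains why the convexity statement is restricted to $\mathcal I$.
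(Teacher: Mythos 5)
Your proof is correct, and it takes a genuinely different route from the paper's. The paper argues by Toponogov comparison: it takes a nearby point $\theta' = \theta + \epsilon\mathbf{u}$, compares the triangle $[\theta,\theta',\tilde p]$ with the model space of curvature $-k_{\min}^2$, lets $\tilde p\to p$ to turn the hyperbolic law of cosines into an inequality for $G_p(\theta')-G_p(\theta)$, and then Taylor-expands in $\epsilon$ to read off the Hessian bound; the term $\sin\overline\alpha/b_j^2$ there comes from computing $(\cos\alpha)'(0)$ in the hyperbolic half-space. Your argument instead decomposes $d_\theta^2 G_p$ structurally: the Gauss identity $\mathrm{Hess}^H G_p = \mathrm{Hess}^{\widetilde M}G_p - \langle n_q,\nabla G_p\rangle\,\mathrm{id}$ produces the $\sin\overline\alpha$ contribution directly from the second fundamental form of the horosphere, and the Riccati comparison $\U_p \geq k_{\min}\mathbf{1}$ for the unstable shape operator (i.e. $\nabla^2 G_p$ on $(\nabla G_p)^\perp$, which the paper uses elsewhere via Heintze--ImHof) gives the $k_{\min}\sin^2\overline\alpha$ contribution. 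You get the same constants, and your presentation makes transparent both \emph{where} each term comes from and \emph{why} the restriction to entry points $\mathcal I$ is needed (it pins down the sign of $\langle n_q,\nabla G_p\rangle$). The paper's Toponogov route is more elementary and does not need the a-priori $C^\infty$ regularity of $G_p$ along $H(q,b_j)$; your route is cleaner once that regularity and the identification $\nabla^2 G_p|_{\perp}=\U_p$ are in hand, and it sits more naturally alongside the Jacobi-field computations of $C_0$ done later in the same section.
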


\begin{proof}
Let $\theta \in \mathcal{I}$. Take $\mathbf{u} \in \R^d$ with $|u| = 1$ and $\theta' = \theta+  \epsilon \mathbf{u}$ for $\epsilon>0$ small. We apply Topogonov's theorem to the triangle with vertices $\theta$, $\theta'$ and $\tilde{p}$, where $\tilde{p}$ is a point that will tend to $p$. Let $\alpha(\epsilon)$ be the angle at $\theta$. Then by comparison, we have
\begin{equation*}
\begin{split}
\cosh (K_{min} d(\theta',\tilde{p})) \geq \cosh(K_{min} & d(\theta, \tilde{p})) \cosh(K_{min} d(\theta, \theta'))\\
			& - \sinh(K_{min} d(\theta, \tilde{p})) \sinh(K_{min} d(\theta, \theta'))\cos \alpha(\epsilon).
\end{split}
\end{equation*}
As we let $\tilde{p}\to p$,
\begin{equation*}
\frac{\cosh(K_{min}d(\theta',\tilde{p}))}{\cosh(K_{min}d(\theta, \tilde{p}))} \text{  and  } \frac{\cosh(K_{min}d(\theta',\tilde{p})) }{ \sinh(K_{min}d(\theta, \tilde{p}))} \to \exp( K_{min}(G_p(\theta') - G_p(\theta)))
\end{equation*} 
and
\begin{equation*}
K_{min}(G_p(\theta') - G_p(\theta)) \geq \log \left[ \cosh(K_{min} d(\theta, \theta')) - \sinh(K_{min} d(\theta, \theta'))\cos \alpha(\epsilon) \right].
\end{equation*}
Now, we let $\epsilon$ go to $0$. We have $G_p(\theta') - G_p(\theta) = \epsilon\nabla G_p(\theta).\mathbf{u} + \epsilon^2 d_\theta^2 G_p(\theta).\mathbf{u}^{\otimes 2}/2 + o(\epsilon^2)$. Additionally, $\nabla G_p(\theta).\mathbf{u} = -\cos \alpha(0)/b_j$ and $d(\theta, \theta')\sim \epsilon /b_j$ by \eqref{eq:length-upper-space}, so the RHS becomes
\begin{equation*}
\log \left[ 1 - \frac{K_{min} \epsilon \cos\alpha(\epsilon)}{b_j} +  \frac{K_{min}^2 \epsilon^2}{2b_j^2}  + o(\epsilon^2)\right]= - \frac{ \epsilon K_{min}}{b_j} \cos\alpha(\epsilon) + \frac{\epsilon^2 K_{min}^2}{2b_j^2}(1   - \cos \alpha(0)^2  + o(1)).
\end{equation*}
And we deduce
\begin{equation*}
d_\theta^2 G_p(\theta). \mathbf{u}^{\otimes 2} \geq  \frac{K_{min}}{b_j^2} \sin^ 2 \alpha(0) + 2\frac{(\cos \alpha)'(0)}{b_j}.
\end{equation*}
Now, computing in the hyperbolic space, we find that the angle $\beta$ at which the geodesic between $\theta$ and $\theta'$ intersects $H(q,b_j)$ satisfies $\beta \sim \epsilon /2b_j$. If $\overline{\alpha}$ is the angle between $\nabla G_p(\theta)$ and $H(q,b_j)$, we find
\begin{equation*}
\cos \alpha = \cos \overline{\alpha}\cos \measuredangle( u, \partial_\theta G_p)  + \sin \overline{\alpha} \sin \beta \quad \text{ and }\quad (\cos \alpha)'(0) = \frac{\sin \overline{\alpha}}{2 b_j}.
\end{equation*}
Finally, we can observe that $\alpha(0) \geq \overline{\alpha}$ and
\begin{equation*}
d_\theta^2 G_p(\theta) \geq \frac{\sin \overline{\alpha} + K_{min} \sin^2 \overline{\alpha}}{b_j^2}.
\end{equation*}
\end{proof}

We have to separate the integral \eqref{eq:stationary_1} into two parts, let us explain how we choose them. The stable and the unstable distributions of the flow $\varphi_t$ are always transverse. Since they are continuous, the angle between them is uniformly bounded by below by some $\alpha >0$ in any given compact set of $M$ --- we say that they are uniformly transverse. Lifting this to $ \widetilde{M} $, the angle is uniformly bounded by below on sets that project to compact sets in $M$. In particular, this is true on the union of the $H(q,b_j)$ for $q\in \Lambda^j_{par}$. 

Now, we can also consider the geodesic flow in the hyperbolic space of dimension $d+1$. It has stable and unstable distributions. The cusp $Z_j$ is the quotient of an open set of that space by a group of automorphisms, so that those stable and unstable distributions project down to subbundles $E^s_{hyp}$, $E^u_{hyp}$ of $TS^\ast Z_i$, invariant by the geodesic flow. We call them the $\ast$-stable and $\ast$-unstable manifolds of $Z_i$. The angle between them is constant equal to $\pi/2$, and they are smooth --- even analytic.\label{page:ast-stable}

By definition of the stable and $\ast$-stable manifolds, if the trajectory of a point $\xi \in S^\ast Z_i$ stays in $Z_i$ for all times positive, its stable and $\ast$-stable manifolds coincide. This is the case of $(0, dG_p)$. As a consequence, there is a small neighbourhood $V_1$ of $0$ in the $\theta$ plane, whose size can be taken independent from $p,q$, where the unstable manifold of $(\theta, dG_p(\theta))$ and its $\ast$-stable manifolds are uniformly transverse.

By the arguments in the proof of lemma \ref{lemma:point-entry-compact}, we see that the set of points of $H(q,b_j)$ that are not exit points of geodesics whose entry points are in $V_1$, is a compact set. Denote it by $V_2$. Its radius is also bounded independently from $p$ and $q$. Now, let $\chi\in C^\infty_c(\R^d)$ take value $1$ on $V_2$, and introduce $1= \chi + (1-\chi)$ in \eqref{eq:stationary_1}, to separate it into $(\mathrm{I})$ and $(\mathrm{II})$.

From theorem 7.7.5 (p.220) in H\"ormander \cite{Hormander-1}, we deduce 
\begin{lemma}\label{lemma:asymptotic_expansion_scattering}
For each $p,q$, there are coefficients $A_n(p,q)$ so that for every $ N \geq 1$
\begin{equation*}
\begin{split}
(\mathrm{I}) = \left(\frac{\pi}{s}\right)^{d/2}& \exp\left\{\int_{c_{p,q}} V_0-s \mathcal{T}(c_{p,q})\right\} \\
	&\left[\sum_{n \leq  N -1} \frac{A_n(p,q)}{s^n}   + \frac{1}{s^{ N }} \mathcal{O}\left( (1+(\mathcal{T}(c_{p,g})+ \log b_j)^+)^{ N } \right)\right].
\end{split}
\end{equation*}
We have $A_0(p,q) = 1$, and $A_n(p,q) = \mathcal{O}(1+ (\mathcal{T}(c_{p,q})+ \log b_j)^+)^n$. What is more, the $A_n$ do not depend on $ N $ for $n\leq  N -1$.
\end{lemma}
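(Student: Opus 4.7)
My plan is to apply the method of stationary phase to the integral
\begin{equation*}
(\mathrm{I})=\int_{\R^d}\chi(\theta)\, e^{-s\phi(\theta)}\, a(\theta,s)\,\mathrm{d}\theta,
\end{equation*}
with phase $\phi=G_p-\log b_j$ and amplitude $a(\theta,s)=\tilde J_p(\theta)\, f_p^N(s,\theta)$. The cutoff $\chi$ has been constructed precisely so that its support isolates a neighbourhood of the unique critical point of $\phi$ on $H(q,b_j)$, namely the entry point $0_\theta$ of the scattered geodesic $c_{p,q}$, where $\nabla G_p\perp H(q,b_j)$ (so that $\overline{\alpha}=\pi/2$ in the notation of Lemma \ref{lemma:non-degeneracy}). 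By Lemma \ref{lemma:point-entry-compact} this critical point lies in a set whose radius is uniformly bounded in $p,q$, and by Lemma \ref{lemma:non-degeneracy} the Hessian $\mathrm{d}_\theta^2 G_p(0_\theta)$ is positive definite with a bound uniform in $p,q$. Since $B(q,b_j)$ has constant curvature $-1$ by \eqref{eq:def-b-i}, one can compute this Hessian exactly in the upper half-space model and obtain $\mathrm{d}_\theta^2 G_p(0_\theta)=(2/b_j^2)\,\mathbf{1}$, hence $\sqrt{\det\phi''(0_\theta)}=2^{d/2}/b_j^d$.

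H\"ormander's stationary phase theorem (Theorem 7.7.5 in \cite{Hormander-1}) then yields an expansion
\begin{equation*}
\int\chi\, e^{-s\phi}\,a\,\mathrm{d}\theta=\Big(\frac{2\pi}{s}\Big)^{d/2}\frac{e^{-s\phi(0_\theta)}}{\sqrt{\det\phi''(0_\theta)}}\left[\sum_{n=0}^{N-1}\frac{\tilde A_n(p,q)}{s^n}+s^{-N}R_N\right],
\end{equation*}
where $\tilde A_n(p,q)$ is a universal polynomial of degree $2n$ in the derivatives of $a$ at $0_\theta$ with coefficients built from $(\phi''(0_\theta))^{-1}$. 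The contributions of the successive WKB terms $s^{-k}f_{k,p}$ and of the stationary-phase expansion in $1/s$ are collected together; in particular $\tilde A_0(p,q)=\tilde J_p(0_\theta)$, coming from $f_{0,p}\equiv 1$. Substituting $\phi(0_\theta)=\mathcal{T}(c_{p,q})$ (from the normalization of $G_p$ in remark \ref{remark:choice_normalization_horocycle} and \eqref{eq:def-T}) and the Hessian determinant above produces the prefactor $(\pi/s)^{d/2}\, b_j^d\, e^{-s\mathcal{T}(c_{p,q})}$. The normalization $A_0(p,q)=1$ in the statement then reduces to the geometric identity $b_j^d\,\tilde J_p(0_\theta)=\exp\int_{c_{p,q}}V_0$, which I would obtain from \eqref{eq:J_and_Jacobi_fields} combined with \eqref{eq:unstable_jacobian_Jacobi_fields}, using the asymptotic \eqref{eq:formula-U} that $\mathbb U\to\mathbf{1}$ in the cusps of constant curvature so that $\sqrt{\det(\mathbf{1}+\mathbb U^2)}\to 2^{d/2}$ at both ends of $c_{p,q}$, together with the fact that $V_0=(F^{su}+d)/2$ is integrable at both cusp ends.

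For the higher-order $A_n$ and for the remainder, Lemma \ref{lemma:estimate_f} provides the crucial polynomial bounds $\|f_{n,p}\|_{\mathscr{C}^k(\{G_p\le\tau-\log b_i\})}\le C_{n,k}\tau^n$ with $\tau=\mathcal{T}(c_{p,q})+\log b_j$. Since $\tilde A_n$ involves at most $2n$ derivatives of $\tilde J_p f_p^N$ at $0_\theta$ and the derivatives of $\tilde J_p$ are bounded by Lemma \ref{lemma:Regularity_Jacobian}, one gets $\tilde A_n(p,q)=\mathcal{O}((1+\tau^+)^n)$, and the same bound transfers to $A_n(p,q)$ after absorbing the leading geometric factor into $\exp\int_{c_{p,q}}V_0$. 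The remainder in H\"ormander's theorem is bounded by $s^{-N}$ times a $\mathscr{C}^{2N+d+1}$ norm of $a$ (with constants depending only on the uniform bounds on $\phi''$ and its derivatives), yielding the claimed $s^{-N}\cdot\mathcal{O}((1+\tau^+)^N)$ error. The main obstacle is the exact constant matching $A_0=1$: proving $b_j^d\,\tilde J_p(0_\theta)=\exp\int_{c_{p,q}}V_0$ requires tracking carefully the Wronskian identity \eqref{eq:Wronskian} and the matching of unstable-Jacobi-field normalizations at the transition between variable and constant curvature at $0_\theta$; once this is settled, the rest of the proof is a routine (if bookkeeping-heavy) application of stationary phase carried out uniformly in $p,q$.
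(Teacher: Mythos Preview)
Your overall approach---stationary phase via H\"ormander's Theorem 7.7.5, together with the $\mathscr{C}^k$ bounds from Lemma \ref{lemma:estimate_f}---is exactly the paper's strategy, and your treatment of the higher $A_n$ and the remainder is fine. But there is a genuine error in your computation of the Hessian.

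You write that ``since $B(q,b_j)$ has constant curvature $-1$, one can compute this Hessian exactly in the upper half-space model and obtain $\mathrm{d}_\theta^2 G_p(0_\theta)=(2/b_j^2)\mathbf{1}$''. This is false in variable curvature. The Busemann function $G_p$ is determined by the geodesics emanating from $p$, and those geodesics traverse the variable-curvature region before reaching $H(q,b_j)$. The local constancy of the curvature inside $B(q,b_j)$ tells you nothing about $\nabla^2 G_p$ there; it only tells you about $\nabla^2 G_q$. In fact the paper shows, using the trick that $\nabla(G_p+G_q)=0$ along $c_{p,q}$ so that $d^2_\theta(G_p+G_q)=\nabla^2(G_p+G_q)$, that
\[
d^2_\theta G_p(0_\theta)=\U_{(0_\theta,\nabla G_p)}-\Ss_{(0_\theta,\nabla G_p)},
\]
and only the \emph{stable} part $\Ss$ equals $-\mathbf{1}$ (because the forward trajectory stays in constant curvature), while $\U$ genuinely depends on the backward trajectory through the variable-curvature part. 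Your formula $(2/b_j^2)\mathbf{1}$ is correct only when the whole manifold has constant curvature.

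Consequently your proposed identity $b_j^d\,\tilde J_p(0_\theta)=\exp\int_{c_{p,q}}V_0$ is also wrong as stated: neither factor in your splitting is what you claim, although their ratio $C_0=\tilde J_p(0_\theta)/\sqrt{\det d^2_\theta G_p(0_\theta)}$ does equal $2^{-d/2}\exp\int V_0$. The paper obtains this by expressing $\tilde J_p^2(0_\theta)=e^{dG_p(0_\theta)}/\det\J_u$ and then combining $\det\J_u$ and $\det(\U-\Ss)$ via the Wronskian constant $W=\J_u^T(\U-\Ss)\J_s$, which it evaluates as $2A$ at $t\to+\infty$ and relates to $\exp\int 2V_0$ at $t\to-\infty$. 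You correctly suspected the Wronskian identity is needed; the point is that it enters already in the Hessian, not just in $\tilde J_p$. A secondary omission: to apply H\"ormander uniformly you also need the lower bound on $|\partial_\theta G_p|$ on $\operatorname{supp}\chi\setminus\{0_\theta\}$, which the paper establishes separately for exit points (via sojourn-time considerations) and for entry points (via the convexity of Lemma \ref{lemma:non-degeneracy} and a local-inversion argument).
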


\begin{proof}
From lemma \ref{lemma:estimate_f}, we already know that the functions under the integral are smooth, uniformly in $p,q$. From lemma \ref{lemma:non-degeneracy}, we know that the phase is non-degenerate at $0$. To apply H\"ormander's theorem, we need to check that the derivative $|\partial_\theta G_p|$ is uniformly bounded from below in $V_2 - V_1$.

The general observation is that $|\partial_\theta G_p|$ remains bounded from below if $\nabla G_p$ stays away from being the outer normal to $H(q,b_j)$.

Start with $\theta \in V_2$ an exit point. Consider $c$ the geodesic along $\nabla G_p$, going out at $\theta$. The closer to the outer normal $\nabla G_p(\theta)$ is, the longer the time $c$ had to spend in the horoball. Since the set of entry points is uniformly compact, this implies that points where $\nabla G_p$ is almost vertical --- i.e along $\partial_y$ --- have to be far from $0$. But $V_2$ is uniformly bounded, so $|\partial_\theta G_p|$ is bounded by below on the exit points in $V_2$. 

For the entry points that are not exit points, we use the uniform convexity from lemma \ref{lemma:non-degeneracy}. By that lemma, $\partial_\theta G_p$ is a local diffeomorphism in $\mathcal{I}'=\{\theta,\: \nabla G_q(\theta). \nabla G_p(\theta) <0\}$. On the boundary of $\mathcal{I}'$, $|\partial_\theta G_p| = b_j^2$. By continuity, there is $\epsilon>0$ such that $|\partial_\theta G_p(\theta)| < b_j^2 /2$ implies $d(\theta, \partial \mathcal{I}') > \epsilon$. As a consequence, from the local inversion theorem, there is $0< \epsilon' < \epsilon$ and $\epsilon''>0$ such that if $|\partial_\theta G_p(\theta)| < b_j^2 /2$, 
\begin{equation*}
B(\partial_\theta G_p(\theta), \epsilon'') \subset \partial_\theta G_p (B(\theta, \epsilon')).
\end{equation*}
Then, when $|\partial_\theta G_p(\theta)| < \epsilon''$, $\theta$ has to be at most at distance $\epsilon'$ from a zero of $\partial_\theta G_p$, i.e $0_\theta$.

The constants $\epsilon'$ and $\epsilon''$ can be estimated independently from $p$ and $q$.

Now, we have an expansion
\begin{equation}
(I) = \left(\frac{2\pi}{s}\right)^{d/2} \exp\left\{-s\mathcal{T}(c_{p,q})\right\}\left(C_0 + \frac{1}{s} C_1 + \dots \right)
\end{equation} 
We have 
\begin{equation}\label{eq:formula-C_0}
C_0 = \frac{\tilde{J}_p(0_\theta)}{(\det d^2_\theta G_p(0_\theta))^{1/2}}. 
\end{equation}
We factor out $C_0$ from the sum, and define $A_n(p,q) = C_n / C_0$. From lemma \ref{lemma:estimate_f} and the fact that $\nabla F_p$ is $\mathscr{C}^\infty( \widetilde{M} )$, it is quite straightforward to prove the estimates on the $A_n$'s.

Now, we have to compute $C_0$. From \cite[proposition 3.1]{Heintze-ImHof}, we see that
\begin{equation}\label{hessian-G_p}
\nabla^2 G_p (x)= \U_{x, \nabla G_p(x)}.
\end{equation}
Where $\U$ was introduced after equation \eqref{eq:coordinates-E^u}. 

We use a simple trick. Along the geodesic $c_{p,q}$, $\nabla (G_p + G_q) = 0$, so that the Hessian $d^2 (G_p + G_q)$ is well defined along $c_{p,q}$. This implies that $d^2_\theta (G_p + G_q) = \nabla^2 (G_p + G_q)$. But on the horosphere $H(q,b_j)$, $G_q$ is constant, and we find $d^2_\theta G_p(0_\theta) = \nabla^2 G_p(0_\theta) + \nabla^2 G_q(0_\theta)$. 

The unstable Jacobi fields along $c_{q,p}$ are the \emph{stable} Jacobi fields along $c_{p,q}$ so $\U_{x, \nabla G_q(x)} = - \mathbb{S}_{x, \nabla G_p(x)}$. Hence,
\begin{equation}
d^2_\theta G_p(0_\theta) = \U_{x, \nabla G_p(x)} - \mathbb{S}_{x, \nabla G_p(x)}.
\end{equation}
In constant curvature, this is the constant matrix $2\times \mathbf{1}$. 

Now, we give another expression for $\tilde{J}^2_p(0_\theta)$. Let $x\in  \widetilde{M} $. Consider $\J_u$ \label{page:def-J_u}the unstable Jacobi field along $(\varphi^p_t(x))$, that equals $(1/y)\mathbf{1}$ for a point along the orbit that is close enough to $p$ --- where $y$ is the height coordinate $\exp - G_p$. Then 
\begin{equation}\label{eq:formula-J}
\tilde{J}^2_p(x) = \frac{e^{d. G_p(x)}}{ \det \J_u(x)}.
\end{equation}
When $x= 0_\theta$, for $t> 0$, we can write $\J_u(\varphi^p_t(0_\theta)) = A e^t + B e^{-t}$. We can also define $\J_s$ the stable Jacobi field along $\varphi^p_t(0_\theta)$ that equals $\mathbf{1}$ at $0_\theta$. From the equation \eqref{eq:Wronskian}, we find that
\begin{equation}\label{eq:Wronskian-2}
W :=\J_u(t)^T (\U(t) - \mathbb{S}(t)) \J_s(t) \text{ is constant.}
\end{equation}
Hence
\begin{equation*}
C_0^2 = \frac{e^{d. G_p(0_\theta)}}{\det W} \det \J_s(0).
\end{equation*}
The limit for $t\to + \infty$ gives $W = 2 A $. Whence
\begin{equation}
C_0^2 = \frac{e^{d. G_p(0_\theta)}}{2^d \det A}.
\end{equation}
On the other hand, 
\begin{align*}
\exp\left\{ \int_p^q -2 V_0\right\} &= \lim_{t \to +\infty} \det\left( d {\varphi_t}_{| E^u(v)}\right) e^{-t d} \text{ for $v \in [p,q]$ sufficiently close to $p$.} \\
				& = \det\left\{ A e^{-G_p(0_\theta)}\right\} \text{ from formulae \eqref{eq:unstable_jacobian_Jacobi_fields} and \eqref{eq:formula-U}.}
\end{align*}
We conclude that
\begin{equation}
C_0 = 2^{-d/2}\exp\left\{\int_{c_{p,q}} V_0 \right\}.
\end{equation}
\end{proof}

\subsection{Estimating the remainder terms}

Now, we consider 
\begin{equation}\label{eq:integrating-by-parts}
(\mathrm{II}) := \int_{\R^d} e^{-s (G_p- \log b_j)} \tilde{J}_p f_p^N  (s,\theta)(1-\chi) d\theta  = \frac{1}{s^k} \int_{\R^d}  e^{-s G_p} L_q^k(\tilde{J}_p f_p^N  (s,\theta)(1-\chi))d\theta
\end{equation}
where $L_p f = \mathrm{div} \left[\frac{\partial_\theta G_p}{\|\partial_\theta G_p\|^2} f \right]$. This holds for any $k\in \N$; if we get symbolic estimates on the integrand, we will find that $(\mathrm{II}) = \mathcal{O}(s^{-\infty})(\mathrm{I})$. Our next step is to study the growth of $G_p$ as $\theta \to \infty$.

\begin{lemma}\label{lemma:uniform_integrability}
The function $\frac{\partial_\theta G_p}{\| \partial_\theta G_p \|}$ is a symbol of order $1$ in $\theta$ in $\R^d - V_2$, bounded independently from $p,q$.

Additionally, $\exp(-s(G_p - \log b_j))$ is integrable, and for any $\epsilon>0$, there is a constant $C_\epsilon>0$ such that whenever $\Re s > d/2 + \epsilon$,
\begin{equation*}
\int_{\R^d - V_2} e^{-s(G_p - \log b_j)} d\theta \leq C_\epsilon e^{-\Re s \mathcal{T}(c_{p,q})}.
\end{equation*}

\end{lemma}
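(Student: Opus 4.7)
The plan is to exploit the constant curvature $-1$ of $B(q,b_j)$ and to derive closed-form expressions for $G_p$ on $H(q,b_j)$ via the scattering picture inside the horoball. Working in the upper half-space model with $q$ at infinity and isometric coordinate $\theta$ on $H(q,b_j)$, for each $\theta \in \R^d - V_2$ the geodesic from $p$ through $(b_j,\theta)$ enters $B(q,b_j)$ at a unique point $\theta_0(\theta) \in V_1 \subset \mathcal{I}$, then follows a hyperbolic semicircle inside the horoball before exiting at $(b_j,\theta)$. Standard computations give, with $\beta$ the entry angle with the vertical, the relations $|\theta-\theta_0| = 2\cot\beta$, $\sin\beta = \|\partial_\theta G_p(\theta_0)\|$, the arclength inside the horoball
\begin{equation*}
\ell(\theta) = G_p(\theta) - G_p(\theta_0) = 2 \log \frac{\sqrt{4+|\theta-\theta_0|^2}+|\theta-\theta_0|}{2},
\end{equation*}
and the horospherical component of the exit tangent
\begin{equation*}
\partial_\theta G_p(\theta) = \frac{2 (\theta-\theta_0)}{|\theta-\theta_0| \sqrt{4 + |\theta-\theta_0|^2}}, \qquad \|\partial_\theta G_p\|^2 = \frac{4}{4 + |\theta-\theta_0|^2}.
\end{equation*}

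Next I would analyze the inverse scattering map. The forward map $F(\theta_0) = \theta_0 + 2\cot\beta(\theta_0)\,\hat n(\theta_0)$, where $\hat n = \partial_\theta G_p(\theta_0)/\|\partial_\theta G_p(\theta_0)\|$, is smooth on $V_1 \setminus \{0_\theta\}$ since the incoming Lagrangian $W^{u0}(p)$ is $\mathscr{C}^\infty$ (lemma \ref{lemma:Regularity_horocycles}). By lemma \ref{lemma:non-degeneracy} the Hessian $d^2_\theta G_p(0_\theta)$ is bounded below uniformly in $p,q$, so Taylor expansion gives $\sin\beta(\theta_0) \asymp |\theta_0|$ near $0_\theta$ and thus $|F(\theta_0)| \asymp 1/|\theta_0|$; combined with the uniform radius bound on $\mathcal{I}$ (lemma \ref{lemma:point-entry-compact}), the implicit function theorem yields $\theta_0(\theta) = O(|\theta|^{-1})$ and, by iterated differentiation of the identity $F(\theta_0(\theta)) = \theta$, $|\partial_\theta^k \theta_0(\theta)| = O(|\theta|^{-1-k})$ for $k \geq 1$, all uniformly in $p,q$.

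Inserting the symbol behavior of $\theta_0(\theta)$ into the explicit formula
\begin{equation*}
\frac{\partial_\theta G_p}{\|\partial_\theta G_p\|^2} = \frac{\sqrt{4 + |\theta-\theta_0(\theta)|^2}}{2} \cdot \frac{\theta-\theta_0(\theta)}{|\theta-\theta_0(\theta)|},
\end{equation*}
and applying the chain rule, each $\theta$-derivative reduces the order by one, establishing the symbol estimate of order $1$, uniform in $p,q$. For the integral bound, since $0_\theta$ minimizes $G_p$ on $H(q,b_j)$ with value $\mathcal{T}(c_{p,q}) + \log b_j$, one has $G_p(\theta_0) \geq \mathcal{T}(c_{p,q}) + \log b_j$; and for $|\theta| \geq 2 R_{\mathcal{I}}$ we get $|\theta - \theta_0| \geq |\theta|/2$, so $\ell \geq 2\log|\theta| - 2\log 2$ and hence $G_p(\theta) - \log b_j \geq \mathcal{T}(c_{p,q}) + 2\log|\theta| - 2\log 2$. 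This yields $e^{-s(G_p-\log b_j)} \leq 4^{\Re s}\, e^{-\Re s\,\mathcal{T}(c_{p,q})} \langle \theta \rangle^{-2\Re s}$; the integral over $\R^d - V_2$ converges when $2\Re s > d$ and is bounded by $C_\epsilon\, e^{-\Re s\,\mathcal{T}(c_{p,q})}$ for $\Re s > d/2 + \epsilon$, uniformly in $p, q$.

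The main obstacle is keeping all bounds uniform in $p, q$: although $F$ depends on the full geometry of $\widetilde M$ through $\nabla G_p$, the quantitative estimates for its inversion must reduce to constants expressible purely in terms of the curvature pinching and the uniform radii of $V_1$, $V_2$, and $\mathcal{I}$. A secondary concern is the transition region just outside $V_2$, where $|\theta|$ is only moderate; there $|\theta-\theta_0|$ remains bounded below because entry angles over $V_1$ stay uniformly away from $\pi/2$ (so $|\theta-\theta_0| = 2\cot\beta \geq c > 0$), which means $\|\partial_\theta G_p\|^2$ is bounded away from zero and the symbol estimates then follow from smoothness.
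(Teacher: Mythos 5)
Your proposal follows the same geometric picture as the paper's proof: exploit the constant curvature of $B(q,b_j)$ to get closed formulas for $G_p$ and $\partial_\theta G_p$ at the exit point $\theta$ in terms of the entry point $\theta_0$, then reduce both claims to showing that $\theta \mapsto \theta_0(\theta)$ is a symbol of order $-1$, uniformly in $p,q$. Where the two routes diverge is in how that last step is executed. You propose to differentiate the implicit relation $F(\theta_0(\theta))=\theta$ iteratively, where $F$ is the entry-to-exit map; this requires inverting $dF$ near its pole at $0_\theta$ and propagating orders through the chain rule, and — as you flag yourself — the uniformity in $p,q$ is the real obstacle: the central claim $|\partial_\theta^k \theta_0| = \mathcal{O}(|\theta|^{-1-k})$ is stated but not derived. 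The paper sidesteps this by composing with a hyperbolic inversion: it observes that the map sending $\theta_0$ to $\theta/\|\theta\|^2$ is a smooth diffeomorphism near the origin, with derivatives controlled by the angle that $\nabla G_p$ makes with the vertical (hence uniform in $p,q$), so that $\theta_0(\theta)$, being a smooth function of the explicit order $-1$ symbol $\theta/\|\theta\|^2$, inherits the required estimates with no inductive bookkeeping. This is exactly the compactification of your forward map $F$ that turns the pole at $0_\theta$ into a point of local smoothness, and it is the cleanest way to resolve the uniformity issue you identify. Your treatment of the integral, using only the one-sided inequality $G_p(\theta_0) \geq \mathcal{T}(c_{p,q}) + \log b_j$ together with $\ell \geq 2\log(|\theta|/2)$ for $|\theta|$ large, is correct and in fact slightly more direct than the paper's, which instead expands $G_p - \log b_j = 2\log(|\theta|/2b_j) + \mathcal{T}(c_{p,q}) + o(1)$ with an order $-1$ remainder.
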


\begin{proof}
With each $\theta \in \R^d - V_2$ we associate the point of entry $\theta_0$ --- $\theta_0 \in V_1$ by definition. Consider the geodesic coming from $p$, entering the horoball at $\theta_0$ and going out at $\theta$. Then, if $\epsilon$ is the angle of this geodesic with the normal to the horosphere, 
\begin{equation*}
|\theta - \theta_0| = \frac{2 b}{\tan \epsilon}.
\end{equation*}
but, we also have that 
\begin{equation*}
|\partial_{\theta} G_p| = \frac{\sin \epsilon}{b}.
\end{equation*}
Hence
\begin{equation*}
\frac{1}{|\partial_\theta G_p|} = \frac{1}{2} | \theta - \theta_0| \sqrt{1 + \frac{4b^2}{|\theta - \theta_0|^2}},
\end{equation*}
and
\begin{equation*}
\frac{\partial_\theta G_p}{\| \partial_\theta G_p \|^2} = \frac{1}{2} (\theta - \theta_0) \sqrt{1 + \frac{4b^2}{(\theta - \theta_0)^2}}.
\end{equation*}
It suffices to see that $\theta \mapsto \theta_0$ is a symbol of order $-1$ to obtain the first part of the lemma. But $\theta \mapsto \theta_0$ is a one-to-one map, and by means of an inversion in the hyperbolic space, we see that $\theta_0 \to \theta / \| \theta \|^2$ is a smooth map. Its derivatives are controlled by the angle that $\nabla G_p$ makes with the vertical (and its derivatives). As a consequence $\theta \to \theta_0$ is a symbol of order $-1$, uniformly in $p$ and $q$.

Then, using formula \eqref{eq:length-upper-space}, as $\theta \to \infty$, 
\begin{equation*}
G_p = 2 \log \frac{|\theta|}{2b} + G_p(\theta_0) = 2 \log \frac{|\theta|}{2b} + \mathcal{T}(c_{p,q}) + \log b_j + o(1)
\end{equation*}
where the remainder is a symbol of order $-1$. We deduce that $\exp -s G_p$ is integrable ($\Re s > d/2$), and
\begin{equation*}
\int_{\R^d} \mathrm{d}\theta^d e^{-\Re s (G_p-\log b_j)} \leq C e^{- \Re s \mathcal{T}(c_{p,q})}.
\end{equation*}
\end{proof}

\begin{lemma}\label{lemma:uniform-bound-J}
On the horosphere $H(q,b_j)$, $\tilde{J}_p$ is a symbol of order $0$ with respect to $\theta$. In symbol norm, it is $\mathcal{O}(\tilde{J}_p(0_\theta))$.
\end{lemma}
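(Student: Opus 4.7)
The plan is to exploit the fact that $H(q,b_j)$ lies inside the horoball $B(q,b_j)$, where the curvature is constantly $-1$. Inside this horoball the flow $\varphi^p_t$ follows a geodesic of $\Hh^{d+1}$, so unstable Jacobi fields along it take the explicit form $\J_u(t) = Ae^t + Be^{-t}$; combined with $\tilde{J}_p^2 = e^{dG_p}/\det \J_u$ from \eqref{eq:formula-J}, this yields an explicit formula for the ratio of $\tilde{J}_p$ between the exit point $\theta$ and the entry point $\theta_0$ of the geodesic from $p$ crossing $H(q,b_j)$.

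First I would split $H(q,b_j)$ into the compact region $V_2$ (whose diameter is uniformly bounded in $p,q$ by Lemma \ref{lemma:point-entry-compact}) and its complement. On $V_2$, continuity of $\tilde{J}_p$ together with the $\mathscr{C}^\infty$ regularity (\ref{lemma:Regularity_Jacobian}) of $F_p$ gives bounds on $\tilde{J}_p$ and its derivatives in terms of $\tilde{J}_p(0_\theta)$. For $\theta\notin V_2$, let $\theta_0=\theta_0(\theta)\in V_1$ denote the entry point of the geodesic from $p$ through $\theta$, and set $T := G_p(\theta)-G_p(\theta_0)$ and $v_0 := (\theta_0,\nabla G_p(\theta_0))$. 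Solving $A+B=\J_u(\theta_0)$, $A-B=\J_u'(\theta_0)=\U_{v_0}\J_u(\theta_0)$ gives $\J_u(\theta)=(\cosh T\,\mathbf{1}+\sinh T\,\U_{v_0})\J_u(\theta_0)$, hence
\begin{equation*}
\frac{\tilde{J}_p(\theta)^2}{\tilde{J}_p(\theta_0)^2} = \left(\frac{2}{1+e^{-2T}}\right)^d \frac{1}{\det(\mathbf{1}+\tanh T\cdot \U_{v_0})}.
\end{equation*}

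As $\theta_0$ ranges over the uniformly compact set $\mathcal{I}$ of entry points (Lemma \ref{lemma:point-entry-compact}), $\U_{v_0}$ stays in a compact family of symmetric positive-definite matrices (with spectrum contained in $[k_{\min},k_{\max}]$ by Rauch comparison applied to the unstable Jacobi fields); in particular the denominator in the display above is $\geq 1$ uniformly, while the first factor is bounded by $2^d$. Consequently $\tilde{J}_p(\theta)=O(\tilde{J}_p(\theta_0))$, and combining with the $V_2$-estimate $\tilde{J}_p(\theta_0)=O(\tilde{J}_p(0_\theta))$ gives the $L^\infty$ bound.

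The main technical point will be the symbol-type decay of the derivatives. By Lemma \ref{lemma:uniform_integrability}, $G_p(\theta) = 2\log(|\theta|/(2b_j))+\mathcal{T}(c_{p,q})+\log b_j$ modulo a symbol of order $-1$, and the map $\theta \mapsto \theta_0(\theta)$ is itself a symbol of order $-1$ (as shown in that lemma via an inversion in the hyperbolic space). Hence $\partial_\theta T(\theta)$ and $\partial_\theta \U_{v_0(\theta)}$ are both symbols of order $-1$, and the chain rule applied to the explicit expression above shows that $\tilde{J}_p(\theta)^2/\tilde{J}_p(\theta_0(\theta))^2$ is a symbol of order $0$ with seminorms bounded independently of $p,q$. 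Combining with the symbol estimates on $\tilde{J}_p\circ\theta_0$ obtained in the compact-region step concludes the proof.
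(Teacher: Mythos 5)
Your proof is correct, and it takes essentially the same route as the paper's: restrict to the constant-curvature horoball, write the unstable Jacobi field explicitly for $t\in[0,T]$, and feed the symbol-order-$(-1)$ property of $\theta\mapsto\theta_0$ (from Lemma \ref{lemma:uniform_integrability}) into the chain rule. The only difference is the parametrization of $E^u$ at the entry point: the paper represents $E^u(\theta_0)$ as a graph $X^+ = A(\theta_0)X^-$ over $E^u_{hyp}$ into $E^s_{hyp}$ and arrives at $\det\bigl((\mathbf{1}+e^{-2T}A)(\mathbf{1}+A)^{-1}\bigr)$, while you parametrize via the Ricatti matrix $\U_{v_0}=\J_u'(0)\J_u(0)^{-1}$; the two are related by the Cayley transform $\U_{v_0}=(\mathbf{1}-A)(\mathbf{1}+A)^{-1}$, and one can check directly that your formula
\begin{equation*}
\frac{\tilde{J}_p(\theta)^2}{\tilde{J}_p(\theta_0)^2}=\left(\frac{2}{1+e^{-2T}}\right)^d\frac{1}{\det(\mathbf{1}+\tanh T\cdot \U_{v_0})}
\end{equation*}
equals $\det\bigl((\mathbf{1}+A)(\mathbf{1}+e^{-2T}A)^{-1}\bigr)$. (Incidentally, chasing through the definition \eqref{eq:formula-J} shows this is in fact the correct ratio; the fraction in the paper's final display appears to be written upside-down, but this affects neither proof since both expressions are symbols of order $0$.) Your version has the advantage that positivity of $\U_{v_0}$ immediately gives $\det(\mathbf{1}+\tanh T\cdot\U_{v_0})\geq 1$ and hence the uniform upper bound, without needing the spectral pinching; the pinching, and more importantly the uniform smoothness of $\U_{v_0}=\nabla^2 G_p(\theta_0)$ coming from $\nabla G_p\in\mathscr{C}^\infty$, are only needed for the derivative estimates, just as the paper needs uniform transversality of $E^u$ with $E^s_{hyp}$ to control $(\mathbf{1}+A)^{-1}$. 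One small imprecision: the two-sided eigenvalue bound $k_{\min}\leq\U_{v_0}\leq k_{\max}$ is a Ricatti comparison for the limit solution coming from $-\infty$ (Heintze--Im Hof), not literally an application of Rauch's theorem, but the fact is standard and the paper cites the relevant source.
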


\begin{proof}
We use Jacobi fields and notations introduced in section \ref{section:Jacobi-fields}. We also use the uniform transversality condition in the definition of $V_1$ --- see page \pageref{page:ast-stable}. In the neighbourhood $V_1$, since $E^u$ is transverse to the constant curvature stable direction, there exists a smooth matrix $A(\theta)$ such that
\begin{equation*}
E^u(\theta) = \{ X^+ + X^- | \; X^+ \in E^s_{hyp}, \; X^- \in E^u_{hyp}, \; X^+ = A(\theta) X^- \}.
\end{equation*}
When we transcribe this to Jacobi field coordinates, 
\begin{equation*}
E^u(\theta) = \{ ((\mathbf{1}+A)\xi^\perp, (\mathbf{1}-A)\xi^\perp) | \xi^\perp \in \perp\}
\end{equation*}
Remark here that $(\mathbf{1}+A)$ is invertible ; indeed, if it were not, there would be an unstable Jacobi field on $M$ that would vanish at some point. But a Jacobi field that vanishes at some point cannot go to $0$ as $t \to -\infty$, it has to grow.

Now, we consider a trajectory entering the horoball at $\theta_0$. We use the coordinates $(\theta_0, t)$ to refer to $\varphi^p_t(\theta_0)$. We use parallel transport to work with vectors in $T S^\ast M_{| V_1}$. We have 
\begin{equation*}
E^u(\theta_0, t) = \{ X^+ + X^- | \; X^+ \in E^s_{hyp}, \; X^- \in E^u_{hyp}, \; X^+ = e^{-2t} A(\theta_0) X^- \}
\end{equation*}
and in the horizontal-vertical coordinates
\begin{equation*}
E^u(\theta_0, t) = \{ ((\mathbf{1}+e^{-2t}A)\xi, (\mathbf{1}-e^{-2t}A)\xi) | \xi \perp d/dt\}
\end{equation*}
Actually, for $t\in [0, T]$, the jacobian 
\begin{equation*}
\Jac \: \varphi^p_t (\theta_0)
\end{equation*}
is the determinant of $J(0) \mapsto J(t)$ where $J(t)$ are the unstable Jacobi fields along the trajectory $\varphi^p_t(\theta_0)$. From the description with matrix $A$ above, we deduce that this is 
\begin{equation*}
e^{t.d}\det \frac{\mathbf{1}+ e^{-2t} A(\theta_0)}{\mathbf{1}+ A(\theta_0)}
\end{equation*}
As a consequence, 
\begin{equation*}
\frac{\tilde{J}_p(\varphi^p_t(\theta_0))}{\tilde{J}_p(\theta_0)} = \sqrt{ \det \frac{\mathbf{1}+ e^{-2t} A(\theta_0)}{\mathbf{1}+ A(\theta_0)}}.
\end{equation*}

Recall that $t \sim 2 \log |\theta|$ when the trajectory reaches the horosphere again, and that $\theta \to \theta_0$ is a symbol of order $-1$. We deduce that $\tilde{J}_p (\theta)$ is a symbol of order $0$.
\end{proof}

\begin{lemma}\label{lemma:uniform-bound-f}
For all $n \geq 0$, in the region of the horoball corresponding to trajectories entering in $V_1$, we can write
\begin{equation*}
f_{n,p} (\theta_0, t) = \tilde{f}_{n,p}(\theta_0, e^{-2t}).
\end{equation*}
We have for all $k \geq 0$,
\begin{equation*}
\| \tilde{f}_{n,p} \|_{C^k} \leq C_{n,k} ((\mathcal{T}(c_{p,q}) + \log b_j)^+)^n
\end{equation*}
with $C_{n,k}$ not depending on $p$ nor on $q$.
\end{lemma}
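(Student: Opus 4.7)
The plan is to proceed by induction on $n$, exploiting that inside $B(q, b_j)$ the curvature is constant, so that every geometric quantity can be expressed explicitly via the matrix $A(\theta_0)$ of Lemma \ref{lemma:uniform-bound-J} and the variable $u = e^{-2t}$. Setting $\mathcal{T} = (\mathcal{T}(c_{p,q}) + \log b_j)^+$, the base case $n = 0$ is trivial since $f_{0,p} \equiv 1$. For the inductive step, I would integrate \eqref{eq:def-f_n} along the trajectory starting at the entry point to obtain
\begin{equation*}
f_{n, p}(\varphi^p_t(\theta_0)) = f_{n, p}(\theta_0) + \frac{1}{2}\int_0^t (Q f_{n-1, p})(\varphi^p_s(\theta_0))\, ds,
\end{equation*}
where Lemma \ref{lemma:estimate_f} (applied at $\theta_0$, whose $G_p$-value is comparable to $\mathcal{T}$) controls the initial value by $C_{n, k} \mathcal{T}^n$ in $C^k(V_1)$, uniformly in $p, q$ thanks to the uniform compactness of $V_1 \subset \mathcal{I}$ from Lemma \ref{lemma:point-entry-compact}.

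The key claim is that when $f_{n-1,p}$ admits a representation $\tilde f_{n-1,p}(\theta_0, u)$ with $\|\tilde f_{n-1,p}\|_{C^{k+2}} \leq C_{n-1,k+2}\mathcal{T}^{n-1}$, the function $Q f_{n-1,p}$ expressed in $(\theta_0, t)$ coordinates has the form $u\,\tilde h_n(\theta_0, u)$ with $\tilde h_n$ smooth on $V_1 \times [0,1]$ and bounded in $C^k$ by $C_{n,k}\mathcal{T}^{n-1}$. To justify this I would use that the transverse Jacobi field along $\varphi^p_t(\theta_0)$ satisfies $J(t) = e^t(\mathbf{1}+uA(\theta_0))(\mathbf{1}+A(\theta_0))^{-1}$, which gives the transverse metric coefficients $g_{ij}(\theta_0, t) = e^{2t} M_{ij}(\theta_0, u)$ and hence $g^{ij} = u\, M^{ij}(\theta_0, u)$ with $M$ smooth and invertible. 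Combined with $\partial_t = -2u\partial_u$ and the formula
\begin{equation*}
\partial_t F_p(\varphi^p_t(\theta_0)) = -u\,\mathrm{tr}\bigl[A(\theta_0)(\mathbf{1}+uA(\theta_0))^{-1}\bigr],
\end{equation*}
obtained by differentiating the identity of Lemma \ref{lemma:uniform-bound-J}, every term of $\Delta \tilde f_{n-1,p}$, $\nabla F_p \cdot \nabla \tilde f_{n-1,p}$, $|\nabla F_p|^2$ and $\Delta F_p$ picks up at least one explicit factor of $u$.

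Granted this claim, the change of variable $u = e^{-2s}$ turns the integral into $\tfrac{1}{4}\int_{e^{-2t}}^1 \tilde h_n(\theta_0, v)\,dv$, which manifestly defines a smooth function
\begin{equation*}
\tilde f_{n,p}(\theta_0, u) := f_{n,p}(\theta_0) + \frac{1}{4}\int_u^1 \tilde h_n(\theta_0, v)\,dv
\end{equation*}
satisfying the announced $C^k$ estimate by adding the bounds on $f_{n,p}(\theta_0)$ and on $\tilde h_n$. The main obstacle is the careful coordinate bookkeeping that extracts the factor of $u$ uniformly from every term generated by $Q$; this in turn relies on the smooth dependence of $A(\theta_0)$ on $\theta_0$ and its uniform boundedness on $V_1$, coming ultimately from the compactness established in Lemma \ref{lemma:point-entry-compact}.
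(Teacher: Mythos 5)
Your proof proceeds by the same mechanism as the paper's: induction on $n$, the explicit Jacobi-field formula $e^t(\mathbf{1}+e^{-2t}A(\theta_0))(\mathbf{1}+A(\theta_0))^{-1}$ from Lemma~\ref{lemma:uniform-bound-J} to show the transverse metric picks up a factor $e^{-2t}$, the observation that every term of $Q_p f_{n-1,p}$ therefore acquires a factor of $u=e^{-2t}$, and the change of variable $v=e^{-2s}$ to pull $f_{n,p}$ back into the form $\tilde f_{n,p}(\theta_0,u)$. The only difference is expository --- you make explicit the identities $g^{ij}=u\,M^{ij}(\theta_0,u)$ and $\partial_t F_p=-u\,\mathrm{tr}[A(\mathbf{1}+uA)^{-1}]$, while the paper packages the same facts as the preliminary claim that $e^{2t}\nabla a_1\cdot\nabla a_2$ and $e^{2t}\Delta a_1$ are smooth in $(\theta_0,e^{-2t})$, verified via normal coordinates along the trajectory --- so the two proofs are substantively identical.
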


\begin{proof}
We start by considering two functions $a_1$ and $a_2$ of $\theta_0$ and $e^{-2t}$. Then
\begin{equation*}
e^{2t}\nabla a_1 . \nabla a_2  \text{ and } e^{2t} \Delta a_1
\end{equation*}
are still smooth functions of $\theta_0$ and $e^{-2t}$. Consider a trajectory $x(t)=(\theta_0, t)$. We can take normal coordinates along this geodesic $(t, x')$. We then only need to prove that $e^t \partial \theta_0 / \partial x'_{|x'=0}$ is a smooth function of $\theta_0$ and $t$. First, we observe that $\partial \theta_0 / \partial x'_{|x'=0, t=0}$ is only controlled by the angle between the geodesic and the horosphere $H(q,b_j)$, and this angle we have shown to be smooth. We only have to consider $\partial x'(t)/\partial x'(0)_{|x'=0}$, that is, the differential of the flow $\varphi^p_t$ transversally to $\nabla G_p$. We have computed it in the previous proof; it is $e^t (\mathbf{1} + e^{-2t} A(\theta_0))(\mathbf{1} + A(\theta_0))^{-1}$.

Now, we proceed by induction on $n$. First, $f_{0,p} = 1$ so it obviously satisfies the assumptions; it is also the case of $F_p = \log \tilde{J}_p$. Assume that the hypothesis has been verified for some $n \geq 0$. Then by the above and \eqref{eq:def-Q}, \ref{lemma:estimate_f}, $e^{2t} Q_p f_{n,p}$ is a smooth function of $\theta_0$ and $e^{-2t}$, with the same control as for $f_{n,p}$, and
\begin{equation*}
f_{n+1,p}(\theta_0, t) = f_{n+1,p}(\theta_0, 0) + \frac{1}{2}\int_0^t Q_p f_{n,p} \mathrm{d}s.
\end{equation*}
we can write the integral as
\begin{equation*}
\int_0^t e^{-2s} a(\theta_0, e^{-2s}) \mathrm{d}s = \left[ \int a(\theta_0, \rho) \mathrm{d} \rho \right]_{ e^{ -2t } }^1,
\end{equation*}
for some smooth function $a$. This ends the proof.
\end{proof}

Now, recall that $e^{-2t} \sim |\theta - \theta_0|^{-4}$, so this proves that $f_{n,p}(\theta)$ is a symbol of order $0$ as $\theta \to \infty$. 

Putting lemmas \ref{lemma:uniform_integrability}, \ref{lemma:uniform-bound-J}, \ref{lemma:uniform-bound-f} together, we deduce from equation \eqref{eq:integrating-by-parts} that for all $ N ,k>0$ and $\epsilon>0$, there is a constant $C_{ N ,k,\epsilon}>0$ such that, when $\Re s> d/2 + \epsilon$,
\begin{equation}\label{eq:estimate_remainder_non_stationary_2}
|(\mathrm{II})| \leq C_{ N ,k} e^{-\mathcal{T}(c_{p,q}) \Re s } (1+(\mathcal{T}(c_{p,q})+\log b_j)^+)^N   s^{-k}.
\end{equation}

	\subsection{Main result}

With the notations of lemma \ref{lemma:asymptotic_expansion_scattering}, for $c \in \pi^{ij}_1(M)$ with endpoints $p,q$ in $\widetilde{M}$, we define 
\begin{equation}\label{eq:def-a^n(g)}
a^n (c) := \exp\left\{\int_{c} V_0\right\} A_n(p,q).
\end{equation}
We also define
\begin{equation}\label{eq:def-T^0-T^hash}
\mathcal{T}^0_{ij}:= \inf \{ \mathcal{T}(c),\ c \in \pi^{ij}_1(M) \} \text{ and } \mathcal{T}^\#_{ij}= \min(-\log b_i b_j, \mathcal{T}^0_{ij}).
\end{equation}

Putting together lemmas \ref{lemma:first_estimate_phi}, \ref{lemma:asymptotic_expansion_scattering}, and equation \eqref{eq:estimate_remainder_non_stationary_2}, we get
\begin{theorem}\label{theorem:parametrix_phi}
For two cusps $Z_i$ and $Z_j$ not necessarily different, and for every $ N >0$, when $\Re s > \delta(\Gamma, V_0)$,
\begin{equation}\label{eq:parametrix_scattering_matrix}
\phi_{ij}(s) = \left(\frac{\pi}{s}\right)^{d/2} \sum_{[c] \in \pi_1^{ij}(M)} \sum_{n = 0}^{ N -1} \frac{1}{s^n} \frac{a^n(c)}{e^{s \mathcal{T}(c)}} + \frac{\mathcal{O}(1)}{s^N   e^{s \mathcal{T}^\#_{ij}} }.
\end{equation}
\end{theorem}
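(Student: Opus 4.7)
The strategy is to start from the representation given by Lemma \ref{lemma:first_estimate_phi}, expand each individual integral over $H(\gamma q, b_j)$ via stationary phase using Lemmas \ref{lemma:asymptotic_expansion_scattering} and the non-stationary estimate \eqref{eq:estimate_remainder_non_stationary_2}, and then control the summation over the infinitely many scattered geodesic classes with the modified Poincaré series of Section \ref{section:Poincare_series}. Concretely, for each $[\gamma]\in \Gamma_p\backslash\Gamma/\Gamma_q$ representing a class $c\in \pi^{ij}_1(M)$ with endpoints $(p,\gamma q)$, I split the integral using the cutoff $\chi$ from Section \ref{sec:main_asymptotics} into a stationary piece $(\mathrm{I})_c$ concentrated near the entry point and a non-stationary piece $(\mathrm{II})_c$.

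Applying Lemma \ref{lemma:asymptotic_expansion_scattering} gives for the stationary piece
\begin{equation*}
(\mathrm{I})_c = \left(\tfrac{\pi}{s}\right)^{d/2} e^{-s\mathcal{T}(c)} \left[\sum_{n=0}^{N-1} \frac{a^n(c)}{s^n} + \frac{\mathcal{O}(1)}{s^N}\bigl(1+(\mathcal{T}(c)+\log b_j)^+\bigr)^N \cdot e^{\int_c V_0}\right],
\end{equation*}
where I have used the definition $a^n(c)=e^{\int_c V_0} A_n(p,q)$ from \eqref{eq:def-a^n(g)}. The non-stationary piece is controlled by \eqref{eq:estimate_remainder_non_stationary_2}, which already shows $|(\mathrm{II})_c| = \mathcal{O}(s^{-\infty}) e^{-s\mathcal{T}(c)}(1+(\mathcal{T}(c)+\log b_j)^+)^N$, so it is absorbed into the same type of error bound as the stationary remainder.

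The main obstacle is showing that the summation over $c\in \pi^{ij}_1(M)$ of the main terms and the error terms are each uniformly convergent with the right size in $s$. For the main terms, one recognises that
\begin{equation*}
\sum_{c\in\pi^{ij}_1(M)} e^{\int_c V_0 - s\mathcal{T}(c)} = P^{ij}_{V_0}(s),
\end{equation*}
which converges for $\Re s > \delta(\Gamma, V_0) = \delta_g$ by Lemma \ref{lemma:convergence-modified-Poincare} (recall $V_\infty = 0$ since $F^{su}=-d$ in the cusp). Since $A_n(p,q) = \mathcal{O}(1+(\mathcal{T}(c)+\log b_j)^+)^n$ with polynomial growth, applying the trivial bound $x^n \le C_{n,\epsilon} e^{\epsilon x}$ and working at $\Re s - \epsilon$ shows that each sum $\sum_c a^n(c) e^{-s\mathcal{T}(c)}$ is an absolutely convergent Dirichlet series for $\Re s > \delta_g$. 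The same trick controls the sum of the error terms from both $(\mathrm{I})_c$ and $(\mathrm{II})_c$, giving a bound
\begin{equation*}
\Bigl|\sum_c \text{(error from }c\text{)}\Bigr| = \frac{\mathcal{O}(1)}{s^N}\, e^{-s\, \mathcal{T}^0_{ij}},
\end{equation*}
since the dominant exponential weight across the series is controlled by the smallest sojourn time $\mathcal{T}^0_{ij}$.

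Finally, combining this with the remainder $\mathcal{O}(s^{1/2-N} b_i^s b_j^s)=\mathcal{O}(s^{-N} e^{s\log(b_ib_j)})$ from Lemma \ref{lemma:first_estimate_phi} and taking the worst exponential decay between the two gives precisely the factor $e^{-s\mathcal{T}^\#_{ij}}$ with $\mathcal{T}^\#_{ij}=\min(-\log(b_ib_j),\mathcal{T}^0_{ij})$ as in \eqref{eq:def-T^0-T^hash}. The $s^{1/2}$ loss in the Sobolev trace estimate is harmless since it is absorbed in the $s^{-N}$ by taking $N$ one larger if needed. The case $i=j$ is handled identically, noting that the identity class in $\Gamma_p\backslash\Gamma/\Gamma_p$ has been subtracted in the definition of $\mathcal{SG}_{ii}$ and that the contribution $\delta_{ij} b_j^{2s-d}$ from Lemma \ref{lemma:first_estimate_phi} cancels against the term already accounted for in the proof of that lemma.
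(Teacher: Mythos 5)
Your proof follows exactly the route the paper intends: start from Lemma \ref{lemma:first_estimate_phi}, expand each term in the sum over classes $c$ by stationary phase (Lemma \ref{lemma:asymptotic_expansion_scattering} for the stationary piece, \eqref{eq:estimate_remainder_non_stationary_2} for the non-stationary one), and control the summation over $\pi^{ij}_1(M)$ with the modified Poincar\'e series of Section \ref{section:Poincare_series}. The paper's own proof of Theorem \ref{theorem:parametrix_phi} is the one-line remark that these three ingredients combine, so your write-up fills in essentially the same argument.

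One subtlety deserves a remark. As displayed in the paper, estimate \eqref{eq:estimate_remainder_non_stationary_2} carries no factor $e^{\int_c V_0}$, whereas the stationary remainder produced by Lemma \ref{lemma:asymptotic_expansion_scattering} does. Summing the non-stationary bounds as literally written gives $\sum_c e^{-\Re s\,\mathcal{T}(c)}$, which is the ordinary Poincar\'e series $P^{ij}_0$ and converges only for $\Re s>\delta_\Gamma$. Since $V_0=(F^{su}+d)/2$ can be negative where the curvature is more strongly pinched than $-1$, $\delta_\Gamma$ may strictly exceed $\delta(\Gamma,V_0)$, and then your claim of convergence on $\{\Re s>\delta(\Gamma,V_0)\}$ would not follow from the bound as stated. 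The missing factor is in fact supplied by Lemma \ref{lemma:uniform-bound-J}: the symbol norm of $\tilde{J}_p$ on the horosphere is $\mathcal{O}(\tilde{J}_p(0_\theta))=\mathcal{O}(e^{\int_c V_0})$ (using Lemma \ref{lemma:equivalence-Jacobian}), and carrying it into \eqref{eq:estimate_remainder_non_stationary_2} gives the non-stationary error the same Dirichlet weight as the stationary one. With that correction, your appeal to $P^{ij}_{V_0}$ is justified on the full half-plane and the rest of the argument (polynomial absorption, comparison of $\mathcal{T}^0_{ij}$ and $-\log b_ib_j$ yielding $\mathcal{T}^\#_{ij}$, and the replacement $N\to N+1$ to absorb the $s^{1/2}$ trace loss) is sound.
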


We proceed to give a parametrix for $\varphi$. When taking the determinant of the scattering matrix $\phi(s)$, we use the Leibniz formula
\begin{equation*}
\varphi(s) = \sum_{\sigma} \varepsilon(\sigma)\prod_{i=1}^\kappa \phi_{i, \sigma(i)}(s).
\end{equation*}
The sum is over the permutations $\sigma$ of $\ldbrack 1, \kappa \rdbrack$, and $\varepsilon(\sigma)$ is the signature of $\sigma$. The remainder will be bounded by terms of the form
\begin{equation*}
s^{\kappa /2 -  N } \exp\left\{-s \Big(\mathcal{T}_{1\sigma(1)}^\# + \dots + \mathcal{T}_{\kappa\sigma(\kappa)}^\#\Big) \right\}.
\end{equation*}
This one corresponds to the error of approximation for the product $\phi_{1\sigma(1)}(s)\dots \phi_{\kappa\sigma(\kappa)}(s)$ where $\sigma$ is a permutation of $\ldbrack 1 , \kappa \rdbrack$. Hence, we define
\begin{equation}
\mathcal{T}^\# = \min_{\sigma} \sum \mathcal{T}_{i\sigma(i)}^\#.
\end{equation}
It corresponds to the slowest decreasing  remainder term as $\Re s \to +\infty$. Recall the definition in \eqref{eq:def-SC}: the scattering cycles ($\mathcal{SC}$) are numbers of the form $T_1 + \dots + T_\kappa$, where $T_i \in \mathcal{ST}_{i \sigma(i)}$. We define $\mathcal{T}^0$ to be the smallest scattering cycle. It corresponds to the slowest decreasing term in the parametrix. By definition, $\mathcal{T}^\# \leq \mathcal{T}^0$.

\begin{remark} There are two cases. When $\mathcal{T}^\# < \mathcal{T}^0$, the error is bigger than the main term in the parametrix, for $\Re s$ too big with respect to $\Im s$. That occurs when the incoming plane waves from the cusps encounter variable curvature before they have travelled the shortest scattered geodesics.

When $\mathcal{T}^\# = \mathcal{T}^0$, the error term is always smaller than the main term in the parametrix. This means that the variations of the curvature happen not too close to the cusps. It is in particular the case when the curvature is constant.
\end{remark}

In any case, let $\lambda_\#= \exp \mathcal{T}^\#$. Also let $\lambda_0 < \lambda_1 < \dots < \lambda_k < \dots $ be the ordered elements of $\{ \exp \mathcal{T}, \ \mathcal{T} \in \mathcal{SC} \}$. We can now state the conclusion of this section:

\begin{theorem}\label{theorem:parametrix_varphi}
There are real coefficients $\{a^n_k\}_{k, n \geq 0}$ such that if
\begin{equation*}
L_n := \sum_{k \geq 0} \frac{a^n_k}{\lambda_k^s},
\end{equation*}
all the $L_n$'s converge in the half plane $\{\Re s > \delta(\Gamma, V_0)\}$. In that half plane, for all $ N \geq 0$,
\begin{equation*}
\varphi(s) = s^{-\kappa d/2}\left[ \sum_{n= 0}^N   s^{-n} L_n(s) + \frac{\mathcal{O}(1)}{s^{ N +1} \lambda_\#^s}\right].
\end{equation*}
\end{theorem}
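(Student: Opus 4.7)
The plan is to substitute the expansion of Theorem~\ref{theorem:parametrix_phi} (applied with $N$ replaced by $N+1$) into the Leibniz formula
\begin{equation*}
\varphi(s) = \sum_{\sigma} \varepsilon(\sigma) \prod_{i=1}^\kappa \phi_{i,\sigma(i)}(s),
\end{equation*}
expand the $\kappa$-fold product, and collect the resulting terms along two grading parameters: the total power of $1/s$, and the value $\sum_i \mathcal{T}(c_i)=\log \lambda_k$ on a geodesic cycle $(c_1,\dots,c_\kappa)$.

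The product expands into $2^\kappa$ pieces, each corresponding to a choice in every factor of either its main Dirichlet-series part or its error. The \emph{all-main} piece distributes as
\begin{equation*}
\left(\pi/s\right)^{\kappa d/2}\sum_{n=0}^{\kappa N} s^{-n} \sum_{\substack{n_1+\cdots+n_\kappa=n\\ 0\leq n_i\leq N}} \sum_{c_i\in\pi^{i\sigma(i)}_1(M)} \prod_i \frac{a^{n_i}(c_i)}{e^{s\mathcal{T}(c_i)}}.
\end{equation*}
For $n\leq N$ the constraint $n_i\leq N$ is automatic, so I would drop it and define $a_k^n$ as $\pi^{\kappa d/2}$ times the sum of $\varepsilon(\sigma)\prod_i a^{n_i}(c_i)$ over all $\sigma$, all compositions $n_1+\cdots+n_\kappa=n$, and all geodesic cycles with $\sum_i \mathcal{T}(c_i)=\log\lambda_k$. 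Each $a_k^n$ is a finite real sum (only finitely many scattered geodesics have sojourn below any given bound, as noted after \eqref{eq:def-T}), and $L_n(s)=\sum_k a_k^n\lambda_k^{-s}$ is, by construction, a Dirichlet series in the variable $s$.

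Next, I would verify absolute convergence of each $L_n$ in $\{\Re s>\delta(\Gamma,V_0)\}$. By Lemma~\ref{lemma:asymptotic_expansion_scattering}, $|a^n(c)|$ is bounded by a polynomial in $\mathcal{T}(c)$ times $\exp\int_c V_0$; the polynomial factor is absorbed by any $\epsilon>0$ shift of $\Re s$, after which $\sum_c \exp(\int_c V_0-(\Re s-\epsilon)\mathcal{T}(c))$ is (up to boundary adjustments) a modified Poincaré series with admissible potential $V_0$, whose convergence for $\Re s-\epsilon>\delta(\Gamma,V_0)$ is Lemma~\ref{lemma:convergence-modified-Poincare}. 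The product of $\kappa$ such absolutely convergent series is absolutely convergent, which legitimizes the regrouping by $\sum_i \mathcal{T}(c_i)$ that defines the $a_k^n$.

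Finally I would bound the remainder, which splits into three families: (a) excess \emph{all-main} terms of total order $N+1\leq n\leq \kappa N$, contributing $\mathcal{O}(s^{-\kappa d/2-N-1}\lambda_0^{-\Re s})\subseteq \mathcal{O}(s^{-\kappa d/2-N-1}\lambda_\#^{-\Re s})$ since $\lambda_\#\leq\lambda_0$; (b) mixed terms where the $i$-th factor is replaced by its $\mathcal{O}(s^{-d/2-N-1}e^{-s\mathcal{T}^\#_{i,\sigma(i)}})$ error while every other factor contributes its main part, bounded by $\mathcal{O}(s^{-d/2}e^{-s\mathcal{T}^0_{j,\sigma(j)}})$, with total decay exponent
\begin{equation*}
\mathcal{T}^\#_{i,\sigma(i)}+\sum_{j\neq i}\mathcal{T}^0_{j,\sigma(j)}\;\geq\;\sum_{j}\mathcal{T}^\#_{j,\sigma(j)}\;\geq\;\mathcal{T}^\#
\end{equation*}
by the inequalities $\mathcal{T}^\#_{ij}\leq \mathcal{T}^0_{ij}$ and the definition $\mathcal{T}^\#=\min_\sigma\sum_i \mathcal{T}^\#_{i,\sigma(i)}$; and (c) products of two or more error factors, which decay strictly faster. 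The main delicate step is the bookkeeping in (b): the geometric inequality just displayed is precisely what forces the uniform remainder rate $\lambda_\#^{-s}$, and without it the error exponent would depend on $\sigma$ and on which factor was replaced by its error. Summing over $\sigma$ yields the claimed $\mathcal{O}(s^{-\kappa d/2-N-1}\lambda_\#^{-s})$ remainder.
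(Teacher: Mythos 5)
Your plan — Leibniz expansion, substitute Theorem~\ref{theorem:parametrix_phi} into each factor, regroup by total $s$-power and total sojourn time, bound the cross-error terms using $\mathcal{T}^\#_{ij}\leq\mathcal{T}^0_{ij}$ and $\mathcal{T}^\#=\min_\sigma\sum\mathcal{T}^\#_{i\sigma(i)}$ — is exactly the paper's argument (the paper only sketches it in the two paragraphs preceding the theorem), and your convergence/regrouping step via Lemma~\ref{lemma:convergence-modified-Poincare} and the bound $|a^n(c)|\lesssim (\mathcal{T}(c))^n\exp\int_c V_0$ from Lemma~\ref{lemma:asymptotic_expansion_scattering} is correct.

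There is, however, a slip in the power-of-$s$ bookkeeping in item (b). You take the error of the $i$-th factor to be $\mathcal{O}(s^{-d/2-N-1}e^{-s\mathcal{T}^\#_{i\sigma(i)}})$, but Theorem~\ref{theorem:parametrix_phi} (with $N$ replaced by $N+1$) gives only $\mathcal{O}(s^{-N-1}e^{-s\mathcal{T}^\#_{i\sigma(i)}})$: the prefactor $(\pi/s)^{d/2}$ sits only on the Dirichlet-series main part, not on the remainder. Indeed, the Sobolev-trace contribution from Lemma~\ref{lemma:first_estimate_phi} is $\mathcal{O}(s^{1/2-N'}(b_ib_j)^s)$, with no $s^{-d/2}$. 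Consequently one error factor times $\kappa-1$ main factors is only $\mathcal{O}\bigl(s^{-(\kappa-1)d/2-N-1}\lambda_\#^{-s}\bigr)$, which is $s^{d/2}$ short of the claimed $\mathcal{O}\bigl(s^{-\kappa d/2-N-1}\lambda_\#^{-s}\bigr)$ — and in fact $s^{(d+1)/2}$ short if one tracks the $s^{1/2}$ from the trace theorem rather than the looser statement of Theorem~\ref{theorem:parametrix_phi}. The fix is simple and keeps your structure intact: apply Theorem~\ref{theorem:parametrix_phi} with $N$ replaced by $N'=N+1+\lceil(d+1)/2\rceil$ (or any $N'$ large enough that $s^{1/2-N'-(\kappa-1)d/2}\leq s^{-\kappa d/2-N-1}$), and absorb the excess all-main terms of total order $N+1,\dots,\kappa N'$ into the remainder exactly as you do in item (a), using $\lambda_0\geq\lambda_\#$. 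Since the $\mathcal{O}(1)$ in the statement is allowed to depend on $N$, this causes no difficulty.
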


\section{Dependence of the parametrix on the metric}\label{section:continuity-parametrix-metric}

This section is devoted to studying the regularity of the coefficients $a^n(c)$ with respect to the metric. We prove that they are continuous in the appropriate spaces in sections \ref{section:marked-sojourn-spectrum} and \ref{section:regularity_higher_coeff}. Then, we prove an openness property in $\mathscr{C}^\infty$ topology on metrics. While essential to the proof of theorem \ref{theorem:main}, this part is quite technical, and the impatient reader may skip directly to section \ref{section:Application}.

\subsection{The marked \emph{Sojourn Spectrum}}\label{section:marked-sojourn-spectrum}

As announced in section \ref{section:parabolic-points-scattered-geodesics}, we emphasize the dependence of objects on the metric from now on. In particular, when we write $[\overline{c}_g] \in \pi^{ij}_1(M)$, we mean that we take some class in $\pi^{ij}_1(M)$, and consider $\overline{c}_g$, the unique scattered geodesic for $g$ in that class. 

We denote by $T_g$ the application $T_g : [\overline{c}_g] \in \pi^{ij}_1(M) \mapsto \mathcal{T}(\overline{c}_g)$. We also write $a^n(g, [c])$ instead of just $a^n(c)$.

In what follows, we are interested in the regularity and openness properties of $\varphi$. It is obtained as the determinant of $\phi(s)$. Since the determinant is a polynomial expression, it is certainly smooth and open with respect to $\phi$. As a consequence, it suffices to study the regularity of each $\phi_{ij}$ independently, and the openess properties of $\phi(s)$ instead of $\varphi$. 

The following lemma is classical:
\begin{lemma}\label{lemma:regularity_geodesic_flow}
Let $(g_{\epsilon})_{\epsilon \in \R}$ be a family of $\mathscr{C}^\infty$ cusp metrics on $M$, so that their curvature varies in a compact set independent from $\epsilon$. Suppose additionally that $g_\epsilon$ is $C^{2+k}$ on $\R \times M$ for $k\geq 0$. Then we say that $g_\epsilon$ is a $C^{2+k}$ family of metrics.

In that case, the geodesic flow $\varphi_t^{g_\epsilon}$ is $C^{1+k}$ on $\R \times M$ for $k\geq 0$.
\end{lemma}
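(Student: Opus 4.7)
The natural strategy is to regard $\epsilon$ as an additional spatial variable and apply the classical theorem on smooth dependence of ODE solutions on initial data and parameters, applied to the geodesic Hamiltonian flow.

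In local coordinates on $T^\ast M$, the geodesic flow $\varphi^{g_\epsilon}_t$ is the flow of the Hamiltonian vector field associated with $H_\epsilon(x,p) = \tfrac12 g_\epsilon^{ij}(x) p_i p_j$, namely
\[
X_{g_\epsilon}(x,p) = \bigl(g_\epsilon^{ij}(x)\,p_j,\ -\tfrac12 (\partial_{x^k} g_\epsilon^{ij})(x)\,p_i p_j\bigr).
\]
Its coefficients involve $g_\epsilon^{ij}$ and its first $x$-derivatives only, so the assumption that $g_\epsilon$ is $C^{2+k}$ jointly in $(\epsilon, x)$ means that $X_{g_\epsilon}(x,p)$ is jointly $C^{1+k}$ in $(\epsilon, x, p)$.

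Form the autonomous augmented vector field $\tilde X(\epsilon, x, p) := (0, X_{g_\epsilon}(x,p))$ on $\R \times T^\ast M$. For $k \geq 0$ it is $C^{1+k}$, hence in particular locally Lipschitz, so Cauchy-Lipschitz applies and integral curves are unique. The standard smooth-dependence theorem (see, e.g., Hartman, \emph{Ordinary Differential Equations}, Ch.~V) then asserts that the flow of a $C^{1+k}$ vector field is $C^{1+k}$ in all of its arguments; applied here this says exactly that $(t, \epsilon, \xi) \mapsto \varphi^{g_\epsilon}_t(\xi)$ is $C^{1+k}$ jointly. The autonomous structure actually supplies one extra derivative in $t$ alone, but we do not need it.

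The only point requiring care beyond this routine ODE step is that $M$ is non-compact: in principle the flow could escape to infinity. However, each cusp carries the rigid hyperbolic metric $(dy^2 + d\theta^2)/y^2$ from \eqref{eq:def-Z}, so the flow is explicitly integrable and smooth there, and the assumption that the curvatures of $g_\epsilon$ lie in a fixed compact set gives uniform control on orbits: starting from any compact set of $T^\ast M$, the flow remains in a compact set over any fixed finite time interval, uniformly in $\epsilon$. This upgrades the local regularity to the claimed global statement on $\R \times M$. I expect this compactness/uniformity bookkeeping to be the only genuine obstacle, and it is mild — which is why the lemma is labelled classical.
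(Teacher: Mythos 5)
Your proposal is correct and takes essentially the same route as the paper: reduce to the classical theorem that the flow of a $C^{1+k}$ vector field (here the geodesic Hamiltonian vector field, whose coefficients involve one $x$-derivative of $g_\epsilon$) is $C^{1+k}$ jointly in time, parameter, and initial data. Your additional remark about non-escape from compact sets in finite time — ensured by completeness and the rigid hyperbolic form of the metrics in the cusps — addresses a point the paper leaves implicit, but it does not change the underlying argument.
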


This is the direct consequence of
\begin{lemma}
Let $f$ be a $C^k$ function on $\R \times U \subset \R^m$ where $k\geq 1$ and $U$ is an open set. Then the flow associated to 
\begin{equation*}
\dot{x} = f(t,x).
\end{equation*}
is $C^k$.
\end{lemma}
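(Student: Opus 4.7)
The plan is to proceed by induction on $k$, treating $k=1$ by hand and then reducing the higher regularity to the inductive hypothesis by augmenting the system with the variational equation.

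\textbf{Step 1: $C^1$ case and setup.} Since $f$ is $C^1$, it is locally Lipschitz in $x$ uniformly in $t$ on compacta, so the Picard--Lindelöf theorem gives local existence and uniqueness of the flow $\phi(t,s,x_0)$ solving $\partial_t \phi = f(t,\phi)$ with $\phi(s,s,x_0)=x_0$. A standard Gronwall argument on $|\phi(t,s,x_0)-\phi(t,s,x_0')|$, using the Lipschitz bound on $f$, shows continuity in $(t,s,x_0)$ and that the domain of existence is open.

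\textbf{Step 2: differentiability in $x_0$.} For fixed $s$, I would show $\partial_{x_0}\phi$ exists and satisfies the variational equation
\begin{equation*}
\partial_t Y(t) = D_x f(t,\phi(t,s,x_0)) Y(t), \qquad Y(s) = \mathrm{Id}.
\end{equation*}
The proof consists in forming the difference quotient $(\phi(t,s,x_0+h)-\phi(t,s,x_0))/|h|$, writing
\begin{equation*}
f(t,\phi(t,s,x_0+h)) - f(t,\phi(t,s,x_0)) = D_x f(t,\phi(t,s,x_0))(\phi(t,s,x_0+h)-\phi(t,s,x_0)) + o(|h|),
\end{equation*}
and applying Gronwall's inequality to show the quotient converges to the solution $Y$ of the linear equation above. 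Continuity of $Y$ in $(t,s,x_0)$ follows from continuous dependence for the linear ODE. Combined with $\partial_t\phi = f(t,\phi)$ and $\partial_s\phi = -Y(t)f(s,x_0)$ (obtained from the group relation $\phi(t,s',\phi(s',s,x_0))=\phi(t,s,x_0)$ by differentiating in $s'$), this yields $\phi \in C^1$.

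\textbf{Step 3: induction.} Suppose the assertion holds up to order $k-1$ with $k\geq 2$, and assume $f\in C^k$. Consider the augmented system for $Z=(\phi,Y)\in \R^m \times \R^{m\times m}$:
\begin{equation*}
\partial_t \phi = f(t,\phi), \qquad \partial_t Y = D_x f(t,\phi)\, Y,
\end{equation*}
with initial data $\phi(s)=x_0$, $Y(s)=\mathrm{Id}$. The right-hand side $F(t,Z) := (f(t,\phi), D_x f(t,\phi)Y)$ is $C^{k-1}$ in $(t,Z)$, so by the inductive hypothesis applied to this ODE, the flow $(\phi, Y)$ is $C^{k-1}$ in $(t,s,x_0)$. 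Since $Y = \partial_{x_0}\phi$ by Step 2, this says $\partial_{x_0}\phi$ is $C^{k-1}$; likewise $\partial_t\phi = f(t,\phi)$ and $\partial_s\phi = -Y f(s,x_0)$ are $C^{k-1}$, whence $\phi\in C^k$.

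The main subtlety is the passage from Step 1 to Step 2, where one has to justify the differentiation of the difference quotient before knowing the limit exists; Gronwall's inequality handles this cleanly, and everything else is routine.
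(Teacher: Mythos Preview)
Your argument is correct and is precisely the standard textbook proof by induction on $k$ via the augmented variational system. The paper does not actually prove this lemma; it simply remarks that ``the proof of this can be found in any introduction to dynamical systems,'' and your write-up is exactly the argument such a reference would supply.
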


The proof of this can be found in any introduction to dynamical systems. Now, we can prove that both the marked set of scattered geodesics and the marked Sojourn Spectrum are continuous along a perturbation of the metric that is at least $C^1$ in the $C^2$ topology on metrics.
\begin{lemma}
Let $g_\epsilon$ be a $C^{2+k}$ ($k\geq 0$) family of cusp metrics on $M$. Let $\overline{c}$ be a scattered geodesic for $g= g_0$. Then there is a $C^{1+k}$ family of curves $\overline{c}_\epsilon$ on $M$ such that $\overline{c}_\epsilon$ is a scattered geodesic for $g_\epsilon$. In particular, this proves that $g \mapsto \overline{c}_g$ (given a class in $\pi^{ij}_1(M)$) $g \mapsto T_g$ are $C^{1+k}$ in $C^{2+k}$ topology on $g$, when $k\geq 0$.
\end{lemma}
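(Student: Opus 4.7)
The plan is to recast existence and regularity of $\overline{c}_\epsilon$ as a shooting problem in the universal cover, solve it by the implicit function theorem, and reduce the non-degeneracy assumption to the transversality of the weak-stable and weak-unstable foliations.

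First I would lift $\overline{c}=\overline{c}_0$ to a $g_0$-geodesic $\tilde{c}_0 \subset \widetilde{M}$ with endpoints $p\in\Lambda^i_{par}$ and $q\in\Lambda^j_{par}$, where the pair $(p,q)$ modulo the $\Gamma_p\times\Gamma_q$ action encodes the homotopy class $[c] \in \pi^{ij}_1(M)$. Because every $g_\epsilon$ is a cusp metric, each $g_\epsilon$ coincides with the standard hyperbolic metric on the horoballs $B(p,a_i)$ and $B(q,a_j)$; in particular the parabolic points $p,q$, the horospheres $H(p,a_i)$ and $H(q,a_j)$ (each isometric to $\R^d$), and the Busemann functions $G_p$, $G_q$ are independent of $\epsilon$. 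Any $g_\epsilon$-geodesic asymptotic to $p$ must enter $H(p,a_i)$ perpendicularly at a single point, which we parametrize by $\theta \in H(p,a_i)\simeq\R^d$, and analogously for $q$.

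Next I would set up the shooting map. For $\theta$ close to $\theta_0 := \tilde{c}_0 \cap H(p,a_i)$, let $\gamma_\epsilon(\theta,\cdot)$ be the $g_\epsilon$-geodesic with initial data $(\theta,-\nabla G_p(\theta))$. By Lemma~\ref{lemma:regularity_geodesic_flow} it is $C^{1+k}$ in $(\epsilon,\theta,t)$, and for $(\epsilon,\theta)$ near $(0,\theta_0)$ the trajectory transversally hits the horosphere $H(q,a_j)$ at the $\Gamma$-translate prescribed by $[c]$, at some time $T_\epsilon(\theta)$ that is also $C^{1+k}$. Let $\Phi(\epsilon,\theta)\in\R^d$ denote the component of $\dot\gamma_\epsilon(\theta,T_\epsilon(\theta))$ tangent to that exit horosphere. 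Then $\Phi$ is $C^{1+k}$ and $\Phi(0,\theta_0)=0$, since $\tilde{c}_0$ exits vertically.

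The main analytical point, which I expect to be the principal obstacle, is the invertibility of $D_\theta\Phi(0,\theta_0)$. Varying $\theta$ at $\epsilon=0$ moves $\tilde{c}_0$ within the incoming Lagrangian $W^{u0}(p)$, while the vanishing of $\Phi$ at the exit expresses that the resulting geodesic is also vertical near $q$, i.e.\ lies in the analogous outgoing Lagrangian of geodesics asymptotic to $q$ in the future. Since $\theta$ parametrizes $W^{u0}(p)$ transversally to the flow, the invertibility of $D_\theta\Phi(0,\theta_0)$ amounts to the statement that this outgoing Lagrangian meets $W^{u0}(p)$ only along the single flow-orbit of $\tilde{c}_0$, and transversally. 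This follows from the uniqueness of bi-infinite geodesics between two points of $\partial_\infty\widetilde{M}$ in a Hadamard manifold of negative curvature, together with the Anosov transversality of $E^u$ and $E^s$ along $\tilde{c}_0'$, itself an avatar of the Wronskian identity~\eqref{eq:Wronskian} and the absence of conjugate points.

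The implicit function theorem then produces a unique $C^{1+k}$ branch $\theta(\epsilon)$ solving $\Phi(\epsilon,\theta(\epsilon))=0$. The associated $g_\epsilon$-geodesic is vertical at both ends, hence scattered from $Z_i$ to $Z_j$, and by continuity of the flow in $\epsilon$ its projection to $M$ stays in the class $[c]$ for small $\epsilon$, so it coincides with $\overline{c}_\epsilon$. This yields the claimed $C^{1+k}$ dependence of $g\mapsto\overline{c}_g$, and $T_g([c]) = T_\epsilon(\theta(\epsilon)) - \log a_i - \log a_j$ is then $C^{1+k}$ as well, proving the ``in particular'' assertion.
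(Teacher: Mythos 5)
Your proposal is correct and, up to framing, it is the paper's argument: a shooting map near the cusps, the implicit function theorem, with the substance of the proof being the non-degeneracy of the differential. The paper poses the shooting directly in $M$, from the projected horosphere $H_i$ at a height $y_0$ chosen above the support of the curvature variation; you lift to the universal cover and work with Busemann functions. These are equivalent (two small slips in yours: the initial velocity of a geodesic coming out of the cusp toward the compact part is $+\nabla G_p$, not $-\nabla G_p$; and $G_p$ does depend on $\epsilon$ outside the horoball, although only its $\epsilon$-independent restriction to $B(p,a_i)$ is actually used). The one place where you and the paper genuinely argue differently is the invertibility of $\partial_\theta\Phi(0,\theta_0)$: the paper cites Lemma~\ref{lemma:non-degeneracy}, which gives a quantitative Topogonov lower bound on $d^2_\theta G_p$, whereas you argue qualitatively via transversality of the stable and unstable Lagrangians and absence of conjugate points. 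These are the same criterion, since $d^2_\theta G_p(0_\theta) = \U - \Ss$ (a computation carried out inside the proof of Lemma~\ref{lemma:asymptotic_expansion_scattering}), and invertibility of $\U-\Ss$ is exactly transversality of $E^u$ and $E^s$; your qualitative version is enough for the implicit function theorem, while the paper simply re-uses the quantitative lemma it already needed for the stationary-phase estimates.
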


\begin{proof}
Let us assume that $\overline{c}$ enters $M$ in $Z_i$ and escapes in $Z_j$. We can assume that the variations of the curvature of $g_\epsilon$ always take place below $y= y_0$. Let $x_0$ (resp. $x_1$) be the point where $\overline{c}$ intersects the projected horosphere $H_i$ (resp. $H_j$) at height $y_0$ in $Z_i$ (resp. $Z_j$), entering (resp. leaving) the compact part. For $x\in H_i$ and $\epsilon$ close to $0$, we can consider the following curve: $c_{x,\epsilon}$ is the geodesic for $g_\epsilon$, that passes through $x$, and is directed by $-\partial_y$ at $x$. We have $\overline{c} = c_{x_0, 0}$. For $(x,\epsilon)$ close enough to $(x_0, 0)$, $c_{x,\epsilon}$ intersects the projected horosphere $H_j$, for a time close to $\mathcal{T}(\gamma)+2\log y_0$. We let $x'(x,\epsilon)$ be that point of intersection, and $v(x,\epsilon)$ the vector $c_{x,\epsilon}'$ at $x'(x,\epsilon)$.

Now, by the lemma above, $v(x,\epsilon)$ is $C^{1+k}$, and by the Local Inversion Theorem, there is a unique $\epsilon\mapsto x(\epsilon)$, $C^{1+k}$, such that $v(x(\epsilon),\epsilon)$ is the vertical for all $\epsilon$ sufficiently close to $0$, as soon as $\partial_{x}v(0,0)$ is invertible. But the fact that it is invertible is a direct consequence of the non-degeneracy of the phase function shown in lemma \ref{lemma:non-degeneracy}.
\end{proof}

Let
\begin{equation*}
L^{ij}_n(s) = \sum_{[\overline{c}] \in \pi_1^{ij}(M)} \frac{a^n(g,[\overline{c}])}{e^{s T_g([\overline{c}])}}
\end{equation*}

\begin{lemma}\label{lemma:regularity_first_asymptotics}
Let $g_\epsilon$ be a $C^{2+k}$ family of metrics, $k\geq 0$. Then, as a formal series, $L^{ij}_0$ depends on $\epsilon$ in a $C^k$ fashion. In particular, the series $L_0$ giving the first asymptotics for $\varphi$ at high frequencies, also depends in a $C^k$ fashion on $\epsilon$.
\end{lemma}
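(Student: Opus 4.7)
The approach is to obtain a closed-form expression for $a^0(g, [\overline{c}_g])$ in terms of Jacobi field data along the scattered geodesic $\overline{c}_g$, and then invoke smooth dependence of ODE solutions on parameters. From the proof of Lemma \ref{lemma:asymptotic_expansion_scattering} (equation \eqref{eq:formula-C_0}, together with $A_0(p,q) = 1$),
\[
a^0(g, [\overline{c}_g]) \;=\; 2^{d/2}\,\frac{\tilde{J}_p(0_\theta)}{\sqrt{\det d^2_\theta G_p(0_\theta)}},
\]
where $p,q$ are the endpoints of a lift of $\overline{c}_g$ and $0_\theta$ its first intersection with $H(q, b_j)$. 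Using $\nabla^2 G_p = \U_{(\cdot, \nabla G_p)}$ and the fact that $G_q$ is constant on $H(q,b_j)$, one has $d^2_\theta G_p(0_\theta) = \U_v - \Ss_v$ at $v = (0_\theta, \nabla G_p(0_\theta))$; combined with $\tilde{J}_p(0_\theta)^2 = e^{dG_p(0_\theta)}/\det \J_u(0_\theta)$ from \eqref{eq:formula-J}, this exhibits $a^0(g,[\overline{c}_g])$ as an algebraic function of the Jacobi matrices $\U_v, \Ss_v, \J_u(0_\theta)$ computed along $\overline{c}_g$.

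The crucial observation is that these matrices are governed by ODEs \emph{along a fixed orbit}: $\U$ and $\Ss$ solve Riccati equations $M' + M^2 + K = 0$, and $\J_u$ solves $\J_u'' + K \J_u = 0$, where $K$ is the transverse sectional curvature operator. Inside the hyperbolic cusps these equations admit explicit solutions, and \eqref{eq:formula-U} yields $\U_{\varphi_t v} = \mathbf{1} + \mathcal{O}(e^{-t})$ as $t \to +\infty$ (with analogous asymptotics for $\Ss$ and $\J_u$), which furnish the required initial data at the boundaries of $M_0$. By Lemma \ref{lemma:regularity_geodesic_flow}, $\overline{c}_{g_\epsilon}$ is $C^{1+k}$ in $\epsilon$; since $g_\epsilon$ is $C^{2+k}$, the curvature tensor --- and hence $K$, read in a $C^{1+k}$ family of parallel frames along $\overline{c}_{g_\epsilon}$ --- is $C^k$ in $\epsilon$. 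The standard theorem on $C^k$-dependence of ODE flows on parameters, applied over the bounded sojourn time in $M_0$, then gives $C^k$ dependence of $\U_v$, $\Ss_v$ and $\J_u(0_\theta)$, hence of $a^0(g,[\overline{c}_g])$, on $\epsilon$. Combined with the $C^{1+k}$ dependence of $T_g([\overline{c}_g])$ already established, this proves the term-wise $C^k$ claim.

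For the ``in particular'' assertion, fix $\eta > 0$ and restrict to $\Omega_\eta := \{\Re s > \delta_{g_0} + 2\eta\}$. Since $V_0^{g_\epsilon}$ varies continuously in sup-norm and the pressure $\delta(\Gamma, V_0^{g_\epsilon})$ depends continuously on the potential, $\Omega_\eta$ lies in the common half-plane of convergence of all $L^{ij}_{0, g_\epsilon}$ for $\epsilon$ in a neighborhood of $0$. The uniform bound on the modified Poincar\'e series from Lemma \ref{lemma:convergence-modified-Poincare} provides a summable majorant, independent of $\epsilon$ near $0$, that dominates each of the first $k$ $\epsilon$-derivatives of the generic term, and dominated convergence / differentiation under the sum yields $C^k$ dependence of $L^{ij}_0$ on $\Omega_\eta$. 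The main obstacle is the apparent mismatch between the merely H\"older transverse regularity of $E^u, E^s$ (so that $V_0$ itself is only H\"older) and the desired $C^k$ regularity of $a^0$; the resolution is that along a single orbit, the Jacobi data obey a smooth ODE inheriting the full regularity of the metric.
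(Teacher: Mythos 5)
Your proposal is correct and takes essentially the same route as the paper's proof: both recognize that $V_0$ is only H\"older and therefore work instead with the representation $a^0 = 2^{d/2}\,\tilde J_p(0_\theta)/(\det d^2_\theta G_p(0_\theta))^{1/2}$ from \eqref{eq:formula-C_0}, express the two ingredients through the linearized flow $d\varphi^p_t$ along $\overline{c}$, and then invoke $C^k$ dependence on $\epsilon$ coming from the $C^{1+k}$ regularity of the geodesic flow and of $\epsilon\mapsto\overline{c}_{g_\epsilon}$. You unpack this through Jacobi-field/Riccati ODEs with explicit cusp asymptotics as initial data, where the paper phrases it more tersely in terms of the Jacobian of $\varphi^p_t$; these are the same argument in equivalent language. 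Your dominated-convergence discussion of the ``in particular'' clause is extra --- the paper's intended reading is simply that, once each $L_0^{ij}$ has $C^k$ coefficients as a formal series, the same holds for $L_0$ since the determinant is polynomial in the $\phi_{ij}$ --- but what you wrote is not incorrect.
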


\begin{proof}
We only have to prove that $a^0(g_\epsilon, [\overline{c}])$ depends on $\epsilon$ in a $C^k$ fashion. Since $V_0$ is only a H\"older function, it is easier to study the regularity of $a^0$ with the original expression \eqref{eq:formula-C_0}. That is, we have to study $d^2_\theta G_p(0_\theta)$ and $\tilde{J}_p(0_\theta)$. 

First, consider $\tilde{J}_p$. It is a function of the jacobian of the flow $\varphi^p_t$ along $\overline{c}$. Since $\varphi^p_t$ is just some restriction of $\varphi_t$, $\varphi^p_t$ is $C^{1+k}$ on $\R \times M$. We also have that $\overline{c}$ is $C^{1+k}$, so that $\tilde{J}_p$ is $C^k$ on $\epsilon$.

For $d^2_\theta G_p(0_\theta)$, consider that it is obtained as the first variation of $\nabla G_p$ along the horocycle $H(q,b_j)$. But this means that $d^2_\theta G_p(0_\theta)$ is again obtained directly in terms of $d\varphi^p_t$ along $\overline{c}$ and this ends the proof.
\end{proof}

Here already, we see that the first order behaviour of the scattering determinant at high frequency ($\Re s$ bounded and $\Im s \to \pm \infty$) depends continuously on $g$ in $C^2$ topology. The next section is devoted to studying this regularity for other terms.

\subsection{Higher order coefficients of the parametrix}\label{section:regularity_higher_coeff}

Now, we are interested in the regularity of $a^n(g,[c])$ for $n\geq 1$. 
\begin{lemma}
Let $g_\epsilon$ be a $C^{2+k}$ family of cusp metrics on $M$. Then the coefficients $a^n(g,[c])$ depend in a $C^{k-2n}$ fashion on $\epsilon$, as soon as $k\geq 2n$.
\end{lemma}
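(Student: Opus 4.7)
The formula $a^n(g,[c]) = \exp(\int_{\overline{c}_g} V_0)\,A_n(p_g,q_g)$ from \eqref{eq:def-a^n(g)} splits the proof into two pieces. The prefactor $\exp(\int_{\overline{c}_g} V_0)$ is treated exactly as in Lemma~\ref{lemma:regularity_first_asymptotics}: the integrand $V_0 = (F^{su}+d)/2$ involves one derivative of the flow along the curve $\overline{c}_g$, which is $C^{1+k}$ by the scattered-geodesic lemma of the previous subsection, so the integral depends on $\epsilon$ in a $C^{k-1}$ fashion. This is better than $C^{k-2n}$ as soon as $n\geq 1$, and the case $n=0$ is Lemma~\ref{lemma:regularity_first_asymptotics} itself. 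Hence the main work is to control the regularity of $A_n(p,q)$.

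By H\"ormander's stationary-phase formula (Theorem~7.7.5, used in the proof of Lemma~\ref{lemma:asymptotic_expansion_scattering}), $A_n(p,q)$ is a universal rational expression in finitely many Taylor coefficients at $0_\theta$ of the phase $G_p$ and of the amplitude $\tilde{J}_p f_p^N$, the only denominator being a power of $\det d^2_\theta G_p(0_\theta)$. Unrolling the transport equations \eqref{eq:def-f_n} inductively, each $f_{j,p}$ for $j\leq n$ is obtained by $j$ iterated applications of the second-order operator $Q_p$ starting from $f_{0,p}=1$, hence introduces at most $2j$ further $x$-derivatives of $F_p = \log \tilde{J}_p$. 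A straightforward bookkeeping then shows that the total number of derivatives of the flow $\varphi^p_t$ that enter $A_n$ (in $x$, $t$, or any mixture) is at most $2n+1$. In particular, the highest order derivative of $G_p$ appearing in $A_n$ is $\partial_x^{2n+2} G_p = \partial_x^{2n+1} \nabla G_p$, which costs $2n+1$ $x$-derivatives and one $t$-derivative of $\varphi^p_t$, rather than $2n+2$ derivatives of the flow outright. The nondegeneracy of $d^2_\theta G_p(0_\theta)$ needed for the denominator is uniform along the deformation by Lemma~\ref{lemma:non-degeneracy}.

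The conclusion then follows from Lemma~\ref{lemma:regularity_geodesic_flow}: when $g_\epsilon$ is $C^{2+k}$, the geodesic flow $\varphi^{g_\epsilon}_t$ is jointly $C^{1+k}$ on $\R\times M\times \R_\epsilon$, and so is its restriction $\varphi^p_t$ to $W^{u0}(p)$. Hence every mixed derivative of $\varphi^p_t$ of total order at most $1+k$ is jointly continuous in $(t,x,\epsilon)$, and taking the $2n+1$ derivatives identified in the second paragraph yields joint regularity $C^{(1+k)-(2n+1)} = C^{k-2n}$ in $(x,\epsilon)$, precisely under the hypothesis $k\geq 2n$. Composition with the $C^{1+k}$ map $\epsilon \mapsto 0_\theta(\epsilon)$ preserves $C^{k-2n}$ regularity in $\epsilon$, which proves the lemma. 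The main obstacle in the plan is the bookkeeping of the second paragraph; the content is the observation that the highest $x$-derivative of $G_p$ enters $A_n$ only through $\nabla G_p$, so one pays one unit of $t$-regularity of the flow rather than an extra unit of $x$-regularity, which is what makes the bound $k\geq 2n$ (rather than $k\geq 2n+1$) achievable.
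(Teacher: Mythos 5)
Your proposal follows essentially the same route as the paper: express $a^n$ (up to the universal constant $2^{d/2}$, the $n$-th stationary-phase coefficient $C_n$) in terms of finitely many $\theta$-jets of the phase $G_p$ and of the amplitude $\tilde J_p f_p^N$ at $0_\theta$, unroll the transport equations \eqref{eq:def-f_n} to trace $f_j$ back to at most $2j$ derivatives of $F_p$, and then use the $C^{1+k}$ joint regularity of the flow along the $C^{1+k}$ family of scattered geodesics. The paper's proof differs only in presentation: it works directly with $a^n=2^{d/2}C_n$ rather than splitting off $e^{\int V_0}$ (indeed Lemma~\ref{lemma:regularity_first_asymptotics} explicitly says one should use \eqref{eq:formula-C_0} rather than $V_0$, which is merely H\"older on phase space; your split works because $V_0$ restricted to a scattered geodesic is smoother, but since $A_n=C_n/C_0$ with $C_0=2^{-d/2}e^{\int V_0}$ the two exponentials cancel, so the split is a detour), and it makes your ``straightforward bookkeeping'' concrete via the decomposition \eqref{eq:higher_order_a^n_1} and the closed form \eqref{eq:closed_form_f_n}.

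One step in your argument is stated misleadingly, though the conclusion it supports is correct. You write that $\partial_x^{2n+2}G_p=\partial_x^{2n+1}\nabla G_p$ ``costs $2n+1$ $x$-derivatives and one $t$-derivative of $\varphi^p_t$, rather than $2n+2$ derivatives of the flow outright,'' as if the mixed derivative were somehow of lower order: but a $\partial_x^{2n+1}\partial_t$ derivative of a jointly $C^{1+k}$ function is still of total order $2n+2$, which would give $C^{k-2n-1}$, one degree short. The actual reason the budget is $2n+1$ is either that $\nabla G_p$ itself is $C^{1+k}$ (being the generating vector field of a $C^{1+k}$ flow, hence as regular as the flow — the gain comes from the ODE, not from the mix of $x$ and $t$), or, equivalently and closer to the paper's wording, that $\nabla^2 G_p=\U$ is recoverable from $d\varphi^p_t$ (a single differentiation of the flow) so that $\nabla^{2n+2}G_p$ only requires $2n$ further derivatives of a $C^k$ object. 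With that clarification your count closes, and the proof is correct.
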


In the following proof, we fix two points $p,q$ on the boundary. Most of the functions that appear depend on $p$ and $q$, but to simplify notations, we omit that dependence. We do not fix $n$, but $k$ will always be assumed to be greater or equal to $2n$.

\begin{proof}
Let us start by a discussion of classical stationary phase in $\R^d$. Let $\sigma$ be smooth compactly supported function on $\R^d$. Then, as $|s| \to + \infty$ with $\Re s > 0$,
\begin{equation*}
\int_{\R^d} e^{-s x^2} \sigma(x) dx = \pi^{d/2} s^{-d/2} \left[\sigma(0) + \frac{1}{4s} \Delta \sigma(0) + \dots + \frac{1}{n! 4^n s^n} \Delta^n \sigma(0) + \mathcal{O}(s^{-n-1}) \right]
\end{equation*}
where $\Delta^n \sigma = \Delta \dots \Delta \sigma$. If $G$ is a non-degenerate phase function around $0$, we find $\Psi$ smooth around $0$ such that $G\circ \Psi(x) = x^2$, by Morse theory. Then, if $\sigma$ is still compactly supported but has an expansion $\sigma(s,x) \sim \sigma_0(x) + \sigma_1(x)/s + \dots + \sigma_n(x)/s^k + \dots$, we find
\begin{equation*}
\int_{\R^d} e^{-s G(x)} \sigma(s,x) dx \sim \left(\frac{\pi}{s}\right)^{d/2}\left[ \sum_{n=0}^\infty \sum_{l=0}^ \infty s^{-n-l} \frac{1}{l ! 4^l}\Delta^l(\sigma_n\circ\Psi . \Jac(\Psi))(0) \right].
\end{equation*}
In other words, the coefficient of $\pi^{d/2} s^{-d/2 - n}$ is
\begin{equation}\label{eq:coefficient-stationary-phase}
\sum_{l=0}^n \frac{1}{l ! 4^l}\Delta^l (\sigma_{n-l}\circ\Psi . \Jac(\Psi))
\end{equation}
It is a well known fact that the Morse chart $\Psi$ is not uniquely defined. However, from the computations above, the operators
\begin{equation*}
\sigma \mapsto \Delta^j (\sigma\circ \Psi .\Jac \Psi)(0)
\end{equation*}
do not depend on the choice of $\Psi$, but only on $G$. By writing the condition $G \circ \Psi = x^2$, one can see that $d\Psi^T(0) .d\Psi(0)= d^2 G(0)$. This determines $d\Psi(0)$. The higher order derivatives of $\Psi$ are undetermined, but one can see that they can be chosen recursively, so that $d^k \Psi(0)$ only depends on the $(k+1)$-jet of $G$ at $0$.

We apply the discussion above to $a^n(g, [c])$. The notations are coherent with section \ref{sec:main_asymptotics} and equation \eqref{eq:stationary_1} if one set $G= G_p$ and $\sigma= \tilde{J}_p f^N_p$. We decompose $a^n$, following equation \eqref{eq:coefficient-stationary-phase}. We find that $a^n$ depends on derivatives of $\tilde{J}$, $f$ and $G$. We expand each summand in the decomposition, using the Leibniz rule. Then we gather the terms involving the highest order derivatives of the metric. They are
\begin{equation}\label{eq:higher_order_a^n_1}
\frac{1}{n ! 4^n} \left(\Jac \Psi(0) \Delta_\theta^n \tilde{J}(0) + \tilde{J}(0)\Delta_\theta^n \Jac \Psi(0) \right) + \tilde{J}(0)\Jac \Psi(0) \sum_{l=1}^n \frac{1}{(n-l) ! 4^{n-l}}\Delta_\theta^{n-l}(f_l\circ \Psi).
\end{equation}

In the proof of \ref{lemma:regularity_first_asymptotics}, we saw that along a $C^{2+k}$ perturbation, $\tilde{J}$ is $C^k$, so that $\Delta_\theta^n (\tilde{J}\circ \Psi)$ is $C^{k-2n}$. By the same argument, we find that the $(2n+2)$-jet of $G$ at $0_\theta$ is a $C^{k-2n}$ function of $\epsilon$, so that $\Delta_\theta^n \Jac \Psi$ is also $C^{k-2n}$.

\begin{remark*}
One can check that the numbers $\Delta^j \Jac \Psi(0)$ are, up to universal constants, the Taylor coefficients in the expansion of the function $\vol(G \leq r^2)$. 
\end{remark*}

Now, we deal with the $f_n$'s. From the definition of $Q$ in \eqref{eq:def-Q} and $f_n$ in \eqref{eq:def-f_n}, we can prove by induction that for $n\geq 1$,
\begin{equation}\label{eq:closed_form_f_n}
f_n = \frac{1}{2^n} \int_{-\infty}^0 dt_n \left[\int_{t_n}^0 dt_{n-1} \cdots \int_{t_2}^0 dt_1 Q_{t_{n-1}} \cdots Q_{t_1} Q_0 f_0 \right] \circ \varphi^p_{t_n}
\end{equation}
where $Q_t$ is defined by $Q (f\circ \varphi^p_t) = (Q_t f)\circ \varphi^p_t$. Since $F$ is essentially a jacobian of $\varphi^p_t$, it is $C^k$ on $\R \times M$ along a $C^{2+k}$ perturbation. From the formula \eqref{eq:closed_form_f_n}, we deduce that $f_n$ is $C^{k-2n}$ along a $C^{2+k}$ perturbation when $k\geq 2n$, and this ends the proof.

\end{proof}

\subsection{Openness in $C^\infty$ topology}

To find that the coefficients of the parametrix are open, we are going to adopt a different point of view from the previous section. We let $a^{-1}(g,[c]) = T_g ([c])$. We aim to prove the following:
\begin{lemma}\label{lemma:openness-coefficients}
Let $[c_1], \dots, [c_N]$ be distinct elements of $\pi_1^{i_1 j_1}(M), \dots, \pi_1^{i_N j_N}(M)$, and take indices $n_1, \dots, n_N$. Then the application 
\begin{equation*}
a_N : g \mapsto \left( (a^{-1}, a^0, \dots, a^{n_1})(g,[c_1]), \dots, (a^{-1}, a^0, \dots, a^{n_N})(g,[c_N]) \right)\in \R^{\sum n_i+1}
\end{equation*}
is open in $C^\infty$ topology on $g$.
\end{lemma}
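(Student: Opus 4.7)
The strategy is to show that for every $g \in \mathcal{G}(M)$, the differential $d_g a_N$, acting on the Fr\'echet space of smooth symmetric $(0,2)$-tensors, is surjective onto the target $\R^{\sum (n_i+2)}$. Openness then follows by a standard finite-dimensional reduction: choose $h_1,\dots,h_M$ whose images span the target, and consider the finite-parameter family $g_{\mathbf{t}} = g + \sum t_i h_i$; the map $\mathbf{t} \mapsto a_N(g_{\mathbf{t}})$ has invertible differential at $0$, so by the classical inverse function theorem a neighborhood of $a_N(g)$ lies in the image of $a_N$.

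For the localization step, I pick, for each $k$, a point $x_k \in \overline{c}_k$ and pairwise disjoint open neighborhoods $U_k \ni x_k$ with $U_k \cap \overline{c}_\ell = \emptyset$ for $\ell \neq k$; this is possible because $\overline{c}_1,\dots,\overline{c}_N$ are finitely many pairwise distinct smooth curves. A perturbation $h$ supported in $U_k$ leaves $\overline{c}_\ell$ a scattered geodesic of $g+h$ for $\ell \neq k$, and does not affect the geodesic flow, the Jacobi fields along $\overline{c}_\ell$, nor any of the quantities entering $a^n(g,[c_\ell])$. Hence $d_g a_N(h)$ is supported in the $k$-th block of coefficients, and it suffices to treat each $k$ independently.

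Fix $k$. By Section~\ref{section:regularity_higher_coeff}, $a^n(g,[c_k])$ is a smooth function of the $(2n+2)$-jet of $g$ along $\overline{c}_k$; shrinking $U_k$ to a small neighborhood of $x_k$, this dependence is carried by the $(2n_k+2)$-jet of $g$ at the single point $x_k$. The differential $d_g a_N$ restricted to perturbations supported in $U_k$ then factors through a linear map
\begin{equation*}
L_k : J^{2n_k+2}_{x_k}(\mathrm{Sym}^2 T^\ast M) \longrightarrow \R^{n_k+2},
\end{equation*}
and, since every such jet is the jet of some smooth $h$ with support in $U_k$, surjectivity on the $k$-th block is equivalent to surjectivity of $L_k$. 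I would establish this by a triangular argument organized by jet order. A conformal bump $h = \chi g$ with $\chi(x_k) \neq 0$ changes the length of $\overline{c}_k$ to first order in $\chi(x_k)$, giving $\delta a^{-1} \neq 0$. A perturbation whose $1$-jet vanishes at $x_k$ but whose $2$-jet does not alters the sectional curvature at $x_k$; this enters through the Jacobi fields that compute $\tilde{J}_p(0_\theta)$ and $d^2_\theta G_p(0_\theta)$, producing $\delta a^0 \neq 0$. More generally, a perturbation with vanishing $(2n+1)$-jet but nonzero $(2n+2)$-jet at $x_k$ modifies the $(2n)$-derivative of the curvature tensor along $\overline{c}_k$, and through the $n$-fold iterated integral \eqref{eq:closed_form_f_n} for $f_n$ and the Morse-normal-form expansion \eqref{eq:higher_order_a^n_1}, it produces a nonzero $\delta a^n$ while leaving $\delta a^j = 0$ for $j < n$. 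The matrix of $L_k$ in such a basis is thus lower triangular with nonzero diagonal, hence surjective.

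The main obstacle is the nonvanishing of the diagonal entries: for each $n \in \{-1,0,\dots,n_k\}$ one must exhibit an explicit $(2n+2)$-jet perturbation at $x_k$ whose leading contribution to \eqref{eq:higher_order_a^n_1} does not identically vanish on $\mathcal{G}(M)$. This boils down to a concrete if tedious computation of Jacobi fields and iterated transport along $\overline{c}_k$, the key input being that a generic perturbation of a given order of the curvature tensor acts nondegenerately on the corresponding derivatives of $\tilde{J}_p$, $d^2_\theta G_p$ and $f_n$. With this step verified, the rest is linear algebra and the finite-dimensional reduction above delivers openness.
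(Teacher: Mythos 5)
Your proposal captures the right skeleton: differential surjectivity, finite-dimensional reduction via the inverse function theorem, localization of perturbations to disjoint neighborhoods of each $\overline{c}_k$ so the blocks decouple, and a jet-order triangular structure so one only needs to hit the diagonal. This matches the paper's outline closely. But the crucial content of the lemma — the actual nonvanishing of the diagonal entries — is exactly the step you postpone: you write that one ``must exhibit an explicit $(2n+2)$-jet perturbation at $x_k$ whose leading contribution to \eqref{eq:higher_order_a^n_1} does not identically vanish'' and then declare ``With this step verified, the rest is linear algebra.'' That deferred step is essentially the whole lemma. The paper does carry it out: it derives the explicit variation formula \eqref{eq:da^0_g} for $a^0$ (using a Jacobi-field variation-of-parameters computation, the Wronskian, and the Fermi-coordinate identity $d_gK.h = -\tfrac12 d^2 h_{11}$), and for $n\geq1$ it proves the ``constancy of $a^{-1},\dots,a^{n-1}$'' statement rigorously and reduces the variation of $a^n$ to the explicit formula \eqref{eq:da^n_h}, then chooses the monomial ansatz $h = \lambda(s)\,dx^2\sum_{|\alpha|=2n}u^\alpha x^\alpha$ to see the integrand is a manifestly positive, hence nonzero, function $H(t)$. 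Without something of this sort, ``generic perturbation acts nondegenerately'' is an appeal to intuition, not a proof.

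Two further technical points you omit. First, your claim that (after shrinking $U_k$) the dependence of $a^{n}(g,[c_k])$ ``is carried by the $(2n_k+2)$-jet of $g$ at the single point $x_k$'' is not accurate: the coefficients are integrals along the geodesic, so they see $g$ along the whole curve; what is true is that the \emph{variation} under an $h$ supported near $x_k$ is controlled by the jet of $h$ there, which requires the concrete formulas to make precise. Second, the paper has to deal with the situation $t_1 = T_g([c]) + \log b_i b_j < 0$, i.e.\ the stationary point $0_\theta$ lies in the constant-curvature region, where a support-localized perturbation cannot touch it without changing $b_i, b_j$ themselves. This is resolved by redoing the stationary-phase analysis at a lower height $b_j^\star$, and your argument would break without some such fix. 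You also gesture at but don't handle the self-intersection issue; the paper explicitly perturbs away from self-intersection points.

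In short: correct high-level architecture, but the proof's actual content — the computation certifying nondegeneracy — is missing, along with the $b^\star$ device needed to make sense of perturbations when the relevant stationary point sits in the rigid part.
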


\begin{proof}
First, observe that it suffices to prove that the differential of $a_N$ is surjective. Indeed, we can then use the local inversion theorem to prove the openness property.

For each class $[c_i]$, we will compute the variation of $(a^0, \dots, a^{n_i})$ along a well chosen smooth family of cusp metrics $g_\epsilon$. We will find that this variation is a linear form in a jet of the variation $\partial_\epsilon g_\epsilon$ along $\overline{c}_i$. From the properties of this linear form, we will find that there are functions with arbitrary compact support on which it does not vanish. This will prove the lemma for $N=1$.

For the case when $N\geq 1$, observe that since the $[c_i]$ are distinct, the $\overline{c}_i$ are also distinct. Then, it suffices to observe that we can take a finite number of small open sets $U_i$ such that $U_i \cap U_j = \emptyset$ when $i\neq j$, and $U_i \cap \overline{c}_i \neq \emptyset$. Then we can perturb in each open set independently, and in this way, we see that the differential of $a_N$ is surjective, and this ends the proof.

\begin{remark} There might seem to be a difficulty when the geodesic $\overline{c}$ has a self intersection, because at the point of intersection, we have less liberty on the perturbations we can make. However, we will always choose to perturb away from those intersection points. 
\end{remark}

As we have reduced the proof to the case $N=1$, let $[c] \in \pi_1^{ij}(M)$. 

\textbf{First case, $\mathbf{n=0}$.}
\begin{lemma}
Let $g_\epsilon$ be a $C^\infty$ family of cusp metrics. Then
\begin{equation*}
\partial_\epsilon T([c]) = \frac{1}{2}\int (\partial_\epsilon g)_0(\overline{c}'_0(t), \overline{c}'_0(t))dt.
\end{equation*}
In particular, if $U$ is an open set that intersects $\overline{c}_0$, one can find a perturbation of the metric, supported in $U$, along which $\partial_\epsilon T([c])\neq 0$.
\end{lemma}

\begin{proof}
From the arguments above, we can construct a variation $c_\epsilon$ of $\overline{c}_0$ such that each $\gamma_\epsilon$ is an \emph{unparametrized} geodesic for $g_\epsilon$. We can assume that for $t$ negative (resp. positive) enough, $y_i(c_\epsilon) = y_i(\overline{c}_0)$ (resp. $y_j(c_\epsilon) = y_j(\overline{c}_0)$). Then in local coordinates
\begin{equation*}
\partial_\epsilon T([c]) = \frac{1}{2}\int (\partial_\epsilon g)_0(\overline{c}'_0(t), \overline{c}'_0(t)) + 2g_0(\overline{c}'_0(t), \partial_\epsilon c'_0(t)) dt
\end{equation*}
In the RHS, the second term, we can interpret as the 1st order variation of the length of the curve $c_\epsilon$ for $g_0$. Since $\overline{c}_0$ is a geodesic, this has to be zero
\end{proof}

\begin{lemma}
The logarithmic differential
\begin{equation*}
d_g \log a^0(g,[c])
\end{equation*}
is non-degenerate on the set of symmetric $2$-tensors $h$ on $M$ such that $h$ and $dh$ vanish at $\overline{c}$. This proves the property for $n=0$, because if $h$ is such a $2$-tensor, along the perturbation $g+\epsilon h$, the curve $\overline{c}$ is always a scattered geodesic of constant sojourn time, and $d_g a^{-1}(g, [c]).h = 0$.
\end{lemma}

\begin{proof}
since the curve $\overline{c}$ does not depend on the metric in our context, it is reasonable to use the method of variation of parameters. Let $g_\epsilon = g + \epsilon h$, and consider $\J_{u, \epsilon} = \J_u + \epsilon \tilde{\J}_u + o(\epsilon)$ the unstable Jacobi field for $g_\epsilon$ along $\overline{c}$ defined in page \pageref{page:def-J_u}. We also use $\J_s$ as defined in the same page. We can write 
\begin{equation*}
\tilde{\J}_u = \J_u \widetilde{A}(t) + \J_s \widetilde{B}(t)
\end{equation*}
and we find the equations for $\widetilde{A}$ and $\tilde{B}$:
\begin{align*}
\J_u \widetilde{A}' + \J_s \widetilde{B}' &=0 \\
\J_u' \widetilde{A}' + \J_s' \widetilde{B}' &= - (d_g K.h) \J_u.
\end{align*}
Recall from the arguments in page \pageref{page:def-J_u} that 
\begin{equation*}
a^0(g_\epsilon,[\overline{c}])^2 = \lim_{t\to + \infty} \frac{e^{d.G_p(0_\theta)}}{\det \J_u \J_s}= a^0(g_\epsilon, [\overline{c}])^2 \frac{1}{\det \mathbf{1} + \epsilon \widetilde{A}(+\infty)} .
\end{equation*}
where $\widetilde{A}(+\infty)$ is the limit of $\widetilde{A}$ when $t\to \infty$. Hence
\begin{equation*}
\frac{d}{d\epsilon} \log a^0(g_\epsilon, [\overline{c}]) = - \frac{1}{2} \Tr \widetilde{A}(+\infty).
\end{equation*}
We find
\begin{equation*}
\widetilde{A}(+\infty) = \int_{\R} \J_u^{-1}(\Ss - \U)^{-1}(d_g K(t).h)\J_u(t) dt
\end{equation*}
and conclude
\begin{equation}\label{eq:da^0_g}
\frac{d}{d\epsilon} \log a^0(g_\epsilon, [\overline{c}]) = \frac{1}{2} \int_\R \Tr \left\{(\U - \Ss)^{-1}(d_g K(t).h)\right\} dt
\end{equation}
When the curvature of $g$ is constant along $\overline{c}$, one may observe that this gives a particularly simple expression. Now we prove that the differential $h\mapsto d_g K.h$ is surjective on the set of symmetric matrices along the geodesic $\overline{c}$. We consider Fermi coordinates along $\overline{c}$. That is, the coordinate chart given by
\begin{equation*}
(x_1; x') \mapsto \exp_{\overline{c}(x_1)}\left\{ x' \right\} \in  N .
\end{equation*}

\begin{remark} When $\overline{c}$ has self-intersection, this chart is not injective. However, we can assume that $h$  vanishes around such points of intersection, and the computations below remain valid.
\end{remark}

In those coordinates, $g-\mathbf{1}$ and $dg$ vanish along the geodesic, which is $\overline{c}\simeq \{ x'=0 \}$. We deduce that the Christoffel coefficients $\Gamma^k_{ij}$ also vanish to second order on $\overline{c}$. Now we recall from \cite{Paulin-geom-riem} two useful formulae.
\begin{align}
\tag{p.210} \Gamma_{ij}^k &= \frac{1}{2} \sum_l g^{lk}(\partial_i g_{jl} + \partial_j g_{li} - \partial_l g_{ij}) \\
\tag{p.211} R(\partial_i, \partial_j) \partial_k &= \sum_l \left\{ \partial_i \Gamma_{jk}^l - \partial_j \Gamma_{ik}^l + \sum_m \Gamma_{jk}^m \Gamma_{im}^l - \Gamma_{ik}^m \Gamma_{jm}^l \right\} \partial_l.
\end{align}
Whence we deduce that on $\overline{c}$
\begin{align}
R_g (\partial_i, \partial_1) \partial_1 &= \sum_{l=1}^{d+1} \partial_i \Gamma^l_{11}\partial_l, \notag \\
								& = -\frac{1}{2} \partial_i \partial_k g_{11} \partial_k.
\end{align}
We see that $2 K(t) = - d^2 g_{11} (t)$, so that $d_g K.h = -1/2 d^2 h_{11}$, and this is certainly surjective onto the set of smooth functions along the geodesic valued in symmetric matrices. In particular, the RHS of \eqref{eq:da^0_g} defines a non-degenerate linear functional on the set of compactly supported $2$-symmetric tensors along $\overline{c}$. 
\end{proof}

This ends the case $n=0$.

\textbf{General case, $\mathbf{n\geq 1}$.} We introduce a special coordinate chart on $ \widetilde{M} $:
\begin{equation*}
\varsigma_g : (x,t) \in H(p,b_i) \times \R \mapsto \varphi^q_t(x).
\end{equation*}
Since $H(p,b_i) \simeq \R^d$, we are now working in $\R^{d+1}$. In the coordinates $\varsigma_g$, the flow has a very simple expression: $\varphi^p_t(x,s) = (x, s+t)$. The metric also:
\begin{equation}\label{eq:decomp-varsigma-metric}
\varsigma^\ast g = \tilde{g}(x,t ; dx) + dt^2;
\end{equation}
the jacobian
\begin{equation}\label{eq:expression_simplified_jacobian}
Jac(\varphi^q_t)(x,s) = \sqrt{\frac{\det \tilde{g}(x,s+t)}{\det \tilde{g}(x,s)}},
\end{equation}
and
\begin{equation}\label{eq:expression_simplified_F}
F(x,s) = \frac{1}{4}\left( \log \det \tilde{g}(x,s) - \log \det \tilde{g}(x,0) + 2 s d \right).
\end{equation}
We can find that $\tilde{g}(x,0)$ actually does not depend on $x$. We also have that $G_p(x,s) = s - \log b_i$.

\textbf{Idea of proof.} If we perturb $\tilde{g}$ by a symmetric 2-tensor $h$ on the slices, we obtain a new metric $\tilde{g}_h$ on $\R^{d+1}$. We can obtain a metric $g_h^1$ on $\widetilde{M}$, pushing forward by $\varsigma_g$. If the support $\Omega$ of this perturbation is small enough that $\gamma \Omega \cap \Omega = \emptyset$ for all $\gamma \neq 1$, we can periodize the perturbation to obtain a metric on $M$, or equivalently, a metric $g_h$ invariant by $\Gamma$ on $\widetilde{M}$. 

The metric $g_h$, seen in the chart $\varsigma_g$, does not have the nice decomposition \eqref{eq:decomp-varsigma-metric} anymore. However, that decomposition still holds in the complement of $\Upsilon:=\bigcup_{\gamma \notin \Gamma_p} \gamma \Omega$. If $\Omega$ was well chosen, this includes a neighbourhood $\Omega'$ of the geodesic $\overline{c}$ that we wanted to perturb.

\begin{figure}[h]
\centering
\def\svgwidth{0.35\linewidth}
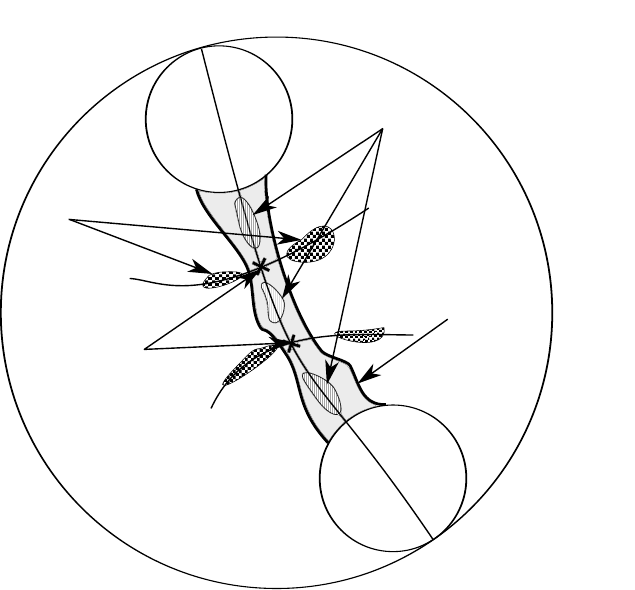
\caption{\label{fig:perturbation-global-situation}Global situation.}
\end{figure}

The condition for $\Omega$ to be appropriate is that the projection $\widetilde{M} \to M$ is injective on $\Omega$, and that $\gamma \Omega$ does intersect the lift $[p,q]$ of $\overline{c}$. For this, it suffices that $\Omega$ is not too close to the points $I$ in $[p,q]$ that project to self-intersection points of $\overline{c}$. See figure \ref{fig:perturbation-global-situation}.

There is a last difficulty. The point $0_\theta$ is represented by $(0, t_1)$ with $t_1 =  T_g(g, [c]) + \log b_i b_j$ --- see the paragraph after equation \eqref{eq:symbol_R^d}. It is possible that $t_1 < 0$. In that case, the geodesic $\overline{c}$ only encounters constant curvature. To perturb the coefficients, we will need to create variable curvature along the geodesic. In particular, that will change the values of $b_i$ and $b_j$. 

To overcome this difficulty, we proceed in the following way. Instead of integrating along the projected horosphere at height $b_j$ in the cusp $Z_j$, we integrate on the projected horosphere at height $b_j^\star \geq b_j$ in the proof of theorem \ref{theorem:parametrix_phi}. We do it so that for all $[\overline{c}]\in \pi^{ij}_1(M)$, $\mathcal{T}(\overline{c}) + \log b_i^\star b_j^\star > 0$ (for all $i, j$\dots). Since the marked sojourn time function is proper, only a finite number of scattered geodesics intervene here. All quantities that depended on $b_i$, $b_j$ before will now receive a $\star$ when we replace $b_i$ by $b_i^\star$.

%\begin{remark} We will perturb the metric $g$ to $g_h$ in this chart, and then obtain the metric $(\varsigma_g^\star)_\ast g_h$ on $\widetilde{M}$. However, we have to be careful that such a perturbation is invariant by $\Gamma$, and so corresponds to a metric on $M$ also. As we will only need to specify some jet of $g_h$ at $x=0$, we can always assume that the support of $h$ is very close to $\{x=0\}$, and compact. Let $K$ be such a compact set. 
%
%Let $I$ be the set of points of $\overline{c}$ (in $\widetilde{M}$) that correspond to self intersection points in $M$. Then, provided $K$ is small enough and $K \cap I = \emptyset$, $\pi_{|K} : K \to M$, the restriction of $\pi : \widetilde{M} \to M$ to $K$, is injective, and we can project $g_h$ to $M$. As a consequence, we will look for perturbations of $g$ that are supported away from $I$.
%\end{remark}

\begin{figure}[h]
\centering
\def\svgwidth{0.35\linewidth}
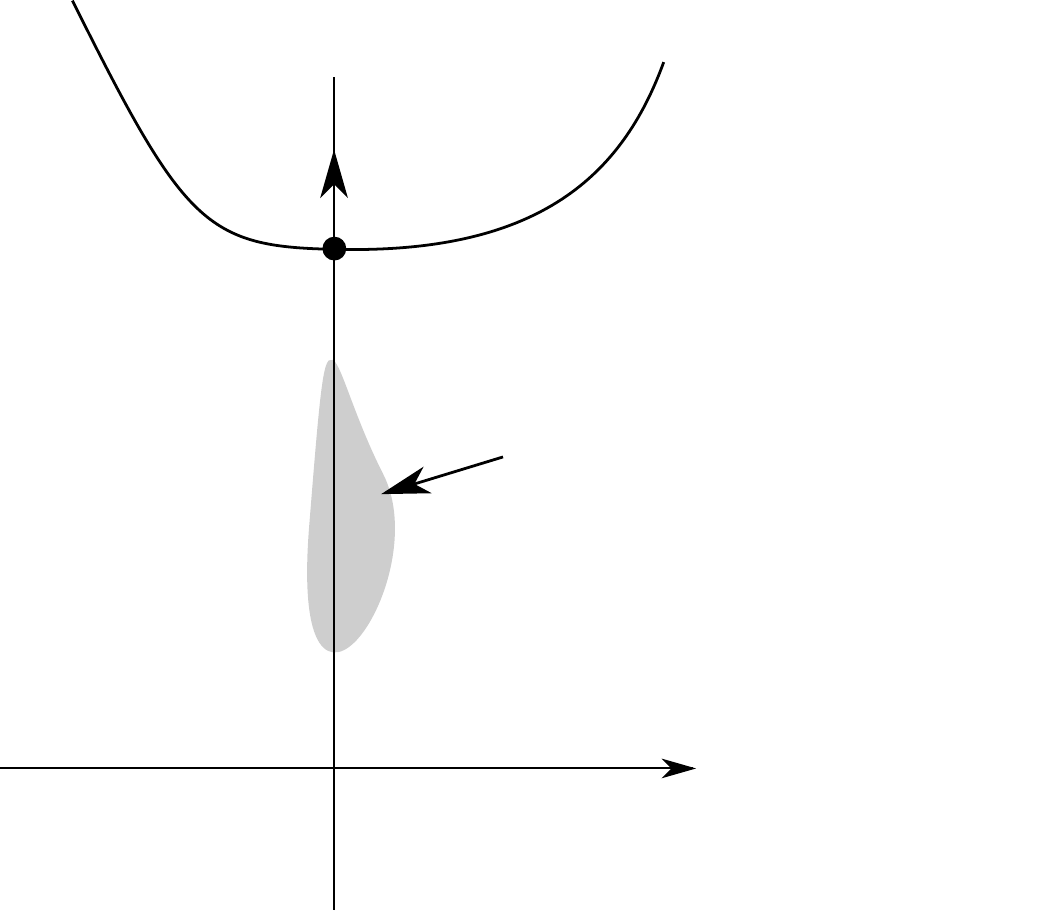
\caption{\label{fig:coordinates-varsigma}A close up.}
\end{figure}

Coming back to perturbing coefficients, the point $0_\theta^\star$ is represented by $(0, t_1^\star)$ with $t_1^\star = T_g(g, [c]) + \log b_i^\star b_j^\star$. If the perturbation $h$ is compactly supported in $\{ 0 < t < t_1^\star\}$, the expression of $g_h$, $H(q, b_j^\star)$, $\Delta_\theta^\star$, $\tilde{J}$, $G_p$ will not depend on $h$ in the chart $\varsigma_g^\star$, around $0_\theta^\star$. In particular, $a^{-1}$ and $a^0$ are always constant along such a perturbation.

Now, we assume that the change in the slices is $\epsilon h$ where $h$ is a 2-symmetric tensor such that $h, dh, \dots, d^{2n-1}h$ vanish along $\{ x = 0\}$ in the chart $\varsigma_g^\star$, and $h$ is supported for $0 < T < s < T' < t_1^\star$. Let $g_\epsilon := g_{\epsilon h}$.
\begin{lemma}
Under such a perturbation, $a^{-1}, a^0, \dots, a^{n-1}$ are constant.
\end{lemma}

\begin{proof}
As we saw in section \ref{section:regularity_higher_coeff}, the coefficient $a^k$ is computed from the $2k - 2\ell$ jet of $f_\ell$, $\ell=1, \dots, k$, at $0_\theta$, and also the $2k$ jet of $\tilde{J}$ and $G$. Those computations are done with $\Delta_\theta$, which in our chart $\varsigma_g$ has a complicated expression. However, since we are not perturbing the metric around $0_\theta$, the coefficients of $\Delta_\theta$ do not change under the perturbation. From equation \eqref{eq:expression_simplified_jacobian}, and the expression for $G$ in this chart, we see that the contribution of $\tilde{J}$ and $G$ to $a^k$ will not change under perturbation (independently from the order of cancellation of $h$).

We are left to prove that the $2k-2\ell$ jet of $f_\ell$ at $0_\theta$ does not change for $0 \leq \ell \leq k \leq n-1$. From formula \eqref{eq:expression_simplified_F}, we see that the $2n-1$ jet of $F$ along $\overline{c}$ will not change along the perturbation. From equation \eqref{eq:closed_form_f_n}, we see that the $m$ jet of $f_\ell$ at $0_\theta$ depends on the $m + 2\ell$ jet of $F$, and the $m+ 2\ell - 1 $ jet of $g$ --- recall that the coefficients in the Laplacian $\Delta$ depend on $dg$, and the coefficients in $\nabla $ depend on $g$. Taking this for $m=2k - 2\ell$ and $\ell \leq k \leq n-1$, we find that the $2k-2\ell$ jet of $f_\ell$ can be computed with only the $2n-2$ jet of $g$ at $\overline{c}$, and this proves the lemma.
\end{proof}

From the proof of the lemma above, we see that in $a^n$, the only change will come from the change in the derivatives of order $2n-2k$ of $f_k$, and more precisely, the parts of these variations that come from the change in $2n$ derivatives of $F$, in the $x$ direction. As a consequence, we can do all the forecoming computations as if the differential operators appearing had constant coefficients, and replace $\Delta$ (resp. $\Delta_\theta( \cdot \circ \Psi)\circ \Psi^{-1}$) by
\begin{equation*}
\tilde{\Delta} := \tilde{g}_{ij}(s)\partial_i \partial_j \quad (\text{resp. } \tilde{\Delta}_\theta := c_{ij}\partial_i \partial_j)
\end{equation*}
where the matrices $(\tilde{g}_{ij})(t)$ and $(c_{ij})$ are symmetric, positive matrices. Recall the metric $g$ has the expression
\begin{equation*}
g_{(x,t)}(dx,dt) = \tilde{g}_{x,t}(dx) + dt^2
\end{equation*}
and $(\tilde{g}_{ij})(t)$ is the value of $\tilde{g}_{0,t}^{-1}$, but this fact will not be used later. Recall that the operator $Q_t$ was defined by $(Q_t f)\circ \varphi^q_t = Q (f\circ \varphi^q_t)$. We define $\tilde{\Delta}_t$ in the same way. An easy computation shows that $\tilde{\Delta}_t = \sum g_{ij}(s-t)\partial_i \partial_j$. 

Now, we use formula \eqref{eq:higher_order_a^n_1}. We only keep the terms that vary under the perturbation $g_\epsilon$. This yields
\begin{equation*}
a^n(g_\epsilon, [\overline{c}]) - a^n(g, [\overline{c}]) = a^0(g,[\overline{c}])\sum_{l=1}^n \frac{1}{(n-l)! 4^{n-l}} \tilde{\Delta}_\theta^{n-l} \left\{(f_l)_\epsilon - f_l\right\}.
\end{equation*}
Next we use equation \eqref{eq:closed_form_f_n}, leaving out the constant terms again. We find:
\begin{equation*}
\frac{a^n(g_\epsilon, [\overline{c}]) - a^n(g, [\overline{c}])}{a^0(g,[\overline{c}])} =\sum_{l=1}^n \frac{1}{(n-l)! 4^n 2^{-l}} \int_{\mathfrak{S}} \hspace{-5pt} dt_l  \dots \  dt_1 \ \tilde{\Delta}_\theta^{n-l}\tilde{\Delta}_{t_{l-1}} \dots \tilde{\Delta}_{t_1} \tilde{\Delta} \left\{F_\epsilon - F\right\} (0, t_1^\star + t_l)
\end{equation*}
Here, $\mathfrak{S}$ is the simplex $\{ -\infty < t_l \leq t_{l-1} \leq \dots \leq t_1 \leq 0 \}$. Let $t_0=0$. Now, since $4 d_x F = \Tr g^{-1}dg$, each integrand in the above formula reduces to
\begin{equation}\label{eq:da^n_h}
\frac{\epsilon}{4} \Tr\left\{ g^{-1}(t_1^\star + t_l)\sum_{\{ (i_m, j_m) \}} \prod_{m=l}^{n-1} c_{i_{m} j_{m}}\prod_{m=0}^{l-1} \tilde{g}_{i_m j_m}(t_1^\star + t_l - t_m) \left(\prod_{m=1}^n \partial_{i_m}\partial_{j_m}\right) h(t_1^\star + t_l)  \right\}.
\end{equation}
It is still not clear why such a formula would lead to a non-degenerate differential. However, let us assume that $h$ has the following form in a neighbourhood of $\{ x=0\}$
\begin{equation*}
h(x,s, dx) = \lambda(s)dx^2 \sum_{|\alpha| = 2n} u^\alpha x^\alpha + o(|x|^{2n})
\end{equation*}
where $u^\alpha = u_{\alpha_1} \dots u_{\alpha_{2n}}$, and likewise for $x^\alpha$. We take $u$ a constant vector in $\R^d$, and $\lambda(s)$ a smooth function, supported in $]0, t_1^\star[$. Formula \eqref{eq:da^n_h} becomes
\begin{equation*}
(c(u,u))^{n-l} \Tr g^{-1}(t_1^\star + t_l)\left\{\lambda(t_1^\star + t_l) \prod_{m=0}^{l-1}  \tilde{g}(t_1^\star + t_l - t_m)(u,u) \right\} .
\end{equation*}
Observe that these are nonnegative numbers. From those computations, we see that
\begin{equation*}
\frac{a^n(g_\epsilon, [\overline{c}]) - a^n(g, [\overline{c}])}{a^0(g,[\overline{c}])} = \epsilon \int_0^{t_1^\star}  \lambda(t) H(t)dt
\end{equation*}
where $H(t)$ is a function that does not vanish. This ends the proof for $n\geq 1$.
\end{proof}

\section{Applications}\label{section:Application}

We use simple Complex Analysis to locate zones without zeroes for $\varphi$. We also give some explicit examples corresponding to part (I) and (III) of the main theorem.
 
\subsection{Complex Analysis and Dirichlet Series}

Let $\lambda_0 < \lambda_1 < \dots < \lambda_k < \dots$ be positive real numbers. For $\delta>0$, we let $\mathscr{D}(\delta, \lambda)$ be the set of Dirichlet series $L(s)$ whose abscissa of absolute convergence is $\leq \delta$, and
\begin{equation*}
L(s) = \sum_{k=0}^\infty \frac{a_k}{\lambda_k^s}.
\end{equation*}
We let $\mathscr{D}^k(\delta, \lambda)$ be the set of $L\in \mathscr{D}(\delta, \lambda)$ such that $a_0 = \dots = a_{k-1}= 0$ and $a_k \neq 0$. For $0 < \lambda_\# \leq \lambda_0$, also consider $\mathscr{D}(\delta, \lambda, \lambda_\#)$ the set of holomorphic functions $f$ on $\{ \Re s > \delta\}$ such that there are $L_n \in \mathscr{D}(\delta, \lambda)$ with, for all $n\geq 0$
\begin{equation*}
f(s)=  L_0(s) + \frac{1}{s} L_1(s) + \dots + \frac{1}{s^n}L_n(s) + \mathcal{O}\left( \frac{1}{s^{n+1}\lambda_\#^s} \right).
\end{equation*}
We will denote $(a^n_k)$ the coefficients of $L_n$. By taking notations coherent with the rest of the article, we have $s^{\kappa d/2}\varphi \in \mathscr{D}(\delta_g, \lambda, \lambda_\#)$. For $\delta' > \delta$ and $C>0$, let
\begin{equation*}
\Omega_{\delta', C} := \left\{s\in \C \quad \Re s > \delta' \quad \Re s \leq C \log |\Im s| \right\}.
\end{equation*}

\begin{lemma}\label{lemma:zeroes_Dirichlet_function_1}
Let $f\in \mathscr{D}(\delta, \lambda, \lambda_\#)$ such that $L_0 \in \mathscr{D}^0(\delta, \lambda)$. Then there is a $\delta'>\delta$ such that for any constant $C>0$, $f$ has a finite number of zeroes in $\Omega_{\delta',C}$.

In the special case where $\lambda_\# = \lambda_0$, we can take $\delta' >0$ such that $f$ has \emph{no} zeroes in
\begin{equation*}
\left\{ s\in \C \quad \Re s > \delta' \right\}.
\end{equation*}
\end{lemma}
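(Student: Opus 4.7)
My plan is to factor out the leading asymptotic term $a^0_0 \lambda_0^{-s}$ from $f$ and show the remaining factor is close to $1$ in the target regions. Writing $L_0(s) = a^0_0 \lambda_0^{-s} + \widetilde{L}_0(s)$ with $\widetilde{L}_0(s) = \sum_{k \geq 1} a^0_k \lambda_k^{-s}$, and using the asymptotic expansion up to order $N$, I would write
\begin{equation*}
f(s) = \frac{a^0_0}{\lambda_0^s}\bigl(1 + E_1(s) + E_2(s) + E_3(s)\bigr),
\end{equation*}
where $E_1(s) = (\lambda_0^s/a^0_0)\widetilde{L}_0(s)$, $E_2(s) = (\lambda_0^s/a^0_0)\sum_{n=1}^{N} L_n(s)/s^n$, and $E_3(s)$ is $\lambda_0^s/a^0_0$ times the $O(1/(s^{N+1}\lambda_\#^s))$ remainder. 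Proving $f(s) \neq 0$ then reduces to ensuring $|E_1|+|E_2|+|E_3| < 1$.

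For $\Re s > \delta + \epsilon$, absolute convergence of each $L_n$ gives the uniform bound $|\lambda_0^s L_n(s)| \leq \sum_k |a^n_k|(\lambda_0/\lambda_k)^{\delta+\epsilon}$, which is finite since $\lambda_k \geq \lambda_0$. From this I would extract $|E_1(s)| = O((\lambda_0/\lambda_1)^{\Re s})$ (exponentially small in $\Re s$), $|E_2(s)| = O(1/|s|)$, and $|E_3(s)| = O((\lambda_0/\lambda_\#)^{\Re s}/|s|^{N+1})$. The one delicate bound is the last: when $\lambda_\# < \lambda_0$ the prefactor $(\lambda_0/\lambda_\#)^{\Re s}$ grows exponentially, which is the main technical obstacle.

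For the first assertion, given $C>0$ I would choose $N$ large enough that $N+1 > C\log(\lambda_0/\lambda_\#)$. In $\Omega_{\delta',C}$, the constraint $\Re s \leq C\log|\Im s|$ converts the exponential growth into the polynomial bound $(\lambda_0/\lambda_\#)^{\Re s} \leq |\Im s|^{C\log(\lambda_0/\lambda_\#)}$, which is beaten by $|s|^{N+1}$. Hence all three pieces $E_i(s)$ tend to zero as $|s| \to \infty$ inside $\Omega_{\delta',C}$, so $f$ is non-vanishing outside some compact subset $K \subset \Omega_{\delta',C}$. Since $f$ is holomorphic on $\{\Re s > \delta\}$ and not identically zero (its leading term has $a^0_0 \neq 0$), it has only finitely many zeros on $K$.

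For the second assertion, the hypothesis $\lambda_\# = \lambda_0$ kills the exponential factor in $E_3$, leaving $|E_3(s)| = O(1/|s|^{N+1})$. Taking $\delta' > \max(\delta,0)$ large enough then forces $|E_1|+|E_2|+|E_3| < 1/2$ uniformly on $\{\Re s > \delta'\}$, so $f$ has no zero there at all. The essence of the dichotomy is exactly this trade-off: the order $N$ of the expansion and the admissible slope $C$ of the log-zone must be balanced against $\log(\lambda_0/\lambda_\#)$, and this balance is precisely what yields ``finitely many zeros'' in the general case but ``no zeros'' when $\lambda_\# = \lambda_0$.
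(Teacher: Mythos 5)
Your proposal is correct and follows essentially the same route as the paper: factor out the dominant term $a^0_0\lambda_0^{-s}$, bound the tail of $L_0$, the intermediate sum of the $L_n/s^n$, and the remainder separately, and use the logarithmic constraint in $\Omega_{\delta',C}$ together with a choice of $N$ large enough to tame the $(\lambda_0/\lambda_\#)^{\Re s}$ growth of the error. One small imprecision to note: $E_1$ does not tend to $0$ as $|s|\to\infty$ inside $\Omega_{\delta',C}$ (only as $\Re s\to\infty$), but since you have already made it small by taking $\delta'$ large, the conclusion is unaffected.
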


\begin{proof}
We can write
\begin{equation*}
L_0(s) = \frac{a^0_0}{\lambda_0^s} + \underset{:=\tilde{L}_0(s)}{\underbrace{\sum_{k=1}^\infty \frac{a^0_k}{\lambda_k^s}}}
\end{equation*}
There is a $\delta' > \delta$ such that whenever $\Re s > \delta'$,
\begin{equation*}
|\tilde{L}_0(s)| \leq \frac{1}{3} \left| \frac{a^0_0}{\lambda_0^s}\right|
\end{equation*}
Take $N>0$. Then for $|s|$ big enough --- say $|s| > C_N$ --- and for $\Re s > \delta'$, 
\begin{equation*}
\frac{1}{|s|}| L_1(s) | + \dots + \frac{1}{|s|^{N-1}}|L_{N-1}(s)| \leq \frac{1}{3} \left| \frac{a^0_0}{\lambda_0^s}\right|.
\end{equation*}
We also find that
\begin{equation*}
  \Re s \leq \frac{N}{\log(\lambda_0/\lambda_\#)} \log |\Im s| + \mathcal{O}(1) \text{ and } |s| > C_N \implies \left| \frac{C'_N}{s^N\lambda_\#^s}\right| < \frac{1}{3}  \left| \frac{a^0_0}{\lambda_0^s}\right|.
\end{equation*}
When $\lambda_0 = \lambda_\#$, the condition on $\Re s$ is void. When $\lambda_0 > \lambda_\#$, by taking $N \sim C \log(\lambda_0 /\lambda_\#)$, we find that the zeroes of $f$ in the region described in the lemma are actually in a bounded region of the plane. Since $f$ is holomorphic, they have to be in finite number.
\end{proof}

We give another lemma:
\begin{lemma}\label{lemma:zeroes_Dirichlet_function_2}
Let $f\in \mathscr{D}(\delta, \lambda, \lambda_\#)$ be such that $L_0 \in \mathscr{D}^1(\delta, \lambda)$ and $L_1 \in \mathscr{D}^0(\delta, \lambda)$. Let 
\begin{equation*}
\tilde{f}(s) := \frac{a^0_1}{\lambda_1^s} + \frac{a^1_0}{s \lambda_0^s}
\end{equation*}
There is a $\delta' > \delta$ such that for any constant $C>0$, there is a mapping $W$ from the zeroes of $\tilde{f}$ in $\Omega_{\delta',C}$ to the zeroes of $f$ in $\Omega_{\delta',C}$, that only misses a finite number of zeroes of $f$, and such that
\begin{equation*}
W(s) - s  \underset{|s| \to \infty}{\to} 0.
\end{equation*}
\end{lemma}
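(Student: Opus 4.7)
The plan is to apply Rouché's theorem in small disks around the zeroes of $\tilde f$, viewing $f$ as a perturbation of $\tilde f$ on $\Omega_{\delta',C}$. First I would locate the zeroes of $\tilde f$. With $\mu := \log(\lambda_1/\lambda_0) > 0$, the equation $\tilde f(s) = 0$ rewrites as $s(\lambda_0/\lambda_1)^s = -a^1_0/a^0_1$, which upon the substitution $w = -\mu s$ becomes a Lambert-$W$-type equation $w e^w = \mu a^1_0/a^0_1$. Its solutions form a discrete sequence $(s_k)_{k\in\Z}$ with
\begin{equation*}
\Re s_k \;=\; \frac{\log|\Im s_k|}{\mu} + \mathcal{O}(\log\log|\Im s_k|), \qquad \Im s_{k+1} - \Im s_k \;\longrightarrow\; \frac{2\pi}{\mu},
\end{equation*}
and each zero is simple, with $\tilde f'(s_k) = a^1_0 s_k^{-1}\lambda_0^{-s_k}(\mu - s_k^{-1})$. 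The conclusion is non-trivial only when infinitely many $s_k$ lie in $\Omega_{\delta',C}$, which I focus on.

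Second, I would run the Rouché estimate on a disk $D(s_k,r_k)$ with $r_k$ smaller than the asymptotic spacing $2\pi/\mu$. Taylor expansion at $s_k$ gives $|\tilde f(s)| \geq c_1 r_k |s_k|^{-1}\lambda_0^{-\Re s_k}$ on $\partial D(s_k,r_k)$ for $|s_k|$ large. On the error side, since $L_0\in \mathscr{D}^1(\delta,\lambda)$ and $L_1\in \mathscr{D}^0(\delta,\lambda)$, the Dirichlet series $L_0 - a^0_1/\lambda_1^s$ and $L_1 - a^1_0/\lambda_0^s$ start respectively at $\lambda_2$ and $\lambda_1$; combined with the defining $\mathcal{O}(s^{-2}\lambda_\#^{-s})$ remainder of $\mathscr{D}(\delta,\lambda,\lambda_\#)$, this yields
\begin{equation*}
|f-\tilde f|(s) \;\leq\; \frac{C}{\lambda_2^{\Re s}} + \frac{C}{|s|\lambda_1^{\Re s}} + \frac{C}{|s|^2\lambda_\#^{\Re s}}, \qquad \Re s > \delta'.
\end{equation*}
Using the relation $|s_k| \asymp (\lambda_1/\lambda_0)^{\Re s_k}$ coming from the Lambert-$W$ equation, each of these terms is $o(r_k|s_k|^{-1}\lambda_0^{-\Re s_k})$ for a suitable intermediate choice such as $r_k = |s_k|^{-\eta}$ with a small $\eta > 0$, once $\delta'$ has been taken large enough. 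Rouché then produces exactly one zero of $f$ in $D(s_k,r_k)$, which I define to be $W(s_k)$; since $r_k \to 0$, $W(s_k) - s_k \to 0$.

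Finally, to show that $W$ omits only finitely many zeroes of $f$ in $\Omega_{\delta',C}$, I would establish a uniform minimum-modulus bound $|\tilde f(s)| \geq c/(|s|\lambda_0^{\Re s})$ on $\Omega_{\delta',C}\setminus \bigcup_k D(s_k,r_k)$ outside a bounded region, again exploiting the explicit Lambert-$W$ parametrisation; comparison with the same error estimate then forbids zeroes of $f$ there, leaving at most a bounded (hence finite) set of exceptional zeroes of $f$. The main obstacle will be controlling the third error term: its ratio to $|\tilde f|$ on $\partial D(s_k,r_k)$ is of order $(\lambda_0/\lambda_\#)^{\Re s_k}/(r_k|s_k|)$, which along the logarithmic curve followed by the $s_k$ reduces to $|s_k|^{\log(\lambda_0/\lambda_\#)/\mu - 1}/r_k$; dominance demands a quantitative comparison between $\lambda_\#$ and $\lambda_0$ relative to the gap $\lambda_1/\lambda_0$, and it is precisely this constraint that fixes $\delta'$ and forces the ``only misses a finite number'' proviso in the statement.
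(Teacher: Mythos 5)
Your overall strategy — Rouch\'e on shrinking disks around the zeroes of $\tilde f$, plus a minimum-modulus estimate off those disks to rule out extra zeroes of $f$ — is the same skeleton the paper uses, and your three-term bound on $|f-\tilde f|$ comes from truncating the defining expansion at $n=1$. The genuine gap is precisely the ``main obstacle'' you flag but misdiagnose: when $\lambda_\# < \lambda_0$, the fixed-$n$ remainder $C|s|^{-2}\lambda_\#^{-\Re s}$ can dominate $|\tilde f|\asymp\lambda_1^{-\Re s}$ along the logarithmic curve of zeroes, because $(\lambda_1/\lambda_\#)^{\Re s}/|s|^2 = |s|^{\log(\lambda_1/\lambda_\#)/\log(\lambda_1/\lambda_0)-2}$ and the exponent exceeds $0$ whenever $\lambda_\#$ is small enough relative to $\lambda_0,\lambda_1$. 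Enlarging $\delta'$ does not help — the problem worsens as $\Re s$ grows — and there is no ``quantitative comparison between $\lambda_\#$ and $\lambda_0$'' hidden in the hypotheses; the lemma is stated unconditionally, and the finite exceptional set is simply the bounded region (small $|s|$), not a device to absorb this term.

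The fix you are missing is that membership in $\mathscr{D}(\delta,\lambda,\lambda_\#)$ gives an asymptotic expansion of \emph{every} order $n$, and the paper exploits this: once the constant $C$ in $\Omega_{\delta',C}$ is fixed, one chooses $n$ so large that on the right-hand boundary curve $\Re s = n\log|\Im s|\,(\log(\lambda_0/\lambda_\#))^{-1}$ the remainder $|s|^{-n-1}\lambda_\#^{-\Re s}$ falls below $|\tilde f|/|s|$, while the middle terms $L_2/s^2+\dots+L_n/s^n$ contribute only $\mathcal{O}(|s|^{-2}\lambda_0^{-\Re s})$, which is harmless on that curve. Without letting $n$ grow with $C$, your disk estimates and the minimum-modulus bound outside the disks both fail for $\Re s$ of size comparable to $C\log|\Im s|$, so the argument as written does not close. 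A secondary point: the paper separates the log-region into boxes using horizontal lines where $|\tilde f|$ is uniformly large and runs Rouch\'e box-by-box, rather than relying on a global minimum-modulus bound on the complement of the disks — either variant works once the $n$-dependence is put in, but you should be aware yours is a slightly different finishing move.
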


A picture gives a better idea of the content of this abstract lemma. It is elementary to observe that the zeroes of $\tilde{f}$ are asymptotically distributed along a vertical $\log$ line $\Re s = a \log |\Im s| + b$, at intervals of lengths $\sim 2\pi  (\log \lambda_1/\lambda_0)^{-1}$.
\begin{figure}[h]
\centering
\def\svgwidth{15em}
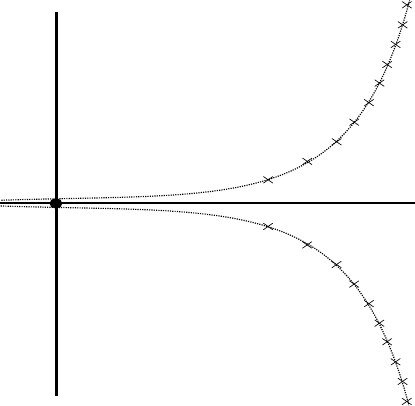
\caption{\label{fig:zeroes_log_lines} The zeroes of $\tilde{f}$.}
\end{figure}

\begin{remark} Instead of assuming $a^0_0 = 0$ and $a^1_0, a^0_1 \neq 0$, we could have assumed a finite number of explicit cancellations and non-cancellations. In that case, it is likely that one could prove a similar lemma, with $f$ having zeroes close to a finite (arbitrary) number of $\log$ lines instead of only one line. However, this leads to tedious computations that we did not carry out entirely.
\end{remark}

\begin{proof}

This is an application of Rouch\'e's Theorem. We aim to give a good bound for $| f- \tilde{f}|$ on appropriate contours. To this end, we decompose
\begin{align}
|f - \tilde{f}| &\leq \left|L_0 - \frac{a^0_1}{\lambda_1^s}\right| + \frac{1}{|s|}\left| L_1 - \frac{a^1_0}{\lambda_0^s}\right| + \frac{1}{|s|^2} \left| L_2 + \cdots + \frac{1}{s^{n-2}} L_n \right| + \mathcal{O}\left(\frac{1}{|s|^{n-1}\lambda_\#^{\Re s}} \right)  \\
\intertext{For some $\delta' > \delta$, and for $\Re s > \delta'$,this gives}
		&\leq C \left(\frac{1}{\lambda_2^{\Re s}} + \frac{1}{|s|\lambda_1^{\Re s}} + \frac{1}{|s|^2\lambda_0^{\Re s}} + \frac{1}{|s|^{n+1}\lambda_\#^{\Re s}}\right) \label{eq:bound_rouche_1}
\end{align}
where $C>0$ is a constant. We can always choose $\delta'$ big enough so that 
\begin{equation*}
\frac{C}{\lambda_2^{\Re s}} \leq \frac{1}{2} |\tilde{f}|
\end{equation*}
for $\Re s = \delta'$ and $|\Im s|$ big enough. Then, on the vertical line $\Re s = \delta'$, for $|\Im s|$ big enough, $|f-\tilde{f}| < |\tilde{f}|$.

Now, on the line $\Re s = n \log |\Im s| (\log \lambda_0 /\lambda_\#)^{-1}$, the 3 first terms of the RHS of equation \eqref{eq:bound_rouche_1} are very small in comparison to $\tilde{f}$. We can check that the last one is $\mathcal{O}(1/s)|\tilde{f}$ to see that on that line also, $|f - \tilde{f} | < |\tilde{f}|$.

Now, we observe that
\begin{equation*}
\begin{split}
|\tilde{f}| = &\left|\frac{a^0_1}{\lambda_1^s}\right| + \left|\frac{a^1_0}{s \lambda_0^s}\right| \\
		&\iff \Im s \log\frac{\lambda_0}{\lambda_1} + \arg s + \arg \frac{a_1^0}{a_0^1} \in 2\pi \Z.
\end{split}
\end{equation*}

Since in the region $\Omega =\{\delta' \leq \Re s \leq n \log |\Im s| (\log \lambda_0 /\lambda_\#)^{-1}\}$, $\arg s = \pi /2 + \mathcal{O}(\log |s| /|s|)$, we deduce that there is a constant $C>0$ such that, 
\begin{equation*}
2|\tilde{f}| \geq \left|\frac{a^0_1}{\lambda_1^s}\right| + \left|\frac{a^1_0}{s \lambda_0^s}\right|
\end{equation*}
on each line $\Im s = C+ 2\pi k (\log\frac{\lambda_1}{\lambda_0})^{-1}$, $k\in \N$, in $\Omega$. One can check that this implies that on each of those horizontal lines, $|f - \tilde{f} | < |\tilde{f}|$.

Now, the zeroes of $\tilde{f}$ are located on the line
\begin{equation*}
a^1_0 |s| \lambda_1^{\Re s} = \lambda_0^{\Re s} a^0_1.
\end{equation*}
At distance $\mathcal{O}(1)$ of that line, one can see that the RHS in \eqref{eq:bound_rouche_1} is bounded by
\begin{equation*}
\mathcal{O}(|s|^{-\alpha}) \left|\frac{a^0_1}{\lambda_1^s} \right|
\end{equation*}
for some $\alpha >0$. The proof of the lemma will be complete if we can find some circles $C_n$ around the zeroes $s_n$ of $\tilde{f}$, whose radii $r_n$ shrink, but such that on $C_n$, 
\begin{equation*}
|\tilde{f}| >> |s_n|^{-\alpha} \left|\frac{a^0_1}{\lambda_1^{s_n}} \right|.
\end{equation*}
Actually, this kind of estimate is true on the circles $C_n$ centered at $s_n$ of radius $r_n$, as long $r_n \to 0$ with $r_n >> |s_n|^{-\alpha}$.
\end{proof}

Now,
\begin{lemma}\label{lemma:recap}
There are different situations.
\begin{enumerate}
	\item When there is only $1$ cusp, we always have $L_0 \in \mathscr{D}^0(\delta, \lambda)$.
	\item In general, the set of $g\in \mathcal{G}(M)$ such that $L_0\in  \mathscr{D}^0(\delta, \lambda)$ is open and dense in $C^2$ topology.
	\item There are examples of \emph{hyperbolic} cusp surfaces with $L_0 \in  \mathscr{D}^1(\delta, \lambda)$.
	\item There are examples of \emph{hyperbolic} cusp surfaces $M$ that satisfy the following. First, $L_0 \in \mathscr{D}^0(\delta, \lambda)$. Second, there is an open set $U \subset \subset M$ such that for any cusps $Z_i$, $Z_j$, $d(U, Z_i) + d(U, Z_j) \geq \mathcal{T}^0_{ij} + \log a_i + \log a_j$. Then $\lambda_\# = \lambda$ for all the metrics $g\in \mathcal{G}_U(M)$ (the metrics with variable curvature supported in $U$).
\end{enumerate}
\end{lemma}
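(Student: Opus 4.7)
The common strategy is to identify the leading Dirichlet coefficients of $\varphi$ via the parametrix of Theorem \ref{theorem:parametrix_varphi}, expanding $\varphi = \det \phi$ by the Leibniz formula and exploiting that each single-geodesic coefficient $a^0(g, [c]) = \exp\{\int_c V_0\}$ is strictly positive and $C^0$ in the $C^2$ topology on $g$ by Lemma \ref{lemma:regularity_first_asymptotics}.

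For (1), $\kappa = 1$ gives $\varphi = \phi_{11}$, and the parametrix shows that the leading coefficient of $L_0$ equals $\pi^{d/2}\sum_c a^0(g, [c]) > 0$, summed over the finitely many $c \in \pi_1^{11}(M)$ realising $\mathcal{T}^0_{11}$; since every term is strictly positive, $L_0 \in \mathscr{D}^0(\delta, \lambda)$. For (2), the same expansion yields the signed sum
\begin{equation*}
a^0_0(g) = \pi^{\kappa d/2}\sum_\sigma \varepsilon(\sigma)\sum_{(c_i)\in\mathcal{C}(\sigma)}\prod_{i=1}^\kappa a^0(g,[c_i])
\end{equation*}
over geodesic cycles with total sojourn time equal to $\mathcal{T}^0$; this is $C^0$ in $g$, so $\{a^0_0 \neq 0\}$ is open. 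For density I would apply Lemma \ref{lemma:openness-coefficients} to the finitely many classes $[c]$ appearing in this sum: choosing one cycle $(c_i^\star)$ that contributes a nonzero product and perturbing only $a^0(g, [c_1^\star])$ while fixing the others, $a^0_0$ becomes affine non-constant in that single coordinate, hence in every $C^\infty$-neighbourhood (and a fortiori every $C^2$-neighbourhood) of any $g$ one finds $g'$ with $a^0_0(g') \neq 0$. For (4), take any hyperbolic $g_0$ supplied by (1) or (2) with $L_0 \in \mathscr{D}^0$, and any $U \subset\subset M_0$ satisfying the stated distance bound. For $g \in \mathcal{G}_U(M)$ the curvature is constant outside $U$, so from \eqref{eq:def-b-i} one computes $-\log b_i = d(U, Z_i) - \log a_i$; the hypothesis rewrites as $-\log(b_i b_j) \geq \mathcal{T}^0_{ij}$, forcing $\mathcal{T}^\#_{ij} = \min(-\log b_i b_j, \mathcal{T}^0_{ij}) = \mathcal{T}^0_{ij}$ for every pair, so $\mathcal{T}^\# = \mathcal{T}^0$ and $\lambda_\# = \lambda_0$. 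Openness from (2) ensures $L_0 \in \mathscr{D}^0$ is preserved under the perturbation.

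The technical heart is (3). Here I would produce a constant-curvature $2$-cusped surface $\Hh/\Gamma$ with $\Gamma$ a suitable congruence subgroup (e.g.\ $\Gamma_0(p)$ for a well-chosen prime $p$) for which Selberg's factorisation \eqref{eq:Selberg_Dirichlet_series} expresses $\phi(s)$ explicitly in terms of classical Dirichlet $L$-functions. The cusp-swap involution forces $\phi_{11} = \phi_{22}$ and $\phi_{12} = \phi_{21}$, hence $\varphi = (\phi_{11}-\phi_{12})(\phi_{11}+\phi_{12})$. Choosing $p$ so that the shortest contributions from $\sigma = \mathrm{id}$ and $\sigma = (12)$ coincide in length and magnitude while appearing with opposite signs, the leading coefficient of $L_0$ cancels while $a^0_1$ and $a^1_0$ survive; combined with Lemma \ref{lemma:zeroes_Dirichlet_function_2} this places the zeros of $\varphi$ asymptotically along the curve $s e^{-sT} = C_0$ of part \eqref{metrics-two-cusp-exceptional} of Theorem \ref{theorem:main}. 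The main obstacle is precisely this explicit closed-form verification of the cancellation: it forces the choice of an arithmetic surface where the scattering coefficients are computable by hand, and the delicate book-keeping on the Selberg $L$-series has to be carried out without any soft argument replacing it.
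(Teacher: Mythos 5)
Your parts (1), (2) and the $\lambda_\# = \lambda_0$ computation in (4) are sound, but the two places where the lemma genuinely needs a \emph{construction} — (3) and the existence of the set $U$ in (4) — are left open in your proposal, which is exactly where the paper does the concrete work.

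For (2), your route differs from the paper's: you fix all sojourn times and perturb a single Jacobian coefficient $a^0(g,[c_1^\star])$ to destroy the cancellation in the signed sum, whereas the paper uses Lemma \ref{lemma:openness-coefficients} to perturb the sojourn times themselves so that the minimal sojourn cycle $\mathcal{T}^0$ becomes \emph{simple}; with a single cycle achieving the minimum, $a^0_0$ is (up to sign) a product of strictly positive quantities and cannot vanish — no cancellation analysis is needed. Your version also works, but it requires the extra observation that $a^0_0$ is genuinely non-constant in $a^0(g,[c_1^\star])$: when $c_1^\star$ belongs to several cycles at level $\mathcal{T}^0$ (which happens, e.g., for a diagonal geodesic in $\mathcal{SG}_{ij}$ appearing in both slots of the transposition cycle for $\kappa=2$), the linear coefficient is itself a signed sum that could in principle vanish, and one has to iterate over other geodesics or argue positivity. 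This can be repaired but is not as clean as breaking the degeneracy of $\mathcal{T}^0$.

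The real gaps are in (3) and (4). For (3) you sketch an arithmetic route via $\Gamma_0(p)$ and Selberg's factorisation, but you explicitly do not carry out the verification, and there is a structural problem upstream: $\Gamma_0(p)$ contains elliptic elements, so $\Hh/\Gamma_0(p)$ is an orbifold, not a cusp surface in the sense of this paper (torsion-free $\Gamma$ acting freely). Passing to a torsion-free congruence subgroup changes the cusp count and the cusp-swap symmetry you are relying on. The paper instead constructs an explicit two-punctured torus by gluing four congruent pentagons (section \ref{section:example_2_cusps}); by the built-in symmetry exchanging the two cusps, the three shortest scattered geodesics $c_0^{11}$, $c_0^{22}$, $c_0^{12}$ all have sojourn time $-2\log y_0$, and since on a hyperbolic surface $a^0(c)=1$ for every $c$, the leading coefficient is $a^0(c_0^{11})a^0(c_0^{22}) - a^0(c_0^{12})^2 = 1-1=0$, while a short direct argument controls the second-smallest sojourn cycle. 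No arithmetic L-function bookkeeping is needed. For (4), your derivation $-\log b_i = d(U,Z_i) - \log a_i$ and the resulting implication $\mathcal{T}^\# = \mathcal{T}^0$ is correct, but the lemma is an existence statement and you take "any $U$ satisfying the stated distance bound" as given. For a generic hyperbolic surface there may be no nonempty such $U$: the condition requires $U$ to sit farther from the cusps than the midpoints of the shortest scattered geodesics. The paper exhibits a once-punctured torus built from two symmetric pentagons (section \ref{section:example_1_cusp}) for which $\mathcal{T}^0_{11} + 2\log a_1 = 0$, so the distance bound degenerates to $d(U, Z_1) \geq 0$, making $U = \{y < 1\}$ a valid nonempty choice. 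Without some such explicit construction, (4) is not proved.
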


Lemmas \ref{lemma:zeroes_Dirichlet_function_1}, \ref{lemma:zeroes_Dirichlet_function_2} and \ref{lemma:recap} can be combined to prove theorem \ref{theorem:main}. Let us first prove lemma \ref{lemma:recap}.

\begin{proof}
When $\kappa = 1$, $\varphi = \phi_{11}$. From lemma \ref{lemma:asymptotic_expansion_scattering}, we see that $a^0_0$ is a sum of positive terms over the set of scattered geodesics whose sojourn time is $\mathcal{T}^0_{11}$, hence it cannot vanish.

In the general case, the openness property of lemma \ref{lemma:openness-coefficients} shows that for an open and dense set of $g\in \mathcal{G}(M)$ for the $C^2$ topology, the smallest element $\mathcal{T}^0$ of the set of sojourn cycles is simple. That implies that $a^0_0 \neq 0$.

For the third part of the lemma, an example will be constructed in section \ref{section:example_2_cusps}.

For the last part, an example will be given in section \ref{section:example_1_cusp}. The conclusion $\lambda_\# = \lambda_0$ is a consequence of the discussion just before theorem \ref{theorem:parametrix_varphi}
\end{proof}

\begin{proof}[Proof of theorem \ref{theorem:main}]
We can list the cases
\begin{enumerate}
	\item Consider the hyperbolic surface described in lemma \ref{lemma:recap}(4). For such a surface, for all $g\in \mathcal{G}_U(M)$, we have $L_0 \in \mathscr{D}^0(\delta, \lambda)$, and $\lambda_\# = \lambda_0$. We can apply the special case of lemma \ref{lemma:zeroes_Dirichlet_function_1}, to prove part (I).
	\item For all manifolds with one cusp only, $L_0 \in \mathscr{D}^0(\delta, \lambda)$ so we can apply the general case of lemma \ref{lemma:zeroes_Dirichlet_function_1}.
	\item When there is more than one cusp, case (2) of lemma \ref{lemma:recap} and lemma \ref{lemma:zeroes_Dirichlet_function_1} lead to part (II) of theorem \ref{theorem:main}.
	\item The example in case (3) of lemma \ref{lemma:recap} can be perturbed, preserving the condition $L_0 \in \mathscr{D}^1(\delta, \lambda)$, and with $L_1 \in \mathscr{D}^0(\delta, \lambda)$, according to lemma \ref{lemma:openness-coefficients}. We can then apply lemma \ref{lemma:zeroes_Dirichlet_function_2} to prove part (III).
	\item Finally, we can adapt the proof of lemma \ref{lemma:zeroes_Dirichlet_function_1} to show that whenever at least one $L_i$ is not the zero function, the conclusions of lemma \ref{lemma:zeroes_Dirichlet_function_1} apply, if we replace "for all constant $C>0$" by "for some constant $C>0$". This proves part (IV).
\end{enumerate}
\end{proof}

\subsection{Two examples}

In this last section, we construct explicit hyperbolic examples that satisfy the conditions given in lemma \ref{lemma:recap}.

	\subsubsection{An example with one cusp}\label{section:example_1_cusp}

Here, we construct a surface with one cusp, such that there are parts of the surface that are \emph{far} from the cusp, in the appropriate sense.

\begin{figure}[h]
\centering
\def\svgwidth{15em}
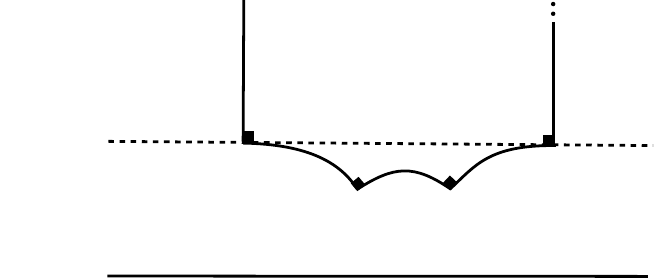
\caption{\label{fig:symmetric_pentagon} symmetric pentagon with an ideal vertex.}
\end{figure}

Topologically, we are looking at the most simple cusp surface: a punctured torus. It can be obtained explicitely by glueing two hyperbolic pentagons. Consider two copies of the pentagon in figure \ref{fig:symmetric_pentagon}. Glueing sides $A\leftrightarrow B'$, $B \leftrightarrow A'$, $D \leftrightarrow D'$, $C \leftrightarrow E$ and $C' \leftrightarrow E'$, we obtain a punctured torus, that we call $(M,g)$.

The scattered geodesic $c_0$ with the smallest sojourn time corresponds to the sides $AB'$ and $BA'$. Its sojourn time is $0$, i.e $\mathcal{T}^0=0$. However, the set $U$ of points that are strictly below the line $\{y=1\}$ is non empty (and open). This is the example in 4) in lemma \ref{lemma:recap}

	\subsubsection{An example with 2 cusps}\label{section:example_2_cusps}

Now, we aim to construct an example of surface with two cusps $(M,g)$ such that $L_0 \in \mathscr{D}^1$. We consider a two-punctured torus.

As in the previous example, we will glue pentagons. Only this time we glue 4 identical pentagons $a,b,c,d$, and they will not be symmetrical --- see figure \ref{fig:pentagon}.
\begin{figure}[h]
\centering
\def\svgwidth{25em}
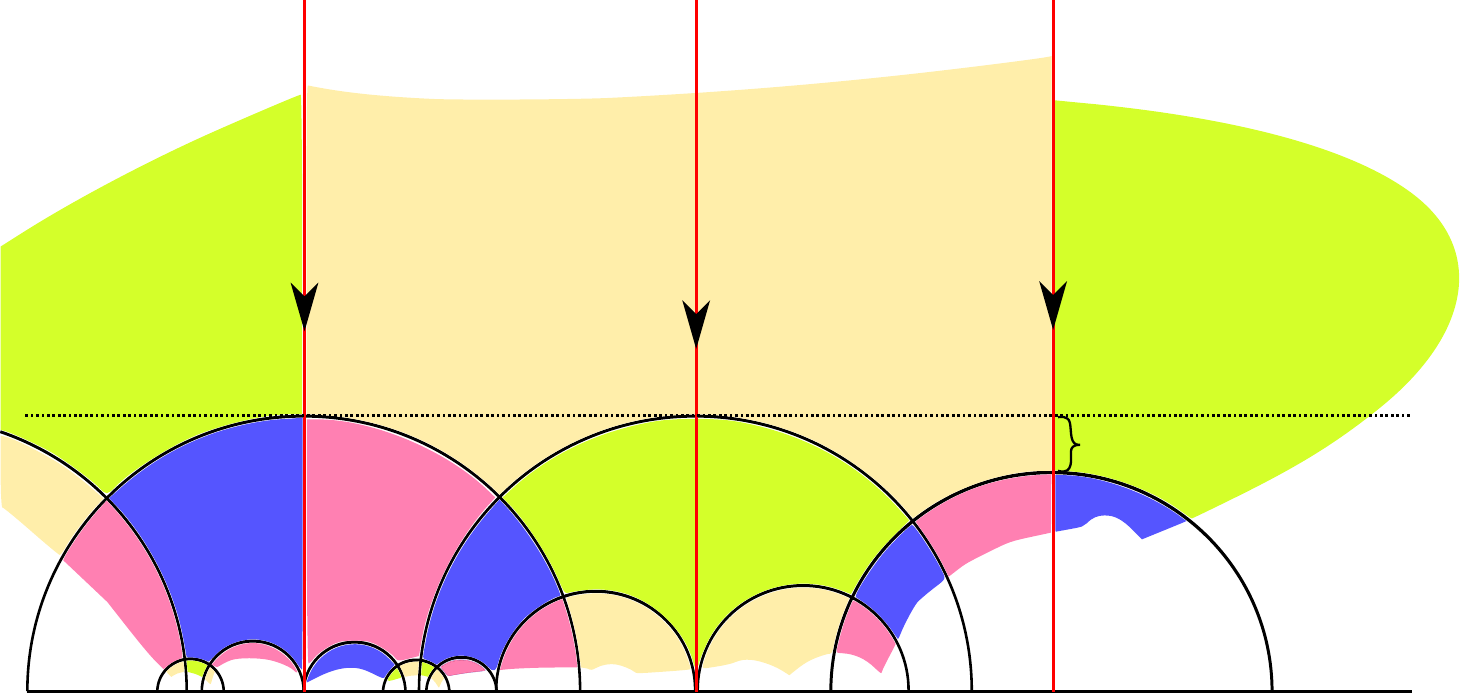
\caption{\label{fig:pentagon} A tiling of the hyperbolic plane with pentagons.}
\end{figure}

The cusp corresponding to pentagons $a$ and $b$ will be called cusp $Z_1$, and the other one, corresponding to pentagons $c$ and $d$ will be cusp $Z_2$. We obtain a surface $(M, g_\ell)$ with two cusps, depending on the hyperbolic length $\ell$. To ensure the normalization condition that the volume of a projected horosphere at height $y$ is $y^{-1}$, we have to take
\begin{equation}
y_0 = \frac{1}{2} \frac{1}{\sqrt{2} + \sqrt{1+ e^{-2\ell}}}.
\end{equation}

We number the geodesics from $i$ to $j$ ($i=1,2$, and $j=1,2$) by their sojourn time $c^{ij}_n$ with $\mathcal{T}(c^{ij}_0) \leq \mathcal{T}(c^{ij}_1) \leq \dots$. Since a geodesic coming from a cusp has to go under the $y=y_0$ line to exit a pentagon, we see that actually, $c^{11}_0$ and $c^{12}_0$ are the geodesics designated in figure \ref{fig:pentagon}. Actually, we also get that :
\begin{equation*}
-2 \log y_0 = \mathcal{T}(c^{11}_0) = \mathcal{T}(c^{12}_0) < \mathcal{T}(c^{ij}_1), \quad i,j=1,2.
\end{equation*}

This proves that $a^0_0=0$. Now, to obtain that $L_0\in \mathscr{D}^1$, we need to show that the second shortest sojourn time is $2\ell - 2 \log y_0$, and that it is \emph{simple}. That is to say, $c^{12}_1$ really \emph{is} the curve drawn in figure \ref{fig:pentagon}, and the only other curves with sojourn time $\leq 2\ell - 2 \log y_0$ are $c^{11}_0$ and $c^{12}_0$.

In order to prove this, draw a line at height $y_0 e^{-2\ell}$. A scattered geodesic coming from cusp $Z_1$ can be lifted to $\Hh^2$ as a curve coming from $\infty$ in the pentagon $a$, that stays in the same pentagon as $ y \leq y_0 e^{-2\ell}$. When $\ell$ is small enough, there are only 3 geodesics that satisfy such a property, and they are drawn on figure \ref{fig:pentagon}.

\appendix

\section{Regularity of Horospheres for some Hadamard manifolds}\label{appendix:horocycles}
In this appendix, we recall some results on the regularity of stable and unstable foliations.

\begin{lemma}\label{lemma:Regularity_horocycles}
Let $ N $ be a simply connected manifold of dimension $d+1$, with sectio nal curvature $-|K_{max}| < K < - |K_{min}| < 0$. Assume additionally that all the covariant derivatives of the curvature tensor $R$ of $ N $ are bounded. For a point $\xi\in S^\ast  N $, we define $W^s(\xi)$ as $\{ \xi' \in S^\ast M,\ d(\pi \varphi_t \xi', \pi \varphi_t \xi) \to 0 \text{ as } t \to +\infty \}$, and similarly $W^u(\xi)$. Those are $\mathscr{C}^\infty$ submanifolds of $T S^\ast  N $, \emph{uniformly} in $\xi$; they form a continuous foliation of $T S^\ast  N $, tangent respectively to $E^s$ and $E^u$.
\end{lemma}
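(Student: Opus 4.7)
The approach has two phases: identify $W^u(\xi)$ (resp. $W^s(\xi)$) with the graph of $dG_{p^-(\xi)}$ over a horosphere, then prove uniform $\mathscr{C}^\infty$-regularity of Busemann functions via the Riccati equation for their Hessian.

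\emph{Reduction to Busemann functions.} A standard convexity argument in Hadamard spaces shows that two geodesics $t \mapsto \pi\varphi_t\xi$ and $t \mapsto \pi\varphi_t\xi'$ are forward-asymptotic if and only if they share the same positive endpoint in $\partial_\infty N$, and symmetrically for backward asymptotic geodesics with $p^-$. Combined with the identification $W^{u0}(p) = \mathrm{graph}(dG_p)$ recalled in the paper, this gives
\[
W^u(\xi) = \bigl\{(x, dG_{p^-(\xi)}(x)) : x \in H(p^-(\xi), e^{-G_{p^-(\xi)}(\pi\xi)})\bigr\},
\]
and likewise for $W^s$. Since $|\nabla G_p| \equiv 1$, each horosphere is a regular level set of $G_p$; hence the lemma reduces to showing that every Busemann function $G_p$ is of class $\mathscr{C}^\infty$ with bounds $\|\nabla^k G_p\|_\infty \leq C_k$ depending only on the pinching constants and on $\sup_{j \leq k-2}\|\nabla^j R\|_\infty$.

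\emph{Riccati equation and bootstrap.} Fix $p \in \partial_\infty N$ and a geodesic $\gamma(t) := \varphi^p_t(x_0)$. Parallel-transporting the self-adjoint endomorphism $U(t) := \nabla^2 G_p|_{\gamma(t)}$ on $\dot\gamma(t)^\perp$, the Jacobi equation gives the matrix Riccati equation
\[
\dot U(t) + U(t)^2 + R(t) = 0, \qquad R(t) := R(\cdot,\dot\gamma(t))\dot\gamma(t),
\]
and the horospherical solution is characterized as the unique solution of this equation defined for all $t \in \mathbb{R}$; comparison with the two constant-curvature Riccati equations (Heintze--Im Hof) yields $\|U(t)\| \leq C_0$ uniformly in $p$, $\gamma$, $t$. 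For $k \geq 3$, I would covariantly differentiate the Riccati equation $k-2$ times in directions tangent to the horosphere. One obtains for $V_k := \nabla^{k-2} U$ a linear inhomogeneous ODE along $\gamma$ whose coefficients are linear in $U$ and whose source is a polynomial expression in $\nabla^j U$ ($j < k-2$) and in $\nabla^j R$ ($j \leq k-2$). By induction on $k$, the source is uniformly bounded, and Gronwall applied on the correct (stable) half-line for the linearized equation yields $\|V_k\| \leq C_k$ uniformly. Continuity of the foliation $\xi \mapsto W^u(\xi)$ then follows from continuity of $\xi \mapsto p^-(\xi)$ into the visual boundary together with $\mathscr{C}^1_{\mathrm{loc}}$-continuity of $p \mapsto G_p$, both of which are classical.

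The main obstacle will be making the bootstrap rigorous: commuting covariant differentiation with the flow $\varphi^p_t$ introduces curvature correction terms that must be absorbed into the inductive hypothesis, and one has to verify that the linearized Riccati equations are integrated on the correct half-line so that the Gronwall estimate picks up no uncontrolled exponential factor. The hypothesis that all covariant derivatives of $R$ are bounded is exactly what keeps the source terms uniformly controlled so that the induction closes; once this is handled, the uniform $\mathscr{C}^\infty$-regularity of the leaves, and hence the statement of the lemma, follows immediately from the reduction.
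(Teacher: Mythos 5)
Your route is genuinely different from the paper's. The paper runs a standard graph-transform argument: for a fixed $\xi$ it builds the Banach space $\mathscr{H}_\nu$ of sequences along the orbit $\xi_k=\varphi_{kt}(\xi)$ that decay like $e^{-\nu k}$, writes the condition of being a flow-line of $\varphi_t$ as $\Psi=0$, verifies $\partial_r\Psi(0)$ is a linear isomorphism onto the space of sequences with vanishing first term, and invokes the Banach implicit function theorem; the uniform $\mathscr{C}^\infty$ bounds then come directly from the uniform $\mathscr{C}^k$ bounds on the time-$t$ map $\varphi_t$ (cited from \cite{Bonthonneau-2}). You instead propose to bound all covariant derivatives of the Busemann function $G_p$ by bootstrapping the matrix Riccati equation satisfied by $\nabla^2 G_p$ and then recover $W^{u}(\xi)$ as the restriction of $\operatorname{graph}(dG_p)$ to a horosphere. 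This is a legitimate alternative and in some ways more explicit: it gives directly the Hessian of horospheres and bounds expressed in the pinching constants and $\|\nabla^j R\|$, whereas the Banach IFT argument is softer and needs the auxiliary input of $\mathscr{C}^k$ bounds on the flow.

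However, the crucial step — making the bootstrap close — is where the work lies and is only gestured at. Differentiating the Riccati equation transversally to the flow does not commute with transport along $\gamma$: the commutator brings in the first-order variation of $\gamma$ with respect to its basepoint, i.e.\ an unstable Jacobi field $J(t)$ with $\|J(t)\|\sim e^{\lambda t}$ forward and $\sim e^{\lambda t}$ decaying backward, and at order $k$ the source of the linearized ODE involves products of $k$-many such factors together with $\nabla^j R$. So the source is \emph{not} "uniformly bounded" along the orbit as stated; what saves the argument is that one integrates from $t=-\infty$ forward, where the Jacobi factors decay and the homogeneous propagator $\Phi(t,s)$ of $\dot V+UV+VU=0$ contracts forward since $U\geq K_{\min}\mathbf{1}>0$, so that the integral converges at $t=0$. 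The phrase "Gronwall on the correct (stable) half-line" is imprecise: the point is the joint decay of source (backward) and propagator (forward), not a stable/unstable dichotomy, and the inductive bookkeeping needs to be written out. Two further points deserve a sentence each: (i) the paper presents the smoothness of $G_p$ as a \emph{consequence} of this lemma, so your reduction avoids circularity only because you re-derive that smoothness independently via Riccati — fine, but say so; (ii) the statement also asserts $T_\xi W^u(\xi)=E^u(\xi)$ and that the leaves form a genuine (H\"older-continuous) foliation, neither of which your sketch addresses.
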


\begin{proof}
We just check that the proof of the compact case also works for us.

Let $\xi \in S^\ast  N $. Take $\nu >0$, $t>0$ and $0< \epsilon < t$. For $k\geq 0$, let $\xi_k = \varphi_{kt}(\xi)$. Using the exponential charts for the Sasaki metric on $S^\ast  N $, we can conjugate $\varphi_t$ to diffeomorphisms from $T_{\xi_k}S^\ast  N $ to $T_{\xi_{k+1}}S^\ast  N $ that map $0$ to $0$. We still refer to those as $\varphi_t$. Let
\begin{equation*}
\mathscr{H}_{\nu} := \{ (z_k)\ |\ z_k \in T_{\xi_k} S^\ast  N ,\ \limsup \| z_k \| e^{\nu k} < \infty \}.
\end{equation*}
This is a Banach space when endowed with 
\begin{equation*}
\| (z_k) \|_\nu := \sup \| z_k\| e^{\nu k}.
\end{equation*}
On $\mathscr{H}_\nu$, we can define
\begin{equation*}
\Psi(z_k) := (0, \varphi_t(z_0) - z_1, \varphi_t(z_1) - z_2, \dots).
\end{equation*}
This is a $\mathscr{C}^\infty$ function on $\mathscr{H}_\nu$. We want to solve $\Psi = 0$ in $\mathscr{H}_\nu$. As the stable manifold $\xi$ should be a graph over $E^s(\xi)$, for $(z_k)\in \mathscr{H}_\nu$, we decompose $(z_k) = (z_0^s, r)$. We need to show that $\partial_r \Psi$ is injective and surjective on a closed subspace, to use the implicit function theorem for Banach spaces. Let $V = (v_0^{u0}, v_1, \dots)\in \mathscr{H}_\nu$ where $\mathbf{u0}$ refers to the weak unstable direction $E^u \oplus \R \mathbf{X}$. We have
\begin{equation*}
\partial_r \Psi(0) V = (0, d_\xi \varphi_t \cdot v_0^{u0} - v_1, d_{\xi_1} \varphi_t \cdot v_1 - v_2, \dots).
\end{equation*}

First, we prove this is injective. Assume $\partial_r \Psi(0) V =0$. Then, we have $d_\xi \varphi_t \cdot v_0^{u0} = v_1$. Since the weak unstable direction is stable by the flow, $v_1 \in E^{u0}(\xi_1)$. By induction, $v_k \in E^{u0}(\xi_k)$. However, $V$ has to be in $\mathscr{H}_\nu$, so that there is a constant $C>0$ such that for all $k>0$,
\begin{equation*}
\| v_0^{u0}\| = \|(d_\xi \varphi_{kt})^{-1} v_k \| \leq C e^{-\nu k}.
\end{equation*}
This implies that $v_0^{u0}=0$, and $V=0$.

Now, we prove that $\partial_r\Psi$ is surjective on the space of sequences whose first term vanishes. Let $W=(w_0^{u0}, w_1, \dots) \in \mathscr{H}_\nu$. We decompose each $w_k = (w_k^s, w_k^{u0})$, and we try to solve $\partial_r \Psi. V = W$, with $v_i = (v_i^s, v_i^{u0})$. If $V$ is a solution, then, for all $k>0$,
\begin{equation*}
v_k = d\varphi_{kt} v_0^{u0} - \sum_{l= 0}^{k-1} d\varphi_{lt} w_{k-l}.
\end{equation*}
That is why we let
\begin{equation*}
v_0^{u0} = \sum_{l=1}^\infty (d\varphi_{lt})^{-1} w_{l}^{u0}.
\end{equation*}
This sum converges because $\| (d \varphi_{kt})^{-1}|_{E^{u0}} \|$ is bounded independently from $k$, and we assumed $w$ is in $\mathscr{H}_\nu$. Then, we have to check that the equations
\begin{equation*}
v_k = -\sum_{l=1}^{k} d\varphi_{(k-l)t} w^s_{l} + \sum_{l=1}^\infty (d\varphi_{lt})^{-1} w_{k+l}^{u0}
\end{equation*}
define a sequence in $\mathscr{H}_\nu$. By the Anosov property, there are constants $\lambda>0$ and $C>0$ not depending on $w$, nor on $\xi$ such that $\| d\varphi_{kt} w^s_l \| \leq e^{-\lambda kt} \| w^s_l\|$. So 
\begin{equation*}
\| v_k \| \leq \sum_{l=1}^k C e^{-\lambda (k-l)t} e^{-l \nu} \|w\|_{\nu} + \sum_{l=1}^\infty e^{-(k+l)\nu} \| w\|_\nu.
\end{equation*}
It suffices to choose $\nu < \lambda t$, and we find that $v \in \mathscr{H}_\nu$.

By the Implicit Function Theorem, in a small enough neighbourhood of $\xi$, the strong stable manifold of $\xi$ is a graph of a $\mathscr{C}^\infty$ function from $E^s(\xi)\to E^{u0}(\xi)$. Additionally, the derivatives of this function are controlled by the $\mathscr{C}^k$ norms of $\Psi$. These norms are the $\mathscr{C}^k$ norms of $\varphi_t$. They can be bounded independently from $\xi$ --- recall $\Psi$ depends on $\xi$ --- according to Lemma B.1 and Proposition C.1 of \cite{Bonthonneau-2}. Hence, the stable manifolds are uniformly smooth in the manifold.

Moreover, the tangent space of $W^s(\xi)$ at $\xi$ has to be $E^s(\xi)$, according to the dynamical definition of $W^s(\xi)$. We deduce that the regularity of the lamination formed by the collection of $W^s(\xi)$, $\xi \in S^\ast M$ has the regularity of the \emph{splitting} $E^s \oplus E^u$. Using the description as Green's fiber bundles for $E^s$ and $E^u$, one can prove that they are H\"older, and the lamination is actually a foliation.

The case of unstable manifolds is similar.
\end{proof}

\section{Estimating the regularity of solutions for transport equations}

\begin{lemma}\label{lemma:Regularity_Jacobian}
Let $N$ be a riemannian manifold such that all the derivatives of its curvature are bounded. Let $G$ be a $C^{\infty}$ function on $N$, such that $\| \nabla G\| = 1$, and $\| \nabla G \|_{\mathscr{C}^n(N)}$ is bounded for all $n$. Let $\varphi^G_t$ be the flow generated by $V= \nabla G$. Assume that $\varphi^G$ is expanding, that is, there is a $\lambda>0$ such that if $\mathbf{u} \perp V$, $\| d \varphi^G_t . u \| \geq C e^{\lambda t}\| u \|$ for $t>0$. 

Let $g_0$ be a $C^\infty$ function on $N$, supported in $G \geq \ell$. Let
\begin{equation*}
g_1(x) = \int_{-\infty}^0 g_0 \circ \varphi^G_t dt.
\end{equation*}
Then if $\mathscr{L}(\tau)= \sup\{ |g_0(x)|, G(x)=\tau \}$, for all $n$ there is a constant $C_n>0$ only depending on $G$ such that
\begin{equation*}
|g_1(x)| \leq \int_{\ell}^{G(x)} \mathscr{L}(\tau) d\tau, \quad \| \nabla g_1 \|_{\mathscr{C}^{n-1}(G\leq t)} \leq C_n \|  g_0 \|_{\mathscr{C}^{n}(G \leq t)}
\end{equation*}
\end{lemma}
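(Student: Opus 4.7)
The plan is to treat the pointwise bound and the derivative bound separately, the latter by induction on $n$.

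First I would establish the pointwise bound via change of variables. Since $\|\nabla G\| = 1$, the function $t \mapsto G(\varphi^G_t(x))$ has derivative $\|V\|^2 = 1$, hence equals $G(x)+t$. Setting $\tau = G(x)+t$ converts the defining integral into
\[
g_1(x) = \int_{-\infty}^{G(x)} g_0\bigl(\varphi^G_{\tau - G(x)}(x)\bigr)\,d\tau,
\]
and since the integrand lies on the level set $\{G = \tau\}$ and $g_0$ vanishes for $G < \ell$, the claimed bound by $\int_\ell^{G(x)} \mathscr{L}(\tau)\,d\tau$ follows immediately.

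The derivative estimate rests on two facts. The first is the transport identity $V(g_1) = g_0$, obtained by differentiating along the flow: since $V$ is the infinitesimal generator,
\[
V(g_1)(x) = \frac{d}{ds}\Big|_{s=0}\int_{-\infty}^{s} g_0(\varphi^G_u(x))\,du = g_0(x),
\]
the boundary term at $-\infty$ vanishing by the support hypothesis on $g_0$. This controls the $V$-component of $\nabla g_1$ by $g_0$ pointwise. The second fact is that for a unit tangent vector $u(x) \perp V(x)$, the expanding hypothesis (rephrased for $t \le 0$ as $\|d\varphi^G_t(x)\cdot u\| \le C e^{\lambda t}$) lets me differentiate the defining integral under the sign and estimate
\[
|u(g_1)(x)| \le \int_{-\infty}^0 \|\nabla g_0\|_\infty\, C e^{\lambda t}\,dt \le \frac{C}{\lambda}\|\nabla g_0\|_\infty.
\]
Together these yield the case $n=1$ with constant depending only on $\lambda$ and on the $\mathscr{C}^1$-norm of $\nabla G$.

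For the inductive step, I would differentiate the integral representation $n-1$ further times by Fa\`a di Bruno. This produces finitely many terms of the form $\int_{-\infty}^0 (\nabla^k g_0)(\varphi^G_t(x))\cdot P_t(x)\,dt$ with $k \le n$, where $P_t$ is a polynomial in the iterated differentials $d^j \varphi^G_t$ and in the chosen vector fields. The differentials $d^j \varphi^G_t$ satisfy linear variational ODEs with coefficients bounded by $\|\nabla G\|_{\mathscr{C}^j}$ (a hypothesis of the lemma), and a straightforward induction transfers the transverse contraction rate $\lambda$ from $j=1$ to all $j$. The main obstacle is then combinatorial: one must check that each term produced either (i) carries at least one transverse contraction factor $d\varphi^G_t \cdot X^\perp$ and is therefore absolutely integrable on $(-\infty,0)$, or (ii) has its offending non-decaying $V$-direction contribution collapse, via the identity $V(g_1)=g_0$, into a purely local expression in $\nabla^{\le n} g_0$. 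Once this bookkeeping is carried out, one obtains $\|\nabla g_1\|_{\mathscr{C}^{n-1}(G \le t)} \le C_n \|g_0\|_{\mathscr{C}^n(G \le t)}$ with $C_n$ depending only on $\lambda$ and on the $\mathscr{C}^n$-norm of $\nabla G$, as required.
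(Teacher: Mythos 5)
Your plan follows the same route as the paper: split $T_xN = \R V \oplus V^\perp$, use the transport identity $V(g_1) = g_0$ to turn flow-direction derivatives of $g_1$ into derivatives of $g_0$, and use the contraction of $d\varphi^G_{-t}$ on $V^\perp$ to get absolute convergence of the differentiated integrals. Your pointwise bound and your $n=1$ case are correct as written.

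The inductive step, however, is not yet a proof. You identify "the main obstacle" as combinatorial and then assert that "once this bookkeeping is carried out" the estimate follows; but that bookkeeping \emph{is} the content of the lemma, and the step where you gloss it over is precisely the one that needs care. In particular, the claim that "a straightforward induction transfers the transverse contraction rate $\lambda$ from $j=1$ to all $j$" is not straightforward. Writing $\psi_t = \varphi^G_{-t}$, the $j$-th variational equation for $d^j\psi_t$ is inhomogeneous, with source built from lower jets, and its resolvent cocycle $\Phi_{t,s}$ is only \emph{bounded} (not contracting) in the $V$-direction, so a naive Duhamel estimate for $d^2\psi_t(X_1,X_2)$ yields $\mathcal{O}(1)$ rather than decay. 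To recover decay one must exploit that $G\circ\varphi^G_t = G + t$, hence $dG\circ d\varphi^G_t = dG$; this forces the $V$-component of each higher jet to be a contraction of lower transverse jets (e.g.\ $\langle d^2\psi_t(X_1,X_2),V\rangle = -\langle d\psi_t X_1,\nabla_{d\psi_t X_2}V\rangle = \mathcal{O}(e^{-2\lambda t})$), while the transverse component does contract under $\Phi_{t,s}$. The paper sidesteps all of this by invoking Lemma B.2 of \cite{Bonthonneau-2}, which reorganizes the chain rule into the form $\nabla_{X_1,\dots,X_n}(g_0\circ\varphi^G_{-t}) = W^n_t g_0\bigl((\varphi^G_t)^\ast X_1,\dots,(\varphi^G_t)^\ast X_n\bigr)$ with $W^n_t g_0$ uniformly bounded by $\|g_0\|_{\mathscr{C}^n}$, so that all $n$ contraction factors appear explicitly on the pulled-back inputs and one simply integrates in $t$. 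Your Fa\`a-di-Bruno version can be made rigorous by the structural observation above, but as it stands it defers exactly the nontrivial part of the argument.
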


\begin{proof}
The first part of the statement is obvious. We concentrate on the second part. The basic idea is that when differentiating in the direction of the flow, one obtains $g_0$, and when differentiating in other directions, one can use the contracting properties of $\varphi^G_t$ in negative time. Let $x\in N$, and $X_1, \dots, X_n$ vectors at $x$. We want to evaluate $\nabla_{X_1, \dots, X_n} g_1(x)$. We can decompose the $X_i$'s according to
\begin{equation*}
T_x N = \R V \oplus V^\perp.
\end{equation*}
By linearity, we can assume that either $X_i \propto V$ or $X_i \perp V$. Additionally, we assume $\| X_i \| = 1$. By taking symmetric parts, and antisymmetric parts of $\nabla$, we see that it suffices to evaluate $\nabla_{X_1, \dots, X_n}g_1$ when the $X_i$'s colinear to $V$ are the last in the list. That corresponds to differentiating $g_1$ first along $V$. Now, there are two cases. First, assume that one of the $X_i$'s is colinear to $V$. Then
\begin{equation*}
\nabla_{X_1, \dots, V} g_1 = \nabla_{X_1, \dots, X_{n-1}} g_0
\end{equation*}

We are left to consider the case when all the $X_i$'s are orthogonal to $V$. For this, we use the proof from \cite[appendix B]{Bonthonneau-2}. From therein, we know that for $t>0$,
\begin{equation*}
\nabla_{X_1, \dots, X_n} (g_0 \circ \varphi^G_{-t})  = W_t^n g_0((\varphi^G_t)^\ast X_1, \dots, (\varphi^G_t)^\ast X_n)
\end{equation*}
Where --- lemma B.2 --- $W_t^n g_0$ is a sum of tensors of the form
\begin{equation*}
\nabla^k_{T_1(0,t), \dots, T_k(0,t)} g_0.
\end{equation*}
The $T_i(s,t)$'s are tensors with a particular structure. Either they are of order $1$ and $T_i(s,t)(X) = X$, or they are of higher order and
\begin{equation*}
T_i(s,t)= \int_s^t (\varphi^G_u)^\ast R_i \left[(\varphi^G_u)_\ast T_{i,1}(u,t), \dots, (\varphi^G_u)_\ast T_{i,k_i}(u,t)\right]du
\end{equation*}
where $R_i$ is a bounded tensor with all derivatives bounded, and the $T_{i,j}$ have the same structure. Observe that for $X \in TN$, $\| (\varphi^G_t)^\ast X \| \leq C \| X \|$ when $t>0$, and $\| (\varphi^G_t)^\ast X \| \leq C e^{-\lambda t} \| X \|$ when $X \perp V$. By induction, we deduce that for $t>0$, 
\begin{equation*}
\| \nabla_{X_1, \dots, X_n} (g_0 \circ \varphi^G_{-t})\|(x) \leq C_n e^{-\lambda n t} \| g_0 \|_{\mathscr{C}^n(G \leq G(x))}.
\end{equation*}
We just have to integrate this for $t\in [0, +\infty[$, and the exponential decay ensures the convergence.
\end{proof}

\bibliographystyle{alpha}
\def\cprime{$'$} \def\dbar{\leavevmode\hbox to 0pt{\hskip.2ex \accent"16\hss}d}
  \def\cprime{$'$}

\end{document}